\newcommand{\nc}{\newcommand}
\newcommand{\al}{\alpha}
\newcommand{\bt}{\beta}
\nc{\vf}{\varphi}
\newcommand{\om}{\omega}
\newcommand{\Om}{\Omega}
\nc{\bC}{\mathbf{C}}
\newcommand{\bE}{{\bf E}}
\newcommand{\Ab}{\operatorname{{\mathbb{A}b}}}
\nc{\AB}{\operatorname{{\mathbb A}b}}
\newcommand{\bH}{\mathbb{H}}
\nc{\bg}{{\mathbb G}}
\nc{\bl}{{\mathbb L}}
\newcommand{\pa}{\partial}
\newcommand{\lra}{\longrightarrow}
\newcommand{\bB}{\mathbb{B}}
\newcommand{\bJ}{\mathbb{J}}
\newcommand{\aaa}{{\bf{\mathfrak{a}}}}
\newcommand{\bbb}{{\bf b}}
\newcommand{\K}{\mathbb{K}}
\nc{\cf}{{\mathcal F}}
\nc{\cF}{{\mathcal F}}
\renewcommand{\Bbb}{\mathbbm} 
\nc{\GR}{\operatorname{{\mathbb G}r}}
\nc{\BL}{{\Bbb L}}
\nc{\BZ}{{\Bbb Z}}
\nc{\grc}{\GR^c}
\nc{\grb}{\GR^\bullet}
\nc{\all}{\allowdisplaybreaks}
\nc{\ogrc}{\ol{\GR}}
\newcommand{\cB}{\mathfrak{B}}
\newcommand{\ol}{\overline}
\nc{\tm}{\times}
\nc{\sps}{\supset}
\nc{\bsl}{\backslash}
\newcommand{\tp}{\otimes}
\newcommand{\Ext}{\operatorname{Ext}}
\newcommand{\Der}{\operatorname{Der}}
\newcommand{\Ker}{\operatorname{Ker}}
\newcommand{\Hom}{\operatorname{Hom}}
\nc{\Imm}{\operatorname{Im}}
\nc{\gr}{\operatorname{{\mathbb{G}r}}} 
\newcommand{\cone}{\operatorname{cone}}
\newcommand{\NP}{\operatorname{NP}}
\newcommand{\NLP}{\operatorname{NLP}}
\newcommand{\inc}{\operatorname{inc}}
\newcommand{\Poiss}{\operatorname{Poiss}}
\newcommand{\AWB}{\operatorname{AWB}}
\newcommand{\USGA}{\operatorname{USGA}}
\newcommand{\Actor}{\operatorname{Actor}}
\nc{\Aut}{\operatorname{Aut}}
\nc{\ab}{\operatorname{{\mathbb{A}b}}} 
\newcommand{\ad}{\operatorname{ad}}
\newcommand{\cdim}{\operatorname{c.dim}}
\nc{\leib}{{\mathbbm{L}\mathrm{eibniz}}}
\nc{\lie}{{\mathbbm{L}\mathrm{ie}}} 
\nc{\grd}{\gr^\bullet} 
\nc{\abd}{\Ab^\bullet}
\nc{\grr}{\gr^{[\;]}}
\nc{\ggr}{G^{[\;]}}
\numberwithin{equation}{section}
\newtheorem{theorem}{Theorem}[section]
\newtheorem{lemma}[theorem]{Lemma}
\newtheorem{proposition}[theorem]{Proposition}
\newtheorem{corollary}[theorem]{Corollary}
\theoremstyle{definition}
\newtheorem{definition}[theorem]{Definition}
\newtheorem{example}[theorem]{Example}
\title[Left-right Noncommutative Poisson algebras] {Left-right Noncommutative Poisson algebras}
\author[J.M. Casas]{Jos\'e~M. Casas}
\address{Department of Applied Mathematics I, University of Vigo, Pontevedra, 36005,  Spain}
\author[T. Datuashvili]{Tamar Datuashvili}
\address{Andrea Razmadze Mathematical Institute at the Ivane Javakhishvili Tbilisi
 State University, University str.2, 0143 Tbilisi, Georgia}
\author[M. Ladra]{Manuel Ladra}
\address{Department of Algebra,  University of Santiago de Compostela, 15782
Santiago de Compostela, Spain}
\email{jmcasas@uvigo.es, tamar@rmi.ge, manuel.ladra@usc.es}
\begin{document}

\begin{abstract}
The notions of left-right noncommutative Poisson algebra
($\NP^{lr}$-algebra) and left-right algebra with bracket
$\AWB^{lr}$ are introduced. These algebras are special cases of $\NLP$-algebras and algebras with bracket $\AWB$,
 respectively, studied earlier. An $\NP^{lr}$-algebra is a noncommutative
analogue of the classical Poisson algebra. Properties of the new
algebras are studied. The constructions of free objects in the
corresponding categories are given. The relations between the
properties of $\NP^{lr}$-algebras, the underlying $\AWB^{lr}$,
associative and Leibniz algebras are investigated.  In the
categories $\AWB^{lr}$ and $\NP^{lr}$-algebras the notions of actions,
representations, centers, actors and crossed modules are described as
special cases of the corresponding well-known notions in categories
of groups with operations. The cohomologies of $\NP^{lr}$-algebras
and $\AWB^{lr}$ (resp. of $\NP^r$-algebras and $\AWB^r$) are defined
and the relations between them and the Hochschild, Quillen and
Leibniz cohomologies are detected. The cases $P$ is a free
$\NP^{r}$ or $\NP^{l}$-algebra, the Hochschild  or/and Leibniz
cohomological dimension of $P$ is $\leq n$ are considered separately,
exhibiting interesting possibilities of representations of the new cohomologies
by the well-known ones and relations between the corresponding cohomological dimensions.
\end{abstract}

\keywords{Poisson algebra; algebras with bracket; Leibniz algebra; representation; left-right noncommutative Poisson algebra cohomology;
 Hochschild, Quillen, Leibniz cohomologies; cohomological dimension; extension; action; universal strict general actor; center}
\subjclass[2010]{17A32, 17B63, 17B56, 18G60}

\maketitle

\section{Introduction}

   In \cite{CDNo} are defined and studied noncommutative Leibniz-Poisson algebras, denoted as $\NLP$-algebras. These are associative algebras $P$,
   generally noncommutative, over a ring $\K$ with unit, with bracket operation, according to
which they are Leibniz algebras over $\K$ and the Poisson identity
holds
\begin{equation}\label{cd1}
[a \cdot b,c]=a \cdot [b,c]+[a,c]\cdot b
\end{equation}
for all $a,b,c \in P$. In this paper this identity will be called
left Poisson identity and the above defined algebra left
noncommutative Poisson algebra, shortly left $\NP$-algebra or
$\NP^l$-algebra. It is natural to consider right $\NP$-algebras over
a ring $\K$ ($\NP^r$ in what follows), which are defined in
analogous way satisfying the right Poisson identity
\begin{equation}\label{cd2}
[a, b\cdot c]=b\cdot [a,c]+[a,b]\cdot c
\end{equation}
for all $a,b,c \in P$.

A left-right $\NP$-algebra ($\NP^{lr}$) over a ring $\K$ is an
algebra, which is an associative and Leibniz algebra
 and satisfies both \eqref{cd1} and \eqref{cd2} identities; it is a noncommutative analogue
  of the classical Poisson algebra. In the same way, an algebra with bracket $\AWB$ defined
   in \cite{CP}, see  below Definition \ref{D:AWB}, is a left $\AWB$, which will be denoted by $\AWB^l$.  Obviously, we can
    define in analogous ways $\AWB^r$ and $\AWB^{lr}$ as well. Thus we obtain the following commutative diagram
     of the corresponding categories and inclusion functors
\begin{equation} \label{diag}
\xymatrix{
 \mathbf{AWB}^r  & \ \mathbf{AWB}^{lr} \ar@{_{(}->}[l]  \ar@{^{(}->}[r] \ & \mathbf{AWB}^l    \\
  \mathbf{NP}^r \ar@{^{(}->}[u] & \ \ar@{_{(}->}[l]  \mathbf{NP}^{lr}  \ar@{^{(}->}[u] \ar@{^{(}->}[r] \ & \mathbf{NP}^l . \ar@{^{(}->}[u]}
\end{equation}

 The purpose of this paper is to study properties of the above defined algebras, including the construction
 of appropriate complexes for the definition of cohomology, to investigate
 and to establish relations between them and with the properties of the underlying associative and Leibniz algebras and
 the corresponding Hochschild \cite{HoCo}, Quillen \cite{Quillen} and Leibniz cohomologies \cite{LP}. We will see that left-right $\NP$-algebras
 do not inherit all the properties of left or right $\NP$-algebras. But nevertheless they have interesting intersections and relations with each
 other due to the specific way of construction of cohomology complexes.
  We will often omit the proofs, which are analogous to those given in the cases of $\AWB^l$ and $\NP^l$-algebras
 in \cite{CP} and \cite{CDNo}, respectively.

 In Section \ref{S:2} we present the definitions of new algebras considered in
 the paper and examples. For convenience of the reader we include the definition of
category of interest and some examples as well. In Section \ref{S:3} we
construct free $\NP^{lr}$ and $\NP^{r}$-algebras (resp. $\AWB^{lr}$
and $\AWB^r$). The constructions of free $\NP^l$-algebras and
$\AWB^l$ were given in \cite{CDNo} and \cite{CP}, respectively. The
properties of free objects are investigated, in particular, it is
proved that if $P$ is a free $\NP^l$-algebra, then the underlying
associative and Leibniz algebras of $P$ are free as well
(cf. \cite{CDNo}). In Section \ref{S:4} we describe action conditions, we
present definitions of derivation, extension, crossed module and
representation in the categories of the new defined algebras. All these
are special cases of the well-known definitions in categories of
groups with operations. It turned out that the category of
$\NP^{lr}$-algebras is a category of interest, from which, applying
the general result of \cite{Mo}, we conclude that this category is
action accessible in the sense of \cite{BJ}. We construct the
universal strict general actor $\USGA(A)$ of an $\NP^{lr}$-algebra
$A$, defined in \cite{CDLAc} in a category of interest; we describe
center and define actor of $\NP^{lr}$-algebras and, as a special
case of the result in \cite{CDLAc}, we obtain the necessary and
sufficient conditions for the existence of an actor of $A$ in terms
of $\USGA(A)$. We plan to consider the problem of the existence of
an actor in $\mathbf{NP}^{lr}$, or to find individual objects in
this category with actor. According to \cite{BJK} this problem in
categories of interest is equivalent to the amalgamation property
for protosplit monomorphisms. Here in $\mathbf{NP}^{lr}$ we
determine the full subcategory of commutative von Neumann
regular rings with trivial bracket operations; by the result of
\cite{BJK} we have that in this category always exists an actor for
any algebra, and moreover, on the base of the result of the same
paper and \cite {Cor} we  conclude that in $\mathbf{NP}^{lr}$ there
exists a subcategory which satisfies the amalgamation property.
This result can be applied to the characterization of effective
codescent morphisms in this subcategory. In Section \ref{S:5} we
construct complexes and define the corresponding cohomologies
$H_{\NP^{lr}}^n(P,M), H_{\AWB^{lr}}^n(P,M)$, where
$P\in \mathbf{NP}^{lr}$ ($P\in \mathbf{AWB}^{lr}$, respectively),
and $M$ denotes the corresponding representations of $P$.  In what
follows under $\NP$-algebras we will mean $\NP^r$, $\NP^l$ and
$\NP^{lr}$-algebras, and under $\AWB$ we will mean $\AWB^l$,
$\AWB^r$ and $\AWB^{lr}$. We investigate
the relation of the second cohomology with extensions. Like in the
case of $\AWB^l$ \cite {CP}, we obtain the isomorphism
$H_{\AWB^r}^{n+1}(P,M) \approx H_{\mathrm Q}^n(P,M)$ with the
Quillen cohomology. From the
constructions of the cohomology complexes we detect short exact
sequences, from which  follow long exact sequences involving
cohomologies, relating $\NP, \AWB$, Hochschild and Leibniz
cohomologies with each other. The special cases, where $P$ is a free
$\NP^{l}$ or $\NP^{r}$-algebra, the Leibniz cohomological
dimension or/and the Hochschild cohomological dimension of $P$ is
 $n \: / \leq n$ give interesting results, in particular, in these cases we
can represent the new cohomologies by the well-known ones and
estimate cohomological dimensions of the corresponding $\AWB$ and
$\NP$-algebras. Note that an operadic approach to similar kind of
investigations would be interesting, see e. g. \cite{Dot, Fresse1, Hoff, LV}. The cohomology of
classical Poisson algebras is defined and studied by J. Huebschmann
\cite{HuPo}. Different types of noncommutative Poisson algebras were
studied in \cite{Kubo1,Kubo2, Ping, Tong}.

 \section{Preliminary definitions and examples}\label{S:2}

Let $\K$ be a commutative ring with unit. We recall that a {\it
Leibniz algebra} \cite{LoCH,Lo} $A$ over $\K$ is a $\K$-module $A$
equipped with a $\K$-module homomorphism $[-,-] \colon A \otimes A \to A$,
called a square bracket, satisfying the Leibniz identity
\begin{equation}\label{cd3}
[a,[b,c]]=[[a,b],c]-[[a,c],b],
\end{equation}
for all $a,b,c \in A$. Here and in what follows $\tp$ means
$\tp_\K$ .
\begin{definition}  \label{D:AWB} \
\begin{enumerate}
  \item[(i)] A left (resp. right) algebra with bracket over $\K$, for short, $\AWB^l$ (resp. $\AWB^r$), is an associative algebra $A$ equipped with a $\K$-module
homomorphism $[-,-] \colon A \otimes A \to A$, such that \eqref{cd1} (resp.\eqref{cd2}) identity holds.
  \item[(ii)] A left-right algebra with bracket over $\K$ (for short, $\AWB^{lr}$) is an associative algebra $A$ equipped with a $\K$-module
homomorphism $[-,-] \colon A \otimes A \to A$, such that \eqref{cd1} and  \eqref{cd2} identities hold.
\end{enumerate}
\end{definition}

As we have noted in the introduction $\AWB^l$ is the same as algebra
with bracket $\AWB$ defined in \cite{CP} and $\NP^l$-algebra  is
$\NLP$-algebra defined in \cite{CDNo}. Morphisms between the above
defined algebras are $\K$-module homomorphisms preserving the dot
and bracket operations. The corresponding categories will be denoted
by $\mathbf{NP}^l$, $\mathbf{NP}^r$, $\mathbf{NP}^{lr}$,
$\mathbf{AWB}^l$, $\mathbf{AWB}^r$ and $\mathbf{AWB}^{lr}$. The sign ``$\cdot$'' of the dot operation
will be often omitted, when it is clear from the context, which
operation is meant between the elements, e.g. $a \cdot b$ will be
written as $ab$.
\begin{example} \
\begin{enumerate}
  \item[(1)]  Every Poisson algebra is an $\NP^{lr}$-algebra.
  \item[(2)] Any Leibniz algebra $A$ is an $\NP^{lr}$-algebra with trivial
dot operation, i.e. $ab=0, a,b\in A$.
  \item[(3)] Any associative algebra $A$ is an $\NP^{lr}$-algebra with the usual bracket $[a,b]=ab-ba,a,b\in A$.
  \item[(4)]  Let $A$ be an associative algebra and let $D \colon A\rightarrow A$ be a square zero derivation, i.e.  $D^2=0$ and $D(ab)=(Da)b+a(Db)$. Define the bracket operation by $[a,b]=a(Db)-(Db)a$. It is easy to check that with this bracket operation $A$ is an $\NP^{l}$-algebra, but not $\NP^{r}$-algebra.
  \item[(5)] Let $A$ be an associative algebra of the case (4), where the bracket operation is defined by $[a,b]=(Da)b-b(Da)$. Then $A$ is  $\NP^{r}$-algebra, but not $\NP^{l}$-algebra.
  \item[(6)] Let $A$ be an associative algebra with the property that $abc=bac=acb$, for any $a,b$ and $c\in A$, and let $D \colon A\rightarrow A$ be a square zero derivation. Then $A$ is an $\NP^{lr}$-algebra  with respect to the rule  $[a,b]=a(Db)-(Db)a$.
  \item[(7)] Every $\NP$-algebra is an $\AWB$.
  \item[(8)]  The following algebra is an $\AWB^{r}$ (resp. $\AWB^{l}$), but not an $\NP^{r}$-algebra (resp. $\NP^{l}$-algebra). Let $A$ be an associative algebra with a linear application $D \colon A\rightarrow A$. Then $A$ is an $\AWB^{r}$ (resp. $\AWB^{l}$) where the bracket operation is defined by $[a,b]=(Da)b-b(Da)$ \big(resp. by $[a,b]=a(Db)-(Db)a$\big); for the left $\AWB$ this example was given in \cite{CP}.
  \item[(9)]  Let $A$ be an associative algebra with a linear application $D \colon A\rightarrow A$, satisfying the condition $(Da)b-b(Da)= a(Db)-(Db)a$, for any $a,b \in A$. Then the algebra defined in the case (8) is an $\AWB^{lr}$.
   \item[(10)] If the linear application $D \colon A\rightarrow A$ in  the case (9) is a square zero derivation like in case (4), then the algebra with respect to the square bracket  $[a,b]=(Da)b-b(Da)$ is an $\NP^{lr}$-algebra.
  \item[(11)]  Any associative dialgebra \cite{Lo4}
with respect to the operations $ab=a\vdash b$, $[a,b] = a \vdash
b - b \dashv a$ (resp. $[a,b] = a \dashv
b - b \vdash a$) is an $\AWB^{r}$ (resp. $\AWB^{l}$), but not an $\AWB^{l}$ (resp. $\AWB^{r}$).
  \item[(12)] The algebras defined in the case (11) are not generally $\NP^{r}$ and  $\NP^{l}$-algebras, respectively.
  The greatest quotient of these algebras by the congruence relation generated by the relation $[a,[b,c]]\sim[[a,b],c]-[[a,c],b]$, for any $a,b$ and $c\in A$, give examples of $\NP^{r}$ and $\NP^{l}$-algebras, respectively. For $\NP^{l}$-algebras this  example was given in \cite{CDNo}.
  \item[(13)] The algebra defined in the case (11), under the additional condition $a \vdash
b - b \dashv a = a \dashv b - b \vdash a$, for any $a,b \in A$, is an $\NP^{lr}$-algebra.
  \item[(14)] For an example of a graded version of $\NP^{l}$-algebra coming from Physics see \cite{Ka}.
  \item[(15)] See Section \ref{S:3} for the constructions of free $\AWB$ and $\NP$-algebras.
\end{enumerate}
\end{example}
\begin{definition}
Let $P \in \mathbf{NP}^{lr}$. 
A subalgebra of  $P$ is an associative and Leibniz subalgebra of
$P$. A subalgebra $R$ of $P$ is called a two-sided ideal if $a \cdot
r, r\cdot a, [a,r], [r,a] \in R$, for all $a \in P, r \in R$.
\end{definition}
The inclusion functor $\inc \colon \mathbf{Poiss} \to \mathbf{NP}$
from the category of Poisson algebras to the category of
$\NP$-algebras, i.e. left, right or left-right noncommutative
Poisson algebras, respectively,  has a left adjoint $(-)_{\Poiss}
\colon \mathbf{NP} \to \mathbf{Poiss}$. This functor assigns to an
$\NP$-algebra $P$ the quotient algebra of $P$ with the smallest
two-sided ideal spanned by the elements $[x,x]$ and $xy-yx$, for all
$x, y \in P$.

Consider the elements $[a,[b,c\cdot d]], [a,[b\cdot c,d]], [a \cdot b,[c,d]]$ and $[a \cdot b, c\cdot d]$ in the category of $\NP^{lr}$-algebras.
The two different decompositions of the first and the fourth elements give the identities
\begin{equation}
 [a,c]\cdot [b,d]+[a,c]\cdot [d,b]+[b,c]\cdot [a,d]+[c,b]\cdot [a,d]=0
\end{equation}
\begin{equation}
 a \cdot c \cdot [b,d]+[a,c]\cdot d\cdot b=c\cdot a \cdot [b,d]+[a, c] \cdot b \cdot d
\end{equation}

The last identity have place in the category of $\AWB^{lr}$ as well.

The two different decompositions of the second and the third elements do not give identities.

Analogously, considering the two different decompositions of the
first element in the category of $\NP^r$-algebras, and the second
element in the category of $\NP^l$-algebras we obtain, respectively,
the identities
\begin{equation}
 [[a,c]\cdot d, b] = [[a,c],b]\cdot d-[a,c]\cdot [b,d]-[b,c]\cdot [a,d]+c\cdot [[a,d],b]-[c\cdot [a,d],b]
\end{equation}
\begin{equation}
 [a,b\cdot [c,d]] + [a,[b,d]\cdot c]=[[a,b\cdot c],d] - [[a,d],b\cdot c].
\end{equation}

In the categories $\AWB^{lr}$ and $\NP^{lr}$-algebras we have the following identity as well
\begin{equation}
 [a \cdot b,c] - [a,c\cdot b]+[b\cdot c,a] - [b,a \cdot c]+[c\cdot a,b]-[c,b\cdot a]=0.
\end{equation}

By decomposition of the right side of (2.4) we obtain the identity
\begin{equation}
\begin{array}{lcl}
 [[a,c]\cdot d, b] &=& -  [b,[a,c]\cdot d]+[[b,a],c]\cdot d-[[b,c],a]\cdot d-[a,[b,c]\cdot d]+\\
 & & + [[a,b],c]\cdot d +[[a,d],c\cdot b]-[[a,d], c]\cdot b-[c\cdot [a,d],b].
 \end{array}
\end{equation}

These identities will be applied in the next Section. The case of $\NP^l$-algebras was considered in \cite{CDNo}.

Recall that  an action (a derived action in the sense of \cite{Orzech}) of $P$ on $M$ for associative
algebras is given by two $\K$-module homomorphisms $-\cdot - \colon P
\otimes M \to M$, $-\cdot - \colon  M \otimes P \to M$ with the conditions
\begin{align*}
p\cdot(m_1\cdot m_2) =& \ (p\cdot m_1)\cdot m_2 \, ; \ & \  m_1\cdot(p\cdot m_2)= & \ (m_1\cdot p)\cdot m_2 \, ; \\
(m_1\cdot m_2)\cdot p = & \   m_1\cdot(m_2\cdot p)\, ; \ & \  p_1\cdot(p_2\cdot m)=  & \ (p_1\cdot p_2)\cdot m  \, ; \\
p_1\cdot (m\cdot p_2) = & \ (p_1\cdot m)\cdot p_2\, ; \ & \  m\cdot (p_1\cdot p_2)= & \ (m\cdot p_1)\cdot p_2 \, .
\end{align*}
An action of $P$ on $M$ for Leibniz algebras is given by two
$\K$-module homomorphisms $[-,-] \colon P \otimes M \to M$,
$[-,-] \colon M \otimes P \to M$ with the conditions
\begin{align*}
[p,[m_1,m_2]]  & =  [[p,m_1],m_2]-[[p,m_2],m_1]; \\
  [m_1,[p,m_2]] & =  [[m_1,p],m_2]-[[m_1,m_2],p]; \\
[m_1,[m_2,p]] & =  [[m_1,m_2],p]-[[m_1,p],m_2]; \\
   [p_1,[p_2,m]] & =  [[p_1,p_2],m]-[[p_1,m],p_2];\\
 [p_1,[m,p_2]]  & = [[p_1,m],p_2]-[[p_1,p_2],m]; \\
    [m,[p_1,p_2]] & =  [[m,p_1],p_2]-[[m,p_2],p_1].
\end{align*}

Here we recall the definition of category of interest.
Let $\bC$ be a category of groups with a set of operations $\Om$ and
with a set of identities $\bE$, such that $\bE$ includes the group
identities and the following conditions hold. If $\Om_i$ is the set
of $i$-ary operations in $\Om$, then:
\begin{itemize}
\item[(a)] $\Om=\Om_0\cup\Om_1\cup\Om_2$;
\item[(b)] the group operations
\big(written additively: ($0,-,+$)\big) are elements of $\Om_0$, $\Om_1$ and
$\Om_2$ respectively. Let $\Om_2'=\Om_2\setminus\{+\}$,
$\Om_1'=\Om_1\setminus\{-\}$ and assume that if $*\in\Om_2$, then
$\Om_2'$ contains $*^\circ$ defined by $x*^{\circ}y=y*x$. Assume
further that $\Om_0=\{0\}$;
\item[(c)] for each $*\in\Om_2'$, $\bE$
includes the identity $x*(y+z)=x*y+x*z$;
 \item[(d)] for each
$\om\in\Om_1'$ and $*\in\Om_2'$, $\bE$ includes the identities
$\om(x+y)=\om(x)+\om(y)$ and $\om(x)*y=\om(x*y)$.
\end{itemize}
Note that the group operation is denoted additively, but it is not
commutative in general. A category $\bC$ defined above is called a
\emph{category of groups with operations}. The idea of the
definition comes from \cite{Hig} and the axioms are from \cite{Orzech}
and \cite{Porter}. We formulate two more axioms on $\bC$ (Axiom (7)
and Axiom (8) in \cite{Orzech}).

If $C$ is an object of $\bC$ and $x_1,x_2,x_3\in C$:

\begin{itemize}
  \item[] Axiom 1.
 \begin{gather*}
x_1+(x_2*x_3)=(x_2*x_3)+x_1, \,\, \text{for each} \,\, *\in\Om_2'.
\end{gather*}
  \item[] Axiom 2.

  For each ordered pair
$(*,\ol{*})\in\Om_2'\times\Om_2'$ there is a word $W$ such that
\begin{gather*}
(x_1*x_2)\ol{*}x_3=W(x_1(x_2x_3),x_1(x_3x_2),(x_2x_3)x_1,\\
(x_3x_2)x_1,x_2(x_1x_3),x_2(x_3x_1),(x_1x_3)x_2,(x_3x_1)x_2),
\end{gather*}
where each juxtaposition represents an operation in $\Om_2'$.
\end{itemize}

A category of groups with operations satisfying  Axiom 1 and Axiom 2
is called a {\it category of interest} in \cite{Orzech}.

Denote by $\bE_G$ the subset of identities of $\bE$ which includes
the group laws and the identities (c) and (d). We denote by $\bC_G$
the corresponding category of groups with operations. Thus we have
$\bE_G \hookrightarrow \bE$, $\bC=(\Om,\bE)$, $\bC_G=(\Om,\bE_G)$
and there is a full inclusion  functor $\bC\hookrightarrow\bC_G$.
The category $\bC_G$ is called a \emph{general category of groups with
operations} of a category of interest $\bC$ (see \cite{CDL2,CDLAc}).

\begin{example}[Categories of interest] The categories of groups, modules over a ring, vector spaces, associative algebras, associative commutative algebras, Lie algebras, Leibniz algebras are categories of interest. In the example of groups
$\Om_2'=\varnothing$. In the case of associative algebras with
multiplication represented by  $*$, we have $\Om_2'=\{*,*^\circ\}$.
For Lie algebras take $\Om_2'=\{[\;,\;],[\;,\;]^\circ \}$ (where
$[a,b]^\circ=[b,a]=-[a,b]$). For Leibniz algebras, take
$\Om_2'=\{[\;,\;],[\;,\;]^\circ \}$, (here $[a,b]^\circ=[b,a]$). The
category of  alternative algebras is a category of interest as well
\cite{Orzech} (see also \cite{CDL4}). The categories of crossed modules  and
precrossed modules in the category of groups, respectively, are
equivalent to categories of interests (see e.g. \cite{CDLAc,CDL3}).
 According to \cite{BJK} the category of commutative Von
Neumann regular rings is isomorphic to a category of interest. In
\cite{Mo} are given new examples of categories of interest, these
are associative dialgebras and associative trialgebras. Dialgebras
and trialgebras were defined by Loday \cite{Lo3,Lo4,LoRo}. As it is
noted in \cite{Orzech} Jordan algebras do not satisfy Axiom 2. It is
easy to see that $\mathbf{NP}^{lr}$  is a category of interest;
while the categories $\mathbf{AWB}^{lr}$, $\mathbf{AWB}^r$,
$\mathbf{AWB}^l$, $\mathbf{NP}^r$ and $\mathbf{NP}^l$  are not
categories of interest, they do not satisfy Axiom 2 of the
definition.
\end{example}

\section{Free objects in $\mathbf{NP}$ and $\mathbf{AWB}$} \label{S:3}
For any set $X$ we shall build a free $\NP^{lr}$-algebra (resp.
$\NP^r$) over a ring $\K$. The construction for $\NP^{l}$-algebras
is given in \cite{CDNo}. Denote by $W(X)$  \big(resp. $W'(X)$\big)
the set, which contains $X$ and all formal combinations (words) of
two operations $(\cdot,[-,-])$ with the elements from $X$, which
have a sense, and which do not contain elements of the forms
$\big[a,[b,c]\big]$, $[a \cdot b,c]$, $[a,b\cdot c]$ (resp.
$\big[a,[b,c]\big], [a,b\cdot c]$ and $[[a,c]\cdot d,b]$); moreover, if the element
$a \cdot b\cdot [c,d]$ is contained in any word, then the element
$b\cdot a \cdot [c,d]$ is not contained in any word, and if the
element $[a,b]\cdot [c,d]$ is contained in any word, then
$[a,b]\cdot [d,c]$ is not contained in any word, where $a,b,c,d$ are from $X$ or are combinations of
elements of $X$ and dot and bracket operations. Let $W_n(X)$
\big(resp. $W'_n(X)$\big) be the subset of those words of $W(X)$ (resp. $W'(X)$),
which contain $n$ elements of $X$, i.e. the number of both
operations together is $n-1$, $n\geq 1$; we say that this word is of
length $n$. Obviously, $W(X)= \bigcup_{n \geq 1} W_n(X) $ (resp.
$W'(X)= \bigcup_{n \geq 1} W'_n(X) $). We define the following maps
\[\sigma_{n,m}, \tau_{n,m} \colon W_n(X) \times W_m(X) \longrightarrow
W_{n+m}(X) \, . \] $\sigma_{n,m}$ is defined only on those pairs $(a,b)\in W_n(X) \times W_m(X)$, for
which the word $a \cdot b \in W_{n+m}(X)$, and by definition
 $\sigma_{n,m}(a,b) = a \cdot b$, where the
right side denotes the word from $W_{n+m}(X)$, which is defined
uniquely. Analogously, $\tau_{n,m}$ is defined only on those pairs $(a,b)$, for
which the word $[a,b] \in W_{n+m}(X)$, and by definition
$\tau_{n,m}(a,b)=[a,b]$. In the case $a \cdot b, [a,b] \notin W_{n+m}(X)$,
 $\sigma_{n,m}$ and $\tau_{n,m}$  are not defined. $\sigma'_{n,m}$ and $\tau'_{n,m}$ are
defined in analogous ways; it is easy to notice, that $\sigma'_{n,m}$ is defined for any pair $(a,b)\in W'_n(X) \times W'_m(X)$ by $\sigma'_{n,m}(a,b)=a \cdot b$. Let $F\big(W(X)\big)$ \big(resp. $F\big(W'(X)\big)$\big) be the
free $\K$-module generated by the set $W(X)$ \big(resp. $W'(X)$\big). Define
the dot operation on $F\big(W'(X)\big)$ as a linear
extension of $\sigma'_{n,m}$ on whole $F\big(W'(X)\big).$ For those words of  $F\big(W(X)\big)$ (resp. $F\big(W'(X)\big)$) on which
$\sigma_{n,m}$ and $\tau_{n,m}$ are defined (resp. $\tau'_{n,m}$ is defined), we define the dot and the bracket operations (resp. bracket operation) as
the linear extensions on  $F\big(W(X)\big)$ of $\sigma_{n,m}$ and $\tau_{n,m}$, respectively (resp. on $F\big(W'(X)\big)$
of $\tau'_{n,m}$). If the elements $a \cdot b,[a,b] \notin W_{n+m}(X)$, for $ a \in
W_n(X), b \in W_m(X)$ \big(resp. $[a,b] \notin W'_{n+m}(X)$, for $ a \in
W'_n(X), \ b \in W'_m(X)$\big), we decompose $a \cdot b$ and $[a,b]$ (resp. $[a,b]$) according to the
identities \eqref{cd1}, \eqref{cd2}, \eqref{cd3}, (2.2), (2.3) and the $\mathbb{K}$-linearity of the dot and the bracket operations \big(resp. \eqref{cd2},
\eqref{cd3}, (2.4) and the $\mathbb{K}$-linearity of  bracket operation\big), until
 we obtain the sum of the dot and the bracket operations, respectively (resp. the sum of the bracket operations), on such pairs of words on which $\sigma_{n,m}$ and $\tau_{n,m}$  are defined (resp. on which $\tau'_{n,m}$ is defined).
Acting on every step in such  ways  we will obtain the sums $d_1+
\dots+d_l$ and $c_1+
\dots+c_k$, respectively, (resp. $c'_1+
\dots+c'_t$), with  $b_j, c_i \in F\big(W_{n+m}(X)\big)$ (resp. $c'_i\in F\big(W'_{n+m}(X)\big)$) and by definition
$(a,b)=d_1+ \dots+d_l$ and $[a,b]=c_1+ \dots+c_k$ (resp. $[a,b]=c'_1+ \dots+c'_t$).  Any two
different decompositions give the same element of $F\big(W(X)\big)$
\big(resp. $F\big(W'\big(X)\big)$\big) and the results of the
operations are uniquely defined. By construction $F\big(W(X)\big)$
\big(resp. $F\big(W'(X)\big)$\big) has a structure of
$\NP^{lr}$-algebra (resp. $\NP^r$-algebra). Let $i \colon
X\longrightarrow F\big(W(X)\big)$ be the natural injection of sets.
\begin{proposition}
For any $\NP^{lr}$-algebra $B$ and a map ${\varphi} \colon X \to B$, there
exists a unique homomorphism $\bar{\varphi} \colon F\big(W(X)\big)
\to B$ such that the following diagram commutes
\[
\xymatrix{
X \ar[r]^-i \ar[d]_-\vf & F\big(W(X)\big)  \ar[dl]^-{\bar{\varphi}} \\
B
}
\]
\end{proposition}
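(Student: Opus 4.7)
The plan is to build $\bar\vf$ by induction on word length, extend $\K$-linearly, and then verify that it respects the dot and bracket by exploiting the fact that $B$ itself satisfies every identity used in the construction of $F\big(W(X)\big)$.

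First I would define $\bar\vf$ on the set $W(X)$ recursively in the length $n$ of a word. For $n=1$ set $\bar\vf(x)=\vf(x)$ for $x\in X$. For $n\geq 2$, every $w\in W_n(X)$ has a uniquely determined outermost operation writing it as either $a\cdot b$ or $[a,b]$ with $a\in W_k(X)$ and $b\in W_{n-k}(X)$ for some $1\leq k<n$; set
\[
\bar\vf(a\cdot b)=\bar\vf(a)\cdot_{B}\bar\vf(b),\qquad \bar\vf\big([a,b]\big)=[\bar\vf(a),\bar\vf(b)]_{B}.
\]
Since $W(X)$ is a $\K$-basis of $F\big(W(X)\big)$, this extends uniquely to a $\K$-linear map $\bar\vf\colon F\big(W(X)\big)\to B$. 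By construction $\bar\vf\circ i=\vf$.

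Next I would verify that $\bar\vf$ is a morphism of $\NP^{lr}$-algebras. By $\K$-bilinearity of the two operations on both sides it is enough to check, for each pair of reduced words $a\in W_n(X),\ b\in W_m(X)$, that $\bar\vf(a\cdot b)=\bar\vf(a)\cdot_{B}\bar\vf(b)$ and $\bar\vf([a,b])=[\bar\vf(a),\bar\vf(b)]_{B}$. I proceed by induction on $n+m$. If $a\cdot b$ (respectively $[a,b]$) already belongs to $W_{n+m}(X)$ the equality is nothing but the definition of $\bar\vf$. Otherwise the product in $F\big(W(X)\big)$ is defined by rewriting $a\cdot b$ or $[a,b]$ via the identities \eqref{cd1}, \eqref{cd2}, \eqref{cd3}, (2.2) and (2.3) together with $\K$-linearity, until only legal shapes remain; each summand so produced involves arguments of strictly smaller total length, or forms already in reduced shape. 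Applying $\bar\vf$ summand by summand, the induction hypothesis converts the right-hand side into an equal expression in $B$, and because $B$ is an $\NP^{lr}$-algebra it satisfies precisely the same defining identities used in the rewriting; hence the result coincides with $\bar\vf(a)\cdot_B\bar\vf(b)$ (resp.\ $[\bar\vf(a),\bar\vf(b)]_{B}$).

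Uniqueness is immediate: each element of $W(X)$ is built from $X$ by iterated dot and bracket, so $i(X)$ generates $F\big(W(X)\big)$ as an $\NP^{lr}$-algebra, and any $\NP^{lr}$-morphism extending $\vf$ is forced on generators and therefore everywhere. The main obstacle is the middle step: one has to know that the rewriting procedure used to define $\cdot$ and $[-,-]$ on $F\big(W(X)\big)$ produces a well-defined element independent of the order in which identities are applied, a point the preceding construction already guarantees. Given this, the transfer to $B$ is automatic because the very identities that collapse different rewriting paths in $F\big(W(X)\big)$ are the $\NP^{lr}$-identities that hold in $B$, so no ambiguity can obstruct $\bar\vf$ from being a homomorphism.
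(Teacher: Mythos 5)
Your argument is correct and is essentially the proof the paper has in mind: the paper omits it, referring to the analogous Proposition~2.2.1 of \cite{CDNo}, and the intended argument is exactly this structural recursion on words, $\K$-linear extension, and the observation that $B$ satisfies precisely the identities \eqref{cd1}, \eqref{cd2}, \eqref{cd3}, (2.2), (2.3) used in the rewriting, so that every rewriting path maps to a true equality in $B$. One small presentational caveat: those identities preserve total word length, so your induction cannot literally run on $n+m$; it should be phrased as induction on the (finite) rewriting derivation, but since you already defer termination and independence of the rewriting order to the construction of $F\big(W(X)\big)$ itself, this does not affect the correctness of the proof.
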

We omit the proof since it is analogous to the one of
\cite[Proposition 2.2.1]{CDNo} for $\NP^l$-algebras. We have for the
$\NP^r$-algebra $F\big(W'(X)\big)$ the analogous proposition. The
constructions of free $\AWB^l$, $\AWB^r$ and $\AWB^{lr}$ are similar
to the construction given above; e.g. in the case of free $\AWB^l$
we take all formal combinations (words) of two operations
$(\cdot,[-,-])$ with the elements from $X$, which have a sense, and
do not contain the elements of the form  $[a \cdot b,c]$ (cf. with
the construction given in \cite{CP}).

It is easy to see that the given construction defines
the functor $\mathbf{F}$ from the category of sets $\mathbf{Set}$ to
$\mathbf{NP}^{lr}$, where $\mathbf{F}(X)=F\big(W(X)\big)$, which is a left
adjoint to the underlying functor
\[
\xymatrix@C=36pt{\mathbf{Set}
\ar@<0.7ex>[r]^-{\mathbf{F}}& \ar@<0.7ex>[l]^-{\mathbf{U}} \mathbf{NP}^{lr}. }
\]
 We have between the category
$\mathbf{Set}$ and the categories $\mathbf{NP}^l$ \cite{CDNo},
$\mathbf{NP}^r$, $\mathbf{AWB}^{lr}$, $\mathbf{AWB}^l$ and
$\mathbf{AWB}^r$ the analogous pairs of adjoint functors. Let $V^{lr}_{A} \colon \mathbf{NP}^{lr}\rightarrow \mathbf{Ass}$,
 $V^{lr}_{L} \colon \mathbf{NP}^{lr}\rightarrow \mathbf{Leib}$ and $T^{lr}_{A} \colon \mathbf{AWB}^{lr}\rightarrow \mathbf{Ass}$  be the forgetful functors, where $ \mathbf{Ass}$ and $ \mathbf{Leib}$ denote the categories of associative and Leibniz algebras, respectively. The analogous meaning will have the symbols
$V^r_{A}$, $V^l_{A}$, $V^r_{L}$, $V^l_{L}$, $T^r_{A}$, $T^l_{A}$.
\begin{proposition} \label{free_aL}
If $P$ is a free $\NP^l$-algebra, then $V^l_{A}(P)$ and
$V^l_{L}(P)$ are free associative and free Leibniz algebras,
respectively.
\end{proposition}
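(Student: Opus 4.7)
The strategy is to exhibit explicit generating sets for each underlying structure on $P = F\big(W^l(X)\big)$ (the free $\NP^l$-algebra on $X$, constructed as in \cite{CDNo} from normal-form words that avoid the subwords $\big[a,[b,c]\big]$ and $[a \cdot b, c]$) and to verify the relevant universal property. Throughout I grade $W^l(X)$ by word-length, the number of $X$-letters involved.

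For $V^l_A(P)$, let $Y_A \subset P$ consist of the elements of $X$ together with every word in $W^l(X)$ whose outermost operation is a bracket. Because $\cdot$ is associative in $P$, every word in $W^l(X)$ decomposes uniquely as $y_1 \cdot y_2 \cdots y_k$ with $y_i \in Y_A$; this puts $W^l(X)$ in bijection with the set of associative words over $Y_A$ and identifies dot multiplication with concatenation. The universal property drops out: any map $\varphi \colon Y_A \to A$ into an associative $\K$-algebra extends uniquely by $y_1 \cdots y_k \mapsto \varphi(y_1) \cdots \varphi(y_k)$, and uniqueness follows since $Y_A$ generates $V^l_A(P)$ as an associative algebra. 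This is a purely combinatorial bookkeeping argument parallel to the construction of $P$ itself.

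For $V^l_L(P)$ the tempting choice $Y_L = X \cup \{\text{dot-topped words in } W^l(X)\}$ does not work directly, because the left-Poisson identity $[u \cdot v, w] = u \cdot [v, w] + [u, w] \cdot v$ gives, inside $V^l_L(P)$, an equation of the form $[\text{generator},\text{generator}] = \text{generator} + \text{generator}$, which is not a consequence of the Leibniz identity and would therefore obstruct freeness on $Y_L$. I plan instead to replace $Y_L$ by a subset $S \subsetneq Y_L$ chosen as a normal-form transversal for the left-Poisson rewrites, picked by a consistent rule (for example, lexicographic on the positions of bracket-factors inside dot-products). The natural Leibniz homomorphism from the free Leibniz algebra on $S$ into $V^l_L(P)$, sending $s \mapsto s$ on generators and commuting with brackets, is then a $\K$-linear bijection: each discarded dot-topped word $[u, w] \cdot v$ is recovered in the image as $[u \cdot v, w] - u \cdot [v, w]$, yielding surjectivity, and a graded basis count at each word-length — comparing the number of $S$-generators plus left-normed brackets on $S$ with the number of normal-form words in $W^l(X)$ — yields injectivity.

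The main obstacle is the Leibniz case. The associative half is the clean combinatorial fact that flattening dot-products is unambiguous, with no competing identities to worry about. The Leibniz case genuinely needs the left-Poisson identity to be absorbed into the choice of generators: one must simultaneously prune $Y_L$ to the right transversal and verify that no relations survive beyond those forced by the Leibniz identity itself. The verification proceeds by induction on word-length, using the confluent Poisson-rewriting already implicit in the construction of $W^l(X)$ to guarantee that a consistent transversal $S$ exists.
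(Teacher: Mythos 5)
Your associative half is sound and is essentially the paper's own argument (carried out there for $\NP^r$ in Proposition \ref{P:free_npr} and cited to \cite{CDNo} for $\NP^l$): since the excluded normal forms $[a,[b,c]]$ and $[a\cdot b,c]$ are never dot-topped, concatenation of normal-form words is always a normal-form word, so $V^l_A(P)$ really is the free associative algebra on $X$ together with the bracket-topped words. The overall strategy for the Leibniz half --- prune the dot-topped words to a transversal for the left-Poisson rewrites --- is also the paper's strategy. The problem is that you never actually produce the transversal, and the rule you gesture at would produce the wrong one.

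Concretely, the paper takes $X_2=X\cup X''$ where $X''$ consists of the words $a_1\cdots a_n$ and $a_1\cdots a_n\cdot[\dots,\dots]$ with all $a_i\in X$: at most one bracket factor, placed in the rightmost slot, with all other dot-factors single letters, \emph{and} with the bracket factor required not to contain subwords of the forms $[a,[b,c]]$ or $[a,[b,d]\cdot c]$. Your proposed rule (``lexicographic on the positions of bracket-factors inside dot-products'') only prunes according to where bracket factors sit in the outer dot-product; it says nothing about the internal structure of the bracket factor itself. But identity (2.5), $[a,b\cdot[c,d]]+[a,[b,d]\cdot c]=[[a,b\cdot c],d]-[[a,d],b\cdot c]$, which is a consequence of combining the Leibniz identity with the left-Poisson identity, shows that any word whose terminal bracket factor contains a subword $[a,[b,d]\cdot c]$ is already a Leibniz combination of brackets of the remaining generators. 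If such words are kept in $S$, the canonical map from the free Leibniz algebra on $S$ to $V^l_L(P)$ acquires a nontrivial kernel, and your ``graded basis count'' would come out unequal rather than confirm injectivity. This is precisely the correction the present paper makes to the argument of \cite{CDNo}, and it is the step your proposal is missing: the transversal must be pruned recursively, using (2.5) in addition to (1.1) and (2.1), not merely by repositioning bracket factors.
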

\begin{proof} This theorem in the terminology of $\NLP$-algebras is proved in \cite{CDNo}, but the proof for the  Leibniz algebras case needs a correction. Therefore we omit the proof of the associative algebra case and present the proof of the second part of the proposition.
Let $P$ be the free $\NP^l$-algebra on the set $X$. Let $X''$ be the
set of all kind of words of the types $a_{1} \cdot \ldots  \cdot
a_{n}$ and $a_{1} \cdot \ldots  \cdot a_{n} \cdot [\dots,\dots]$,
where $a_{1},\dots ,a_{n}\in X$, $n\geq1$ and the bracket
$[\dots,\dots]$ does not contain the words of the forms $[a,[b,c]]$
and $[a,[b,d]\cdot c]$, for $a,b,c,d \in P$. Let $X_{2}=X\bigcup
X''$. Applying (1.1), (2.1) and (2.5) it is easy to show that
$V^l_{L}(P)$ is the free Leibniz algebra on the set $X_{2}$.
\end{proof}

Below we will see that the proof of the analogous statement for $\NP^r$-algebras is more complicated.

\begin{proposition}  \label{P:free_npr}
If $P$ is a free $\NP^r$-algebra (resp. $\AWB^r$), then $V^r_{A}(P)$ and
$V^r_{L}(P)$ (resp. $T^r_{A}(P)$)  are free associative and free Leibniz algebras,
respectively (resp. free associative algebra).
\end{proposition}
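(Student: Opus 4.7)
The plan is to mirror the strategy of Proposition~\ref{free_aL}: identify explicit generating subsets of $P$ whose associative and Leibniz subalgebras coincide with $V^r_A(P)$ (resp.\ $T^r_A(P)$) and $V^r_L(P)$, and then read off the universal property from the normal-form description of~$P$. The underlying tool is the construction of $F\bigl(W'(X)\bigr)$ itself, in which the identities \eqref{cd2}, \eqref{cd3} and (2.4) are used to eliminate every occurrence of the forbidden patterns $[a,b\cdot c]$, $[a,[b,c]]$ and $[[a,c]\cdot d,b]$ respectively.

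For the associative assertions (covering both the free $\NP^r$-algebra and the free $\AWB^r$-algebra), set $X_1$ to be $X$ together with every normal-form word of $W'(X)$ whose outermost operation is a bracket. Since $W'(X)$ excludes precisely those configurations in which a bracket's right slot is a dot-product, every normal-form word of $W'(X)$ factorises uniquely as a product $x_{i_1}\cdot x_{i_2}\cdots x_{i_k}$ of elements of~$X_1$, and linear independence of distinct such products is inherited from the $\K$-module structure of $F\bigl(W'(X)\bigr)$. Hence $V^r_A(P)$ and $T^r_A(P)$ satisfy the universal property of the free associative algebra on~$X_1$: any set map from $X_1$ into an associative algebra extends by concatenation.

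For the Leibniz statement, take $X_2 = X\cup X'''$, where $X'''$ consists of all products $a_1\cdot a_2\cdots a_n$ with $a_i \in X$ and $n\geq 1$, together with all words of the form $[\alpha,\beta]\cdot a_1\cdots a_n$ that are in normal form (no occurrence of $[a,[b,c]]$, $[a,b\cdot c]$ or $[[a,c]\cdot d,b]$ inside the bracket factor). Repeatedly applying \eqref{cd2} pushes any dot-product out of the right slot of a bracket, \eqref{cd3} pushes Leibniz nesting to the left, and (2.4) removes the pattern $[[a,c]\cdot d,b]$; together these exhibit every element of $P$ as a member of the Leibniz subalgebra generated by~$X_2$. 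The universal property of the free Leibniz algebra on $X_2$ is then inherited by $V^r_L(P)$ provided the reduction rules above are confluent.

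The main obstacle, and the reason the statement is less routine than its $\NP^l$-counterpart, is precisely this confluence. The $\NP^l$-identity (2.5) decomposes two different expansions of the same nested bracket into a common form and therefore produces an immediate cancellation; its $\NP^r$-analogue (2.4) instead couples left-nested Leibniz brackets with products on the right, so two distinct reduction sequences applied to a single word in $W'(X)$ need not reach a syntactically identical output without further rewriting. The technical heart of the proof is therefore a careful analysis of the critical overlaps between the rewrites governed by \eqref{cd2}, \eqref{cd3} and (2.4), verifying with the help of the auxiliary identities derived in Section~\ref{S:2} that any two reductions of a given word produce, after finitely many steps, the same $\K$-linear combination of left-normed Leibniz monomials on~$X_2$. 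Once this confluence is in place the argument concludes as in Proposition~\ref{free_aL}.
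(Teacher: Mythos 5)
Your treatment of the associative half is essentially the paper's: the paper also takes $X_1=X\cup X'$ with $X'$ the normal-form outermost-bracket words, notes that $\sigma'_{n,m}$ is everywhere defined so that distinct products of such atoms are distinct basis words of $F\bigl(W'(X)\bigr)$, and concludes that $V^r_A(P)$ and $T^r_A(P)$ are free on $X_1$. That part of your proposal is fine.

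The Leibniz half has a genuine gap: your generating set is too large, and the difficulty you defer to a ``confluence analysis'' is not a confluence problem at all but a linear-dependence problem among your proposed generators. The paper's set $X''$ does \emph{not} consist of all normal-form words $[\dots,\dots]\cdot a_1\cdots a_n$; for every four elements $a,b,c,d$ it deliberately removes eight of the twenty-four words of the shape $[[x,y],z]\cdot t$ (namely $[[a,b],c]\cdot d$, $[[a,c],b]\cdot d$, $[[a,b],d]\cdot c$, $[[d,a],b]\cdot c$, $[[d,b],a]\cdot c$, $[[c,a],d]\cdot b$, $[[c,d],a]\cdot b$, $[[c,d],b]\cdot a$) and keeps the other sixteen. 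The reason is identity (2.7): rearranged, it expresses a nonzero $\K$-linear combination of words $[[x,y],z]\cdot t$ --- which in the Leibniz algebra $V^r_L(P)$ are indecomposable elements, i.e.\ would-be degree-one generators --- as a combination of genuine Leibniz monomials of degree $\geq 2$ in the remaining generators (terms such as $[[a,c]\cdot d,\,b]$, $[b,[a,c]\cdot d]$, $[[a,d],c\cdot b]$). In a free Leibniz algebra the span of the generators meets the span of higher-degree monomials trivially, so if all twenty-four words $[[x,y],z]\cdot t$ are declared generators, as in your $X'''$, the relation (2.7) is visibly \emph{not} a consequence of the Leibniz identity and $V^r_L(P)$ fails to be free on your set. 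No verification of critical pairs of the rewrites \eqref{cd2}, \eqref{cd3}, (2.4) can repair this, because the obstruction lives already in degree one of the candidate generating set; the fix is the paper's: use (1.2), (2.1) and (2.7) to solve for the eight excluded products in terms of the sixteen retained ones, and prove freeness on the smaller set $X_2=X\cup X''$.
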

\begin{proof}Let $P$ be the free $\NP^r$-algebra (resp. $\AWB^r$) on the set $X$. Denote by $X'$ the set of all kind of those
words of the type $[...,...],$ which doesn't contain the words of the forms $ [a,[b,c]], [a,bc],$ and [[a,c]d,b] (resp. $[a,bc]$). Let $X_1 = X\bigcup X'$. It is easy to check that $V^r_{A}(P)$ (resp. $T^r_{A}(P)$) is a free associative algebra on the set $X_1.$

Let $X''$ be the set of all kind of words of the types $a_{1} \cdot
\ldots  \cdot a_{n}$ and $[\dots,\dots]\cdot a_{1} \cdot \ldots
\cdot a_{n},$ where $a_{1},\dots,a_{n}\in X$, $n\geq1$ and the
bracket $[\dots,\dots]$ does not contain the words of the form
$[a,[b,c]],$ and moreover, for any $a,b,c,d\in P$, the fixed words
$[[a,b],c]\cdot d, [[a,c],b]\cdot d, [[a,b],d]\cdot c, [[d,a],b]\cdot
c, [[d,b],a]\cdot c, [[c,a],d]\cdot b, [[c,d],a]\cdot b$ and
$[[c,d],b]\cdot a$ do not belong to $X''$. All other 16 combinations
of the type $[[x,y],z]\cdot t$ of the elements $a,b,c,d$ are in
$X''$. Applying identities (1.2), (2.1) and (2.7) it is easy to
check that $V^r_{L}(P)$ is a free Leibniz algebra on the set
$X_2=X\bigcup X''.$
\end{proof}
An analogous statement for $\AWB^l$ is proved in \cite{CP}.
The following statement proves that left-right $\NP$-algebras do not inherit all properties of left or right $\NP$-algebras.
\begin{proposition} \label{P:free_nplr}
If $P$ is a free $\NP^{lr}$-algebra (resp. $\AWB^{lr}$), then $V^{lr}_{A}(P)$ (resp. $T^{lr}_{A}(P)$)  is not a free associative  algebra and $V^{lr}_{L}(P)$ is not a free Leibniz algebra.
\end{proposition}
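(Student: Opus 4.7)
The plan is to exhibit, in each underlying algebra, an explicit linear dependence that cannot hold in any free algebra. The needed relations are already collected in Section~\ref{S:2}: identity (2.2) serves for the associative assertions (it holds in both $\NP^{lr}$ and $\AWB^{lr}$), and identity (2.5) serves for the Leibniz one.

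For the associative case, I take $X$ to contain four distinct generators $a,b,c,d$ and rewrite (2.2) as
\[
a\cdot c\cdot [b,d]-c\cdot a\cdot [b,d]+[a,c]\cdot d\cdot b-[a,c]\cdot b\cdot d=0
\]
in $V^{lr}_A(P)$; the identical relation holds in $T^{lr}_A(P)$ when $P$ is free $\AWB^{lr}$. From the construction of $P=F\big(W(X)\big)$ in Section~\ref{S:3}, the six elements $a,b,c,d,[a,c],[b,d]$ are basis words of $W(X)$ whose top-level operation is not a dot, so they lie outside the submodule $V^{lr}_A(P)\cdot V^{lr}_A(P)$ and remain $\K$-linearly independent in the indecomposable quotient; the submodule $V$ they span is therefore free of rank six. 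Suppose for contradiction that $V^{lr}_A(P)\cong\K\langle Y\rangle$ is free associative. Then the indecomposable quotient identifies canonically with $\K Y$, so $V$ injects into $\K Y$ as a free submodule; since both $V$ and $\K Y$ are $\K$-flat, each $V^{\otimes n}$ injects into $(\K Y)^{\otimes n}$, and hence the universal map $T(V)\hookrightarrow T(\K Y)\cong V^{lr}_A(P)$ is injective. Inside $V^{\otimes 3}$ the four tensors $a\otimes c\otimes[b,d]$, $[a,c]\otimes d\otimes b$, $c\otimes a\otimes[b,d]$, $[a,c]\otimes b\otimes d$ are four pairwise distinct basis elements, hence $\K$-linearly independent. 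Their images in $V^{lr}_A(P)$ therefore remain independent, contradicting the displayed relation.

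For the Leibniz case, I take $X$ to contain three distinct generators $a,b,c$ and rewrite (2.5) as
\[
[a\cdot b,c]-[a,c\cdot b]+[b\cdot c,a]-[b,a\cdot c]+[c\cdot a,b]-[c,b\cdot a]=0
\]
in $V^{lr}_L(P)$. The nine elements $a,b,c,ab,ba,bc,cb,ca,ac$ are basis words of $W(X)$ whose top-level operation is never a bracket, so they project to linearly independent elements of $V^{lr}_L(P)\big/\big[V^{lr}_L(P),V^{lr}_L(P)\big]$ and span a free $\K$-submodule $V$ of rank nine. If $V^{lr}_L(P)$ were free Leibniz on some $Y$, the analogous flatness argument (using that, via left-normalized brackets, the weight-$n$ component of a free Leibniz algebra on a $\K$-module $W$ is naturally isomorphic to $W^{\otimes n}$) yields an injection of the free Leibniz algebra on $V$ into $V^{lr}_L(P)$. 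The six brackets $[ab,c]$, $[a,cb]$, $[bc,a]$, $[b,ac]$, $[ca,b]$, $[c,ba]$ correspond to six pairwise distinct basis elements of the weight-two component of this free Leibniz algebra on $V$, hence are $\K$-linearly independent, contradicting the displayed relation.

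The principal technical step, common to both parts, is the injectivity of the universal maps $T(V)\hookrightarrow V^{lr}_A(P)$ and of its Leibniz analogue under the freeness hypothesis; once this reduction from ``freeness of the ambient algebra'' to ``distinct ordered tuples of basis elements of $V$ remain $\K$-linearly independent after multiplication'' is in place, identities (2.2) and (2.5) furnish the required non-trivial relations and the contradictions.
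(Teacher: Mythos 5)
Your proposal is correct and takes essentially the same route as the paper's own (very terse) proof: it rests on the identities derived in Section~\ref{S:2} — note that the relations you display are the paper's (2.3) and (2.6), not (2.2) and (2.5); your equation references are shifted by one — combined with the observation that the generators and the relevant brackets/length-two products remain linearly independent in the indecomposables, which is exactly the paper's remark that ``a basis must contain'' these elements. The extra detail you supply is welcome; the only loosely stated step is the injectivity of ``$T(V)\hookrightarrow T(\K Y)$'', since $V$ is not contained in the degree-one part $\K Y$ of $T(\K Y)$ and the map you actually need is the one induced by $V\subset V^{lr}_A(P)$ — this is repaired by the standard leading-term comparison in the tensor-degree (resp. Leibniz weight) filtration, using that the images of your chosen elements span a free direct summand of $\K Y$, and the same remark applies to your claim that the nine words avoid $[P,P]$, which needs the length count (brackets of total length $\geq 3$ decompose only into words of length $\geq 3$) and not merely the shape of the top-level operation.
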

\begin{proof} Let $P$ be the free $\NP^{lr}$-algebra on the set $X$.
A basis for $V^{lr}_{A}(P)$ must contain all the elements from $X$,
and all the elements of the form $[a,b]$, where $a,b\in X$. From
identity (2.2) or (2.3) it follows that $V^{lr}_{A}(P)$ is not a
free associative algebra. Analogously, from identity (2.3) we see
that $T^{lr}_{A}(P)$ is not a free associative  algebra. In the case
of the Leibniz algebra $V^{lr}_{L}(P)$, its basis must contain all
the elements from $X$ and all kind of elements of the form $a_{1}
\cdot \ldots \cdot a_{n}$, where $a_{1},\dots,a_{n}\in X$, $n\geq1$.
The identity (2.6) proves that $V^{lr}_{L}(P)$ is not a free Leibniz
algebra.
\end{proof}

\section{Actions, representations and crossed modules in $\mathbf{NP}$ and $\mathbf{AWB}$} \label{S:4}
Under action we will mean a set of actions derived from the
corresponding split extension, i.e. a derived action in the sense of
\cite{Orzech}. An action for $\NP^l$-algebras is defined in  \cite{CDNo} in the following way:
\begin{definition}[\cite{CDNo}]
Let $M, P \in \mathbf{NP}^l$. We say that  $P$ acts on $M$ if we
have an action of $P$ on $M$ as associative and Leibniz algebras
given respectively by the $\K$-module homomorphisms
\begin{equation}\label{E:dot}
 -\cdot - \colon P \otimes M
\to M \, , \quad \quad \quad -\cdot- \colon M \otimes P \to
M\,\end{equation}
\begin{equation} \label{E:bracket}
[-,-] \colon P \otimes M \to M  \, ,
\quad \quad \quad [-,-] \colon M \otimes P \to M
\end{equation}
 and the following conditions hold:
\begin{align*}
[p_1\cdot p_2,m]  &  = \, p_1\cdot  [p_2,m]+[p_1,m]\cdot  p_2 ; \ &
   [p_1\cdot  m,p_2]  & =  \, p_1\cdot  [m,p_2]+[p_1,p_2]\cdot  m ;\\
[m\cdot  p_1,p_2]  & =  \, m\cdot  [p_1,p_2]+[m,p_2]\cdot  p_1 ; \ &
    [m_1\cdot  m_2,p]  & =  \, m_1\cdot  [m_2,p]+[m_1,p]\cdot  m_2;\\
[m_1\cdot  p,m_2] &  = \, m_1\cdot  [p,m_2]+[m_1,m_2]\cdot  p; \ &
   [p\cdot  m_1,m_2] & =  \, p\cdot  [m_1,m_2]+[p,m_2]\cdot  m_1,
\end{align*}
for all $m,m_1,m_2 \in M; p,p_1,p_2 \in P$.
\end{definition}
\begin{definition}
Let $M, P \in \mathbf{NP}^r$. We say that  $P$ acts on $M$ if we have an
action of $P$ on $M$ as associative and Leibniz algebras  given
 by the $\K$-module homomorphisms \eqref{E:dot} and \eqref{E:bracket}, respectively,
 and the following conditions hold
\begin{align*}
[m,p_1\cdot p_2]  &  = \, p_1\cdot [m,p_2]+[m,p_1]\cdot p_2 ; \ &
 [p_1,p_2\cdot m]  & = \, p_2\cdot[p_1,m]+[p_1,p_2]\cdot m; \\
[p_1,m\cdot p_2]  & = \, m \cdot [p_1,p_2]+[p_1,m]\cdot p_2; \ &
 [p,m_1\cdot m_2]  & = \, m_1\cdot[p,m_2]+[p,m_1]\cdot m_2;\\
[m_1,m_2\cdot p] & =  \, m_2\cdot [m_1,p]+[m_1,m_2]\cdot p; \ &
[m_1,p\cdot m_2]  & = \, p\cdot[m_1,m_2]+[m_1,p]\cdot m_2,
\end{align*}
for all $m,m_1,m_2 \in M$ and $p,p_1,p_2 \in P$.
\end{definition}
\begin{definition} \label{D:act_nplr}
Let $M, P \in \mathbf{NP}^{lr}$. We say that  $P$ acts on $M$ if we
have an action of $P$ on $M$ as left and right $\NP$-algebras.
\end{definition}

Actions in the categories $\mathbf{AWB^l}$, $\mathbf{AWB^r}$ and
$\mathbf{AWB^{lr}}$ are defined in similar ways as in the previous
definitions, but obviously, the Leibniz algebra action conditions
are not required. If an $\NP$-algebra $P$ acts on $M$, and $M$ is
singular, or  equivalently abelian, i.e. $M\cdot M=[M,M]=0$, then
$M$ will be called  a {\it representation} of $P$. Representation in
the category $\mathbf{AWB}$ (for $\mathbf{AWB^l}$ see \cite{CP}) is
defined in a similar way. These definitions coincide with the
special cases of the general definition of module given in
categories of groups with operations in \cite{Orzech}. If $M$ is a
representation of $P$ in $\mathbf{NP}$, then $M$ is a
$P$-$P$-bimodule, $P$ considered as the underlying associative
algebra; analogously, $M$ is an $\AWB$ representation of $P$ and a
Leibniz representation of $P$ defined in \cite{LP}. In the case of
Poisson algebras we obtain the representation defined in
\cite{Fresse}.

A homomorphism between two representations over $P$ is a linear map
$f \colon M \to M'$ satisfying
 \begin{align*}
 f(p\cdot m)= & \ p\cdot f(m) \, ,   & \  f(m\cdot p)= & \ f(m)\cdot p \, , \\
f[p,m]= & \ [p,f(m)] \, ,    & \ f[m,p]= & \ [f(m),p] \, ,
\end{align*}
 for all $ p \in P$ and $m \in M$.

\begin{definition}
Let $P \in \mathbf{NP}$  and $M$ be a  representation of $P$. A
derivation from $P$ to $M$ is a linear map $d \colon P \to M$ satisfying
the conditions
\begin{align*}d(p_1\cdot p_2)&=d(p_1)\cdot p_2+p_1 \cdot d(p_2)\,,\\
d[p_1,p_2]&=[d(p_1),p_2]+[p_1,d(p_2)] \,.
\end{align*}
\end{definition}
We have the analogous definition for $\AWB$.

Denote by $\Der_{\NP}(P,M)$ the $\K$-module of such
derivations;  analogously we will have the notation
$\Der_{\AWB}(P,M)$. Any $\NP$-algebra $P$ is a
representation of $P$ acting on itself by the operations in $P$
(see  \cite[Example 2.3.2]{CDNo}). For $p \in P$, the application
$\ad_p \colon P \to P$ defined by $\ad_p(p') = - [p',p]$ is an example of
derivation. The following definition is a special case of the
definitions given in \cite{Orzech,Porter}.
\begin{definition}
Let $P, M \in \mathbf{NP}$. An abelian extension of $P$  by $M$ is a
short exact sequence
\[E \colon 0 \to M  \xrightarrow{i}  Q \xrightarrow{j} P \to 0\]
where $Q \in \mathbf{NP}$ and $M$ is abelian.
\end{definition}
Any abelian extension defines on $M$ a unique representation of $P$ in such a way that
\begin{align*}
i(j(q)\cdot m)=q \cdot i(m); \ &  \qquad i(m\cdot j(q)) = i(m)\cdot q ;\\
i[j(q),m]=[q,i(m)]; \ &  \qquad i([m,j(q)]) = [i(m),q] ;
\end{align*}
for any $m \in M, q \in Q$. Two abelian extensions $E$ and $E'$ of
$P$ by $M$ are called \emph{equivalent} if there exists a homomorphism of
$\NP$-algebras $f \colon Q \to Q'$ inducing the identity morphisms on $M$
and $P$. Note that in this case $f$ is an isomorphism. Let $M$ be
any representation of $P$. Denote by $\Ext_{\NP}(P,M)$ the set of all
equivalence classes of those abelian extensions of $P$ by $M$, which
induce the given representation $M$ of $P$.
\begin{definition}
Let $M, P \in \mathbf{NP}$ with an action of $P$ on $M$. A crossed
module is a morphism $\mu \colon M \to P$ in $\mathbf{NP}$ satisfying the
following axioms:
\begin{align*}
\mu(p\cdot m) = & \ p\cdot \mu(m)\, ,  & \mu(m\cdot p) = &\ \mu(m)\cdot p \, , \\
\mu[p,m] = &\ [p,\mu(m)]\, ,  &  \mu[m,p] \ = &  \ [\mu(m),p] \, ,\\
\mu(m)\cdot m' = & \  m\cdot m' \ = \    m \cdot \mu(m') \, ,  &
[\mu(m),m'] =   & \ [m,m']  =  \   [m,\mu(m')] \, .
\end{align*}

A homomorphism of crossed modules is a pair $(\phi,\psi) \colon
(M,P,\mu) \to (M',P',\mu')$ where $\phi, \psi$ are morphisms in
$\mathbf{NP}$ such that $\psi \mu= \mu' \phi$ and $\phi(p\cdot m)=
\psi(p)\cdot \phi(m)$; $\phi(m \cdot p) = \phi(m)\cdot \psi(p); \phi[p,m]
= [\psi(p),\phi(m)]; \phi[m,p] = [\phi(m),\psi(p)]$, for
all $p \in P, m \in M$.
\end{definition}
Examples of representations and crossed modules and the construction
of semi-direct products in the category of $\NP$-algebras and
$\AWB$ are analogous to those given for $\NP^l$-algebras and
$\AWB^l$, therefore for these subjects we refer the
reader to \cite{CDNo} and \cite{CP}, respectively.

It is proved in \cite{Mo} that every category of interest is action
accessible in the sense of \cite{BJ}. Since $\mathbf{NP}^{lr}$ is a
category of interest (see Section \ref{S:2}) we obtain
\begin{theorem}
The category $\mathbf{NP}^{lr}$  is action accessible.
\end{theorem}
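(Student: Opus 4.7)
The plan is to reduce the claim to two already-established facts and combine them. First, I would invoke the observation recorded in the Examples paragraph at the end of Section~\ref{S:2} that $\mathbf{NP}^{lr}$ is a category of interest. Second, I would apply the general theorem of \cite{Mo}, which asserts that every category of interest is action accessible in the sense of \cite{BJ}. Putting the two together yields the statement immediately, so most of the work really lies in making sure the first assertion is justified.

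To see that $\mathbf{NP}^{lr}$ is a category of interest, I would match the data of an $\NP^{lr}$-algebra with Orzech's axioms as follows. The set $\Omega_2'$ consists of $\cdot$, $\cdot^\circ$, $[-,-]$, $[-,-]^\circ$, and the required distributivity laws (c) and (d) all hold because $\cdot$ and $[-,-]$ are $\K$-bilinear. Axiom~1 is automatic: the underlying additive group is a $\K$-module and therefore abelian, so every element commutes with every product $x_1 * x_2$. The real content is Axiom~2, which demands that each composite $(x_1 * x_2)\,\bar{*}\,x_3$, for $(*,\bar{*}) \in \Omega_2'\times\Omega_2'$, be rewritten as a word in the eight triple products appearing in the axiom. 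Associativity of $\cdot$ handles the $(\cdot,\cdot)$-case; the Leibniz identity~\eqref{cd3} (and its opposite) handles the $([-,-],[-,-])$-cases; and the two Poisson identities~\eqref{cd1} and~\eqref{cd2} together with their opposites handle the four mixed cases.

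The main (and only) obstacle is the verification of Axiom~2: it is where one genuinely uses \emph{both} the left and the right Poisson identities. This is precisely the reason that, as noted in the Examples, the sibling categories $\mathbf{NP}^l$, $\mathbf{NP}^r$, $\mathbf{AWB}^l$, $\mathbf{AWB}^r$ and $\mathbf{AWB}^{lr}$ fail to be categories of interest: missing either \eqref{cd1} or \eqref{cd2}, or missing the Leibniz identity \eqref{cd3}, one of the mixed composites $(x_1 * x_2)\,\bar{*}\,x_3$ cannot be reduced. Once Axiom~2 has been checked case by case, no further structural argument about $\NP^{lr}$-algebras is needed, and the action accessibility statement follows as a purely formal consequence of the result from \cite{Mo}.
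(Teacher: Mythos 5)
Your proposal is correct and is essentially the paper's own proof: the paper likewise deduces the theorem in one line from the result of \cite{Mo} that every category of interest is action accessible, together with the observation in Section~\ref{S:2} that $\mathbf{NP}^{lr}$ is a category of interest. Your additional verification of Orzech's Axiom~2 (via associativity, the Leibniz identity, and both Poisson identities) simply fills in what the paper dismisses as ``easy to see,'' and it is sound.
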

In \cite{CDLAc} for any category of interest $\mathbf{C}$ and for
any object $A\in \mathbf{C}$ is defined and constructed the
universal strict general actor $\USGA(A)$ of A, which is generally
an object of $\bC_G$. Here we give this construction for the
category $\mathbf{NP}^{lr}$. In this case we have three binary
operations: the addition, denoted by `` + '', the dot and the
(square) bracket operations. $\Om_2'$ from the definition of
category of interest is a set with three elements $\Om_2'=\{\cdot, [\;,\,], [\;,\,]^\circ
\}$. Since the addition is commutative, the action corresponding to
this operation is trivial. Thus we will deal only with actions,
which are defined by dot and bracket operations; the actions of $b$
on $a$ will be denoted as $a \cdot b, b\cdot a, [b,a]$ and $[a,b]$.
Below under $*$ operation we will mean either dot or bracket
operations. Let $A\in\mathbf{NP}^{lr}$; consider all split
extensions of $A$
\[ \xymatrix{E_j \colon 0\ar[r]&A\ar[r]^-{i_j}&C_j\ar[r]^-{p_j}&B_j\ar[r]&0},\quad j\in \bJ.\]
Let $\{b_j* \mid b_j\in B_j,\;*\in\Om_2'\}$ be the corresponding set of
derived actions for $j\in\bJ$. For any element $b_j\in B_j$ denote
${\bbb}_j=\{b_j*,\;*\in\Om_2'\}$. Let $\bB=\{{\bbb}_j \mid b_j\in
B_j,\;j\in\bJ\}$. Thus each element ${\bbb}_j\in\bB$, $j\in\bJ$, is the
special type of a function ${\bbb}_j \colon \Om_2'\lra\text{Maps}(A\to A)$,
${\bbb}_j(*)=b_j*- \colon A\lra A$. According to Axiom $2$ of the
definition of a category of interest, we define $*$ operation,
${\bbb}_i*{\bbb}_k,*\in\Om_2'$, for the elements of $\bB$ by the
equalities
\begin{align*}
&({\bbb}_i*{\bbb}_k)\ol{*}\,(a)=W(b_i,b_k;a;*,\ol{*}).
\end{align*}
We define
\begin{align*}
({\bbb}_i+{\bbb}_k)*(a)=& \ b_i*a+b_k*a,\\
(-{\bbb}_k)*(a)=&-(b_k * a),\\
(-b)*(a)=&-\big(b*(a)\big),\\
- (b_1+\cdots+b_n)=&-b_n-\cdots-b_1,
\end{align*}
where $*\in \Om_2'$, $b,b_1,\dots,b_n$ are certain combinations of
the dot and the bracket operations on the elements of $\bB$, i.e. the
elements of the type $\bbb_{i_1}*_1\cdots*_{n-1}\bbb_{i_n}$, where
$n>1$. We do not know if
the new functions defined by us are again in $\bB$. Denote by
$\cB(A)$ the set of functions \big($\Om_2'\lra\text{Maps}(A\to A)$\big)
obtained by performing all kinds of the above defined operations on
elements of $\bB$ and the new obtained elements as results of
operations. Let $b\sim b'$ in $\cB(A)$ if $b*a=b'*a$,  for
any $a\in A$, $*\in\Om_2'$. It is an equivalence relation; denote by
$\USGA(A)$ be the corresponding quotient algebra. Let
$\mathbf{NP}^{lr}_G$ be a general category of groups with operations
of the category of interest $\mathbf{NP}^{lr}$.
\begin{proposition}
$\USGA(A)$ is an object in $\mathbf{NP}^{lr}_G$.
\end{proposition}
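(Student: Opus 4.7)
The plan is to verify that $\USGA(A)$, as constructed above, satisfies the defining axioms (a)--(d) for an object of a category of groups with operations, \emph{without} needing to verify the full $\mathbf{NP}^{lr}$ identities (associativity of $\cdot$, the Leibniz identity, and the two Poisson identities); this is precisely what it means to lie in $\mathbf{NP}^{lr}_G$ rather than in $\mathbf{NP}^{lr}$ itself. The strategy mirrors the general argument from \cite{CDLAc}, specialized to the set of binary operations $\Om_2' = \{\cdot, [\,,\,], [\,,\,]^\circ\}$.

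First I would check that the operations $+, -, \cdot, [\,,\,]$ originally defined on $\cB(A)$ descend to well-defined operations on the quotient $\USGA(A) = \cB(A)/{\sim}$. This is a routine verification: if $b \sim b'$, then for every $a \in A$ and every $* \in \Om_2'$ the defining formulae yield $(b + c)*a = b*a + c*a = b'*a + c*a = (b'+c)*a$, hence $b + c \sim b' + c$; similar componentwise checks handle $-$ and the two remaining binary operations on $\cB(A)$.

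Next, I would verify axioms (a) and (b). Axiom (a) reduces to the observation that the addition on $\cB(A)$, being pointwise addition in $A$, inherits associativity, commutativity, and the two-sided unit $0$ from $A$, so $(\USGA(A), +, -, 0)$ is an abelian group. Axiom (b) is immediate because $\Om_2'$ already contains the three required binary operations $\cdot, [\,,\,], [\,,\,]^\circ$, where the last is the transposed operation prescribed by the definition of a category of interest.

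Finally, for axioms (c) and (d), the right-distributivity $(\bbb_i + \bbb_k)*(a) = \bbb_i*a + \bbb_k*a$ follows directly from the defining formula; the left-distributivity is obtained by evaluating both sides on arbitrary $a \in A$ and invoking the bilinearity of every derived action in $\mathbf{NP}^{lr}$ (built into Definition \ref{D:act_nplr} and the underlying associative and Leibniz action axioms). The identity $-(b_1+\cdots+b_n) = -b_n - \cdots - b_1$ and the compatibilities $(-b)*a = -(b*a)$ follow immediately from the analogous equalities for the action on $A$. The main potential obstacle is to see that every element of $\cB(A)$ -- including those built iteratively by performing $*$ and $+$ operations on elements of $\bB$ and on previously produced elements -- still satisfies the required additive and sign compatibilities when evaluated on $a$; this is handled by induction on the depth of the iterated construction, with the base case supplied by the linearity of the basic derived actions associated with each split extension $E_j$, and the inductive step supplied by the defining formulae for the operations on $\cB(A)$.
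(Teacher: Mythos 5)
Your proposal is correct and follows essentially the same route as the paper, whose proof consists of the single line ``Direct easy checking of the identities'': you correctly identify that only the identities of $\bE_G$ (group laws, distributivity (c), and the compatibilities (d)) need to be verified for $\USGA(A)$, not the full $\mathbf{NP}^{lr}$ identities, and that these follow by evaluating on elements of $A$ and using the additivity of the derived actions, with an induction on the depth of the iterated construction for general elements of $\cB(A)$. This is precisely the check the paper leaves to the reader.
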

\begin{proof}
Direct easy checking of the identities.
\end{proof}
As above, we will write for simplicity $b*(a)$ instead of
$\big(b(*)\big)(a)$, for $b\in \USGA(A)$ and $a\in A$. Define a set of
actions of $\USGA(A)$ on $A$ in the following natural way. For $b\in
\USGA(A)$ we define $b*a=b*(a)$, $*\in\Om_2'$. Thus if
$b=\bbb_{i_1}*_1\cdots*_{n-1}\bbb_{i_n}$, where we mean certain
round brackets, we have
\begin{align*}
&b\,\ol{*}\,a=(\bbb_{i_1}*_1\cdots*_{n-1}\bbb_{i_n})\,\ol{*}\,(a).
\end{align*}
The right side of the equality is defined inductively according to
Axiom $2$. For $b_k\in B_k$, $k\in\bJ$, we have
\begin{align*}
\bbb_k*a=\bbb_k*(a)=b_k*a.
\end{align*}
Also
\begin{align*}
&(b_1+b_2+\dots+b_n)*a=b_1*(a)+\dots+b_n*(a).
\end{align*}
\begin{proposition}
The set of actions of  $\USGA(A)$ on $A$ is an action in the
category $\mathbf{NP}^{lr}_G$.
\end{proposition}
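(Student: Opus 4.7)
The plan is to verify the defining identities of an action in $\mathbf{NP}^{lr}_G$, namely the conditions listed in (c) and (d) of the definition of a category of groups with operations: distributivity of each $* \in \Om_2'$ over addition in both arguments, together with compatibility with the unary operation $-$. Note that Axiom 1 and Axiom 2 are not required here, since $\mathbf{NP}^{lr}_G = (\Om, \bE_G)$ is only the underlying category of groups with operations.

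First I would check these identities for the basic generators $\bbb_k \in \bB$. For such an element, $\bbb_k * a = b_k * a$ is a derived action in the $\mathbf{NP}^{lr}$-sense obtained from a split extension $E_k$, so the maps $b_k * - \colon A \to A$ are $\K$-module homomorphisms, which gives distributivity in the argument $a$ automatically. Distributivity and compatibility with negation in the first slot, i.e.
\[({\bbb}_i + {\bbb}_k)*a = b_i*a + b_k*a, \qquad (-\bbb_k)*a = -(b_k*a), \qquad b*(-a)=-(b*a),\]
are then immediate from the very definitions $({\bbb}_i+{\bbb}_k)*(a)=b_i*a+b_k*a$ and $(-\bbb_k)*(a)=-(b_k*a)$.

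Next, I would extend these identities to all of $\cB(A)$ by induction on the length $n$ of the expression $b = \bbb_{i_1} *_1 \cdots *_{n-1} \bbb_{i_n}$. The base case $n=1$ is handled above. For the inductive step, by Axiom 2 the quantity $b \,\bar{*}\, a$ unfolds into a sum of words $W(b_{i_1},\dots,b_{i_n};a;*_1,\dots,*_{n-1},\bar{*})$ in which $a$ enters linearly and the $b_{i_j}$ enter through iterated operations in the $\NP^{lr}$-algebra $A$; distributivity in $a$ then follows from distributivity of $\cdot$ and $[-,-]$ on $A$, while linearity in each $\bbb_{i_j}$-slot follows from the inductive hypothesis applied to the shorter subexpressions of $b$. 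The identities $(-b)*a=-(b*a)$ and $-(b_1+\cdots+b_n)=-b_n-\cdots-b_1$ on $\cB(A)$ enforce the group-theoretic compatibility.

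Finally, I would check that the equivalence relation $\sim$ on $\cB(A)$ is a congruence for $+$, $-$ and every $* \in \Om_2'$, so that the verified identities descend to the quotient $\USGA(A) = \cB(A)/\!\sim$. This is direct: if $b \sim b'$ then $b*a = b'*a$ for all $a \in A$ and $* \in \Om_2'$, and compatibility with sums, negation and the operations of $\Om_2'$ applied from either side reduces to pointwise equality of the resulting functions $\Om_2' \to \mathrm{Maps}(A \to A)$.

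The main obstacle is the inductive step through Axiom 2, which is routine but notation-heavy; the whole verification is a structured unpacking that rests on the bilinearity axioms already satisfied by the operations of $A$ and on the way the operations on $\cB(A)$ were defined so as to force linearity pointwise. The proof is analogous to the general argument of \cite{CDLAc} for arbitrary categories of interest.
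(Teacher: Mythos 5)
Your verification is essentially the paper's own proof written out in full: the paper disposes of the statement by noting it is a special case of the general argument in \cite{CDLAc} and that the set of actions satisfies the conditions of \cite[Proposition 1.1]{DaCtr}, and those conditions are precisely the bilinearity-type identities you check on the generators $\bbb_k$, extend to all of $\cB(A)$ through Axiom 2, and descend to the quotient $\USGA(A)$. The one point to anchor explicitly is that ``action'' here means \emph{derived} action (one arising from a split extension), so the identification of that notion with the equational conditions you verify is the content of the cited criterion of \cite{DaCtr} (immediate here because the group operation is commutative) rather than a definition.
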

\begin{proof} It is a special case of the proof of the general statement for categories of interest given in \cite{CDLAc}. The checking shows that the set of  actions of $\USGA(A)$
on $A$ satisfies conditions of \cite[Proposition 1.1]{DaCtr}, which
proves that it is an action in $\mathbf{NP}^{lr}_G$.
\end{proof}
Note that this is an action in $\mathbf{NP}^{lr}_G$, which in general doesn't satisfy the  action conditions in $\mathbf{NP}^{lr}$.
Define a map $d \colon A\lra \USGA(A)$ by $d(a)=\aaa$, where
$\aaa=\{a\cdot,a*,*\in \Om_2'\}$. Thus we have by definition
\begin{align*}
&d(a)*a'=a*a',\quad \forall a,a'\in A,\quad *\in\Om_2'.
\end{align*}
The proofs of the following two statements are special cases of those
given in \cite{CDLAc}.
\begin{lemma}
$d$ is a homomorphism in $\mathbf{NP}^{lr}_G$.
\end{lemma}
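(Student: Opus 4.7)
The plan is to verify, for each of the three binary operations of $\mathbf{NP}^{lr}_G$ (namely $+$, $\cdot$, and $[-,-]$), that $d$ preserves it modulo the equivalence relation defining $\USGA(A)$. Since by construction $b\sim b'$ in $\cB(A)$ iff $b*a=b'*a$ for every $a\in A$ and every $*\in\Om_2'$, it suffices to check, for each pair $a_1,a_2\in A$, each operation $\bullet\in\{+,\cdot,[-,-]\}$, and each $*\in\Om_2'$, the equality $d(a_1\bullet a_2)*a \;=\; \bigl(d(a_1)\bullet d(a_2)\bigr)*a$ for all $a\in A$.

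For the additive part the computation is immediate from the defining rules. On the left, $d(a_1+a_2)*a=(a_1+a_2)*a$ by the definition of $d$. On the right, $\bigl(d(a_1)+d(a_2)\bigr)*a=d(a_1)*a+d(a_2)*a=a_1*a+a_2*a$ by the formula $(\bbb_i+\bbb_k)*(a)=b_i*a+b_k*a$ used in the construction of $\cB(A)$. Equality then follows from the distributive identity (c) in the definition of a category of groups with operations applied in $A$.

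For $\bullet\in\{\cdot,[-,-]\}$ the argument proceeds by Axiom~2. By that axiom, $\bigl(d(a_1)\bullet d(a_2)\bigr)\,\overline{*}\,a$ unfolds, inside $\USGA(A)$, as a specific word $W$ in nested $\cdot$ and $[-,-]$ applied to $a_1,a_2,a$, where the choice of $W$ is precisely the one dictated by the $\NP^{lr}$ identities (associativity, the Leibniz identity \eqref{cd3}, and the left and right Poisson identities \eqref{cd1}, \eqref{cd2}). On the other side, $d(a_1\bullet a_2)*a=(a_1\bullet a_2)*a$, and expanding this expression in $A$ via those same identities produces exactly the same word in $a_1,a_2,a$. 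Hence the two elements of $\cB(A)$ act identically on every $a\in A$, giving $d(a_1\bullet a_2)\sim d(a_1)\bullet d(a_2)$.

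The main subtlety, and the expected obstacle, is the bookkeeping: one has to fix, for each ordered pair $(\bullet,\overline{*})\in\Om_2'\times\Om_2'$, the specific word $W$ used in Axiom~2 and match it against the corresponding identity from Definition~\ref{D:act_nplr}, which is what guarantees that every split extension of $A$ (and hence every element of $\bB$) realizes the prescribed word. Once this match-up is done once and for all, each of the nine cases reduces to a routine rewriting that parallels the proof of the analogous statement for general categories of interest in \cite{CDLAc}, which is why the authors indicate the verification is direct.
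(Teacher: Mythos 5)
Your argument is correct and is essentially the paper's: the paper simply defers to the general proof for categories of interest in \cite{CDLAc}, which is exactly the direct verification you carry out --- additivity from distributivity of the operations in $A$, and preservation of $\cdot$ and $[-,-]$ because the word $W$ of Axiom~2 defining $\bigl(d(a_1)\bullet d(a_2)\bigr)\,\overline{*}\,a$ holds as an identity in $A\in\mathbf{NP}^{lr}$, so both sides equal $W(a_1,a_2;a)$ computed with the inner operations. (Only a cosmetic point: the matching of $W$ with the identities is guaranteed by $A$ itself being an object of the category of interest, not by Definition~\ref{D:act_nplr}.)
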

\begin{proposition}
$d \colon A\lra \USGA(A)$ is a crossed module in $\mathbf{NP}^{lr}_G$.
\end{proposition}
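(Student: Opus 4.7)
The goal is to verify the two crossed-module axioms for $d \colon A \to \USGA(A)$: the equivariance $d(b * a) = b * d(a)$ for every $b \in \USGA(A)$, $a \in A$ and $* \in \Om_2'$, and the Peiffer identities $d(a) * a' = a * a' = a * d(a')$ for all $a, a' \in A$ and $* \in \Om_2'$. That $d$ is a homomorphism in $\mathbf{NP}^{lr}_G$ is exactly the content of the preceding Lemma, so only these two axioms remain.

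The Peiffer identities are essentially built into the construction. By definition $d(a) = \aaa$, and the action of $\USGA(A)$ on $A$ was arranged so that $\aaa * a' = a * a'$, which gives the first half. Since $\Om_2'$ contains the opposite $*^\circ$ of each non-abelian binary operation of $\mathbf{NP}^{lr}$, applying the same identity with $*^\circ$ in place of $*$ yields $a * d(a') = d(a') *^\circ a = a' *^\circ a = a * a'$, i.e.\ the symmetric half.

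For equivariance I would use that two elements of $\USGA(A)$ coincide precisely when they induce the same function on $A$ for every $* \in \Om_2'$. Fixing an arbitrary $a'' \in A$ and $\ol{*} \in \Om_2'$, one then computes
\begin{align*}
d(b * a)\,\ol{*}\,a'' &= (b * a)\,\ol{*}\,a'',\\
\bigl(b * d(a)\bigr)\,\ol{*}\,a'' &= W(b,\aaa;a'';*,\ol{*}) = W(b,a;a'';*,\ol{*}),
\end{align*}
the first line by the defining equation of $d$ and of the action of $\USGA(A)$ on $A$, the second by unfolding via Axiom~2 and then replacing $\aaa$ by $a$ using $\aaa * a''' = a * a'''$ for every $a''' \in A$. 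The required equality $(b*a)\,\ol{*}\,a'' = W(b,a;a'';*,\ol{*})$ is now just Axiom~2 applied inside a split extension of $A$ realising $b$ as a derived action—the situation already encoded in the definition of $\USGA(A)$.

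The only real obstacle is combinatorial bookkeeping: one must traverse every pair $*,\ol{*} \in \Om_2' = \{\cdot,[\,,\,],[\,,\,]^\circ\}$ and every syntactic shape that $b \in \USGA(A)$ can take (basic classes $\bbb_j$, finite sums $\bbb_{i_1} + \cdots + \bbb_{i_n}$, and iterated $*$-products of such). Since this bookkeeping has been done once and for all in \cite{CDLAc} for an arbitrary category of interest, I would organise the argument as a direct specialisation of that construction to $\mathbf{NP}^{lr}$, which is exactly what the paper indicates by calling this a special case of the general statement proved there.
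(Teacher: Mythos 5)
Your proposal is correct and follows essentially the same route as the paper, which simply observes that the statement is a special case of the general result for categories of interest proved in \cite{CDLAc}; your explicit unpacking of the Peiffer identities from $\aaa * a' = a * a'$ and of equivariance via Axiom~2 and the extensionality of elements of $\USGA(A)$ matches how that general argument runs. The only caveat is that the full verification for arbitrary $b$ (iterated sums and $*$-products of the $\bbb_j$) does require the inductive bookkeeping you defer to \cite{CDLAc}, exactly as the paper does.
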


According to the general definition of center \cite{Orzech} (cf. with
the definition in \cite{CDLAc}) we describe the center of an object
in $\mathbf{NP}^{lr}$ as follows

\begin{definition} The center of $P\in \mathbf{NP}^{lr}$ is

\[Z(P)=\{z\in P \mid z\cdot p=p\cdot z=[z,p]=[p,z]=0,\ p\in P\}.\]

\end{definition}
It is easy to see that $Z(P)=\Ker d$.

 Here we give the definition of an actor in $\mathbf{NP}^{lr}$ (for the case of a
category of interest see \cite{CDLAc}).
\begin{definition}
For any object $A$ in $\mathbf{NP}^{lr}$ an actor of $A$ is an
object $\rm{Act}(A)\in \mathbf{NP}^{lr}$, which has an action on $A$
in the same category (i.e. satisfying the conditions of Definition
\ref{D:act_nplr}), such that for any object $C$ in $\mathbf{NP}^{lr}$ with an
action on $A$, there is a unique morphism $\vf \colon C\lra \text{Act}(A)$
with
\begin{align*} c\cdot a= & \ \vf(c)\cdot a,   &   a \cdot c & \ =    a \cdot \vf(c), \\
[c,a]= & \ [\vf(c),a],     &  [a,c] & \ =   [a,\vf(c)],
\end{align*}
for any $a\in A$ and $c\in C$.
\end{definition}

 According to the same paper, an actor of $A$ is a split extension classifier for $A$
 in the sense of \cite{BJK1}.
From the results of \cite{CDLAc} we obtain.
\begin{theorem}
For any element $A\in \mathbf{NP}^{lr}$ there exists an actor of $A$ if and only if the semidirect product $\USGA(A)\ltimes A\in \mathbf{NP}^{lr}$. If it is the case, then  $\Actor(A)= \USGA(A)$.
\end{theorem}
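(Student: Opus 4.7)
The plan is to deduce this statement as a direct specialization of the general theorem for categories of interest proved in \cite{CDLAc}, using the concrete construction of $\USGA(A)$ developed in the preceding propositions. Since $\mathbf{NP}^{lr}$ is a category of interest, the general machinery applies, and the proof reduces to matching the abstract hypotheses to the concrete situation.

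For the ``$\Leftarrow$'' direction, I would assume that the semidirect product $\USGA(A)\ltimes A$ lies in $\mathbf{NP}^{lr}$. The crucial observation is that $\USGA(A)\ltimes A\in\mathbf{NP}^{lr}_G$ belongs to $\mathbf{NP}^{lr}$ precisely when the Axiom 2 identities (i.e.~the defining identities of $\mathbf{NP}^{lr}$ beyond those of $\mathbf{NP}^{lr}_G$) hold on the pair of subalgebras $\USGA(A)$ and $A$. Evaluating these identities on elements of $\USGA(A)$ alone shows that $\USGA(A)$ itself satisfies all the identities of $\mathbf{NP}^{lr}$, and evaluating them on mixed tuples shows that the action of $\USGA(A)$ on $A$ satisfies the conditions of Definition \ref{D:act_nplr}. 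Combined with the universal property of $\USGA(A)$ in $\mathbf{NP}^{lr}_G$ (applied to any $C\in\mathbf{NP}^{lr}$ acting on $A$), the induced morphism $\vf\colon C\to \USGA(A)$ lives in $\mathbf{NP}^{lr}$ and is unique. Hence $\USGA(A)$ meets the universal property of an actor, so $\Actor(A)=\USGA(A)$.

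For the ``$\Rightarrow$'' direction, I would assume $\Actor(A)$ exists in $\mathbf{NP}^{lr}$. Its canonical action on $A$ yields a split extension of $\Actor(A)$ by $A$, which is among the extensions $E_j$ used to build $\bB$, so $\Actor(A)$ factors through the construction of $\USGA(A)$. The universal property of $\Actor(A)$ in $\mathbf{NP}^{lr}$ together with that of $\USGA(A)$ in $\mathbf{NP}^{lr}_G$ produces mutually inverse morphisms, giving an isomorphism $\Actor(A)\cong\USGA(A)$ in $\mathbf{NP}^{lr}_G$. Transporting the $\mathbf{NP}^{lr}$-structure of $\Actor(A)$ along this isomorphism places $\USGA(A)$ in $\mathbf{NP}^{lr}$, and the semidirect product $\USGA(A)\ltimes A$, being isomorphic to $\Actor(A)\ltimes A\in\mathbf{NP}^{lr}$, also lies in $\mathbf{NP}^{lr}$.

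The main obstacle I anticipate is the bookkeeping between $\mathbf{NP}^{lr}$ and $\mathbf{NP}^{lr}_G$: careful verification is needed that the Axiom 2 identities in the ambient semidirect product are exactly equivalent both to $\USGA(A)$ being an $\mathbf{NP}^{lr}$-algebra and to the action on $A$ satisfying the conditions of Definition \ref{D:act_nplr}. This is precisely what the general result in \cite{CDLAc} establishes on the abstract level, so rather than re-prove it I would simply invoke it and verify that the construction of $\USGA(A)$ given above fits the hypotheses of that general result.
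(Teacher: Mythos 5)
Your proposal matches the paper's treatment exactly: the paper offers no independent argument for this theorem, simply stating that it follows from the general results of \cite{CDLAc} on universal strict general actors in categories of interest, which apply here because $\mathbf{NP}^{lr}$ was shown to be a category of interest and $\USGA(A)$ was constructed as an instance of that general construction. Your supplementary sketch is a reasonable outline of how that general proof goes (though, taken literally, your ``only if'' direction would be circular, since applying the universal property of $\Actor(A)$ to $\USGA(A)$ presupposes that $\USGA(A)$ already lies in $\mathbf{NP}^{lr}$ and acts on $A$ there), but since you ultimately defer to \cite{CDLAc} just as the paper does, the two approaches coincide.
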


At the end of this section we give an example of a subcategory in
$\mathbf{NP}^{lr}$, which satisfies the amalgamation property. This
result can be applied to the description of effective codescent
morphisms in the corresponding subcategory. For the definition of
amalgamation property one can see \cite{BJK}.

Recall that a ring $R$ (generally without a unit) is von Neumann
regular if for any $r\in R$ there exists an element $r'\in R$ such
that $rr'r=r$.

\begin{proposition} In the category of $\NP^{lr}$-algebras there exists a subcategory, which satisfies the amalgamation property.
\end{proposition}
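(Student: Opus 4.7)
The plan is to identify the concrete subcategory already pointed to in the discussion preceding the proposition, namely the full subcategory $\mathbf{cVNR}\hookrightarrow \mathbf{NP}^{lr}$ whose objects are commutative von Neumann regular rings equipped with the trivial bracket operation $[a,b]=0$ for all $a,b$, and then transport the amalgamation property known for $\mathbf{cVNR}$ into $\mathbf{NP}^{lr}$ along this inclusion.

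First, I would verify that $\mathbf{cVNR}$ (with trivial bracket) is genuinely a subcategory of $\mathbf{NP}^{lr}$. The associative structure is part of the data; the Leibniz identity \eqref{cd3} and the two Poisson identities \eqref{cd1}, \eqref{cd2} all reduce to $0=0$ because every bracket vanishes, so each object is an $\NP^{lr}$-algebra. Morphisms are ring homomorphisms, and these automatically preserve the trivial bracket, so $\mathbf{cVNR}$ is a full subcategory of $\mathbf{NP}^{lr}$; in fact, the functor forgetting the bracket is an isomorphism onto the category of commutative von Neumann regular rings as used in \cite{BJK}.

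Second, I would invoke the amalgamation result: by \cite{BJK} the category of commutative von Neumann regular rings is a category of interest, and by the result of \cite{Cor} cited in the paper, this category satisfies the amalgamation property for protosplit (equivalently here, all) monomorphisms. Through the identification above, the same property is inherited by $\mathbf{cVNR}$ viewed inside $\mathbf{NP}^{lr}$: given a span $A\leftarrow C\rightarrow B$ of monomorphisms in $\mathbf{cVNR}$, the amalgam produced in commutative von Neumann regular rings is again an object of $\mathbf{cVNR}$ and the comparison maps are again monomorphisms of $\NP^{lr}$-algebras because all brackets remain trivial.

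The only point that requires any care, and which I would regard as the main (mild) obstacle, is ensuring that the amalgamation takes place inside the chosen subcategory rather than in the ambient $\mathbf{NP}^{lr}$: one must check that the amalgam supplied by \cite{Cor} is still commutative von Neumann regular and is still equipped with the trivial bracket. Both are immediate because the subcategory is closed under the pushout-style construction used to produce the amalgam (commutativity and von Neumann regularity are preserved, and a zero bracket on the generators extends to a zero bracket on the amalgam). This completes the proof, with $\mathbf{cVNR}$ being the desired subcategory of $\mathbf{NP}^{lr}$ satisfying the amalgamation property.
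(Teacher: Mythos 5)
Your proposal is correct and follows essentially the same route as the paper: both take the full subcategory of commutative von Neumann regular rings with trivial bracket inside $\mathbf{NP}^{lr}$ and transport the known amalgamation property of commutative von Neumann regular rings (from \cite{BJK} and \cite{Cor}) along the evident isomorphism of categories. Your additional checks---that the trivial bracket makes the identities \eqref{cd1}, \eqref{cd2}, \eqref{cd3} vacuous and that the amalgam remains in the subcategory---are sound elaborations of what the paper leaves implicit.
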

\begin{proof} Consider the full subcategory in
$\mathbf{NP}^{lr}$, whose objects are
commutative von Neumann regular rings with trivial bracket
operations. Now it remains to apply the result of \cite{BJK}, where it is proved that the category of (not necessarily
unital) commutative von Neumann regular rings satisfies the
amalgamation property.
\end{proof}

\section{Cohomology} \label{S:5}

We recall the constructions of complexes for Hochschild and Leibniz
cohomologies,  for cohomologies of left algebras with bracket and
left $\NP$-algebras, i.e. $\AWB^l$ and $\NP^l$-algebras according to
\cite{CP,CDNo}, respectively. Below for $P\in \NP$ instead of
underlying associative and Leibniz algebras $V_A(P), V_L(P)$ and
underlying $\AWB$ we will write for simplicity just $P$ and will note
what kind of algebras we mean, similarly for $P\in \AWB$ and
$T_A(P), T_L(P)$.

Let $P$ be a left $\NP$-algebra over a field $\K$ and $M$  a
 representation of $P$. In particular, $P$ is an associative algebra and
$M$ is a $P$-$P$-bimodule and, on the other hand, $P$ is a Leibniz
algebra and $M$ is a representation of $P$ in the category of
Leibniz algebras. Let $(C^*_H(P,M),\partial^n_H)$ be the Hochschild
complex and $(C^*_L(P,M),\partial^n_L)$ be the Leibniz complex. We
recall that for $n \geq 0$
\[C^n_H(P,M) = C^n_L(P,M) = \Hom(P^{\otimes n},M)\]
 and coboundary maps $\partial^n_H$ and $\partial^n_L$ are
given by
\begin{multline*}
\partial^n_H (f)(p_1,\dots,p_{n+1}) =(-1)^{n+1}\Big\{p_1f(p_2,\dots,p_{n+1})\\ +
\sum_{i=1}^n(-1)^if(p_1,\dots,p_ip_{i+1},\dots,p_{n+1})+(-1)^{n+1}f(p_1,\dots,p_n)p_{n+1}\Big\}\, ,
\end{multline*}
\begin{multline*}\partial^n_L (f)(p_1,\dots,p_{n+1}) =[p_1,f(p_2,\dots,p_{n+1})] \\ + \sum_{i=2}^{n+1}(-1)^i[f(p_1,\dots,\widehat{{p_i}},\dots,p_{n+1}),p_i] \\+
\sum_{1 \leq i < j \leq
n+1}(-1)^{j+1}f(p_1,\dots,p_{i-1},[p_i,p_j],p_{i+1},
\dots,\widehat{p_j},\dots,p_{n+1}) \; .
\end{multline*}
Thus $C^n_H(P,M)$ and  $C^n_L(P,M)$ are complexes of $\K$-vector
spaces. We will need below the $P$-$P$-bimodule $M^e$, defined by
$M^e = \Hom(P, M)$ as a $\K$-vector space, and a bimodule structure on $M^e$
given by $(p_1\cdot f)(p_2) = p_1\cdot f(p_2)$; $(f\cdot
p_1)(p_2)=f(p_2)\cdot p_1$. We have an isomorphism of $\K$-vector
spaces $\theta_n \colon  C^{n+1}_H(P,M) \to C^n_H(P,M^e), n \geq 1$,
defined in an obvious way
$\theta_n(f)(p_1,\dots,p_n)(p)=f(p_1,\dots,p_n,p)$. Denote the
coboundary maps of the complex $C^*_H(P,M^e)$ by $\partial^{e,*}_H$.
Let
\begin{align*}
 \bar C^*_H(P,M)= &  \ (C^n_H(P,M),\partial^n_H,n \geq 1), \\
 \bar C^*_H(P,M^e)= &  \ (C^n_H(P,M^e),\partial^{e,n}_H, n \geq 1),\\
 \bar{C}^*_L(P,M) =  &  \ (C^n_L(P,M),\partial^n_L,n \geq 1).
\end{align*}

Consider the following homomorphisms of cochain complexes, defined in
\cite{CP} and \cite{CDNo}, respectively,
\[ \alpha^*  \colon \bar{C}^*_H(P,M)  \to \bar
C^*_H(P,M^e)\] and
\[\beta^*  \colon \bar{C}^*_L(P,M) \to
\bar{C}^*_H(P,M^e)\]
and given by
\[\alpha^1(f)(p_1)(p_2) = [p_1,f(p_2)]+[f(p_1),p_2]-f([p_1,p_2]),\]
and for $n>1$
\begin{multline*}
\alpha^n(f)(p_1,\dots,p_n)(p_{n+1})=[f(p_1,\dots,p_n),p_{n+1}] -
f([p_1,p_{n+1}],p_2,\dots,p_n) \\ - f(p_1,[p_2,p_{n+1}],\dots,p_n) - \dots -
f(p_1,\dots,p_{n-1},[p_n,p_{n+1}]),
\end{multline*}
\begin{align*}
\beta^{2k+1} = &  \ \theta_{2k+1} \partial^{2k+1}_L \ ,
k\geq 0 \, , \\
\beta^{2k} = &  \ \partial^{e,2k-1}_H \theta_{2k-1} \ ,
k\geq 1 \, .
\end{align*}
Note that $\al^1=\bt^1$.
$\alpha^*$ and $\beta^*$ are homomorphisms of complexes (see resp. \cite{CP} and \cite{CDNo}). Let $\cone(\al^*)$
and $\cone(-\bt^*)$ be the mapping cones and $C^*(P,M)=\cone(\al^*)\bigsqcup_{(i_1,i_2)}\cone(-\bt^*)$ the pushout, where $i_1$ and $i_2$ are the following injections of complexes
\[\xymatrix{\cone\,(\al^*)&C_H^{*-1}(P,M^e)\ar[l]_-{i_1}
\ar[r]^-{i_2}&\cone\,(-\bt^*)}.\]
Define
$C^0_{\NP^l}(P,M)=0$, $C^1_{\NP^l}(P,M)=\Hom(P,M)$,
$C_{\NP^l}^n(P,M)=C^n(P,M)$, $n\geq 2$;
$\pa_{\NP^l}^0=0$, $\pa_{\NP^l}^1=(\pa_H^1,0,\pa_L^1)$,
$\pa_{\NP^l}^n=\pa^n$, $n\geq 2$.
We have $\pa_{\NP^{l}}^{n+1}\,\pa_{\NP^l}^n=0$, $n\geq 0$, so
$\{C_{\NP^l}^n(P,M),\pa_{\NP^l}^n,n\geq 0\}$ is a complex
which has the form
\[  \xymatrix{&&0\ar[d]&&\\
&&\hskip-5mm \Hom(P,M)\ar[dll]_-{-\pa_H^1}\ar[d]^0\ar[drr]^-{-\pa_L^1}\hskip-5mm&&\\
C_H^2(P,M)\ar[d]_-{-\pa_H^2}\ar[drr]^{\al^2}\hskip-5mm&\hskip-5mm\oplus
\hskip-5mm&\hskip-5mm
C_H^1(P,M^e)\ar[d]^-{\pa_H^{e,1}}\hskip-5mm&\hskip-5mm\oplus\hskip-5mm&
\hskip-5mm C_L^2(P,M)\ar[dll]_-{-\bt^2}\ar[d]^-{-\pa_L^2}\\
&&&&}\]
\vskip-5mm
\[\xymatrix{
C_H^3(P,M)\ar[d]_-{-\pa_H^3}\ar[drr]^{\al^3}\hskip-5mm&\hskip-5mm\oplus
\hskip-5mm&\hskip-5mm C_H^2(P,M^e)\ar[d]^{\pa_H^{e,2}}\hskip-5mm
&\hskip-5mm\oplus\hskip-5mm&\hskip-5mm C_L^3(P,M)\ar[dll]_-{-\bt^3}\ar[d]^-{-\pa_L^3}\\
\vdots&\vdots&\vdots&\vdots&\vdots}\] \vskip+5mm The cohomology
vector spaces $H_{\NP^l}^n(P,M)$, $n\geq 0$, of an
$\NP^l$-algebra $P$ with coefficients in a representation $M$ of
$P$ are defined by
\[  H_{{\NP^l}}^n(P,M)=H^n(C_{\NP^l}^*(P,M),\pa_{\NP^l}^n),\;\;\;n\geq 0. \]
According to \cite{CP} the cohomology of $\AWB$ is defined by
$H_{\AWB}^{n-1}(P,M)= H^n\big(\cone(\alpha^*)\big)$, for $n\geq 1$,
where $P\in \AWB$.  Note that in $\cone(\alpha^*)$ the  zero term
$\cone(\alpha^*)^0$ is zero, and the first one is $C_H^1(P,M)\oplus
C_H^0(P,M^e)$. In this paper the cohomology of $\AWB^l$ are defined as
$H_{\AWB^l}^0(P,M)=0$ and  $H_{\AWB^l}^n(P,M)=
H^n\big(\cone(\alpha^*)\big)$, for $n\geq 1$.

Now we shall define the cohomology vector spaces of $\NP^r$ and
$\NP^{lr}$-algebras. Let $\theta'_n \colon C^{n+1}_H(P,M) \to
C^n_H(P,M^e)$, for $n\geq 1$, be the homomorphism defined by
$\theta'_1(f)(p_1)(p_2)=f(p_2,p_1)$ and $\theta'_n(f)(p_1,
p_2,\dots, p_n)(p_{n+1})=f(p_{n+1},p_1,\dots,p_n), n>1$. It is easy
to see that $\theta'_n$ is an isomorphism for each $n \geq 1$.
Define the homomorphisms
\begin{align*}
&\al'^{*}  \colon  \bar{C}^*_H(P,M)\to
\bar{C}^*_H(P,M^e) \;,\\
&\bt'^{*} \colon \bar{C}^*_L(P,M)\to \bar{C}^*_H(P,M^e) \;,
\end{align*}
by
\[\alpha'^1(f)(p_1)(p_2) = [f(p_2),p_1]+[p_2,f(p_1)]-f([p_2,p_1])\]
and for $n>1$ by
\begin{multline*}\alpha'^n(f)(p_1,\dots,p_n)(p_{n+1})=[p_{n+1},f(p_1,\dots,p_n)] -
f([p_{n+1},p_1],p_2,\dots,p_n) \\ - f(p_1,[p_{n+1},p_2],\dots,p_n) - \dots -
f(p_1,\dots,p_{n-1},[p_{n+1},p_n]),
\end{multline*}
\begin{align*}
\beta'^{2k+1}   =  \ \theta'_{2k} \partial^{2k+1}_L, \
k\geq 0  \; , \qquad \qquad
\beta'^{2k} =  \ \partial^{e,2k-1}_H \theta'_{2k-1}, \
k\geq 1 \;.
\end{align*}
We have  $\al'^1=\bt'^1$.
Easy checking shows that $\alpha'^{*}$ and $\beta'^{*}$ are homomorphisms of complexes.

By taking the pushout
$C'^{*}(P,M)=\cone(\al'^{*})\bigsqcup_{(i'_1,i'_2)}\cone(-\bt'^{*})$,
where $i'_1$ and $i'_2$ are the following injections of complexes
\[\xymatrix{\cone\,(\al'^{*})&C_H^{*-1}(P,M^e)\ar[l]_-{i'_1}
\ar[r]^-{i'_2}&\cone\,(-\bt'^{*})\; , }\]
we construct the complex analogous to $\{C_{\NP^l}^n(P,M),\pa_{\NP^l}^n,n\geq 0\}$,
which will be denoted by $\{C_{\NP^r}^n(P,M),\pa_{\NP^r}^n,n\geq 0\}$. The complex has the form
\[  \xymatrix{&&0\ar[d]&&\\
&&\hskip-5mm \Hom(P,M)\ar[dll]_-{-\pa_H^1}\ar[d]^0\ar[drr]^-{-\pa_L^1}\hskip-5mm&&\\
C_H^2(P,M)\ar[d]_-{-\pa_H^2}\ar[drr]^{\al'^2}\hskip-5mm&\hskip-5mm\oplus
\hskip-5mm&\hskip-5mm
C_H^1(P,M^e)\ar[d]^-{\pa_H^{e,1}}\hskip-5mm&\hskip-5mm\oplus\hskip-5mm&
\hskip-5mm C_L^2(P,M)\ar[dll]_-{-\bt'^2}\ar[d]^-{-\pa_L^2}\\
&&&&}\]
\vskip-5mm
\[\xymatrix{
C_H^3(P,M)\ar[d]_-{-\pa_H^3}\ar[drr]^{\al'^3}\hskip-5mm&\hskip-5mm\oplus
\hskip-5mm&\hskip-5mm C_H^2(P,M^e)\ar[d]^{\pa_H^{e,2}}\hskip-5mm
&\hskip-5mm\oplus\hskip-5mm&\hskip-5mm C_L^3(P,M)\ar[dll]_-{-\bt'^3}\ar[d]^-{-\pa_L^3}\\
&&&&}\] \vskip-5mm
\[\xymatrix{
C_H^4(P,M)\ar[d]_-{-\pa_H^4}\ar[drr]^{\al'^4}\hskip-5mm&\hskip-5mm\oplus
\hskip-5mm&\hskip-5mm C_H^3(P,M^e)\ar[d]^{\pa_H^{e,3}}\hskip-5mm
&\hskip-5mm\oplus\hskip-5mm&\hskip-5mm C_L^4(P,M)\ar[dll]_-{-\bt'^4}\ar[d]^-{-\pa_L^4}\\
\vdots&\vdots&\vdots&\vdots&\vdots}\] \vskip+5mm

 The cohomology vector spaces of an $\NP^r$-algebra $P$ with coefficients in a representation $M$ of $P$ are defined as the cohomologies of this complex
 and denoted as $H_{\NP^{r}}^n(P,M), n\geq 0$.

Now we construct the complex for the cohomology of an $\NP^{lr}$-algebra $P$. Consider the following pairs of homomorphisms of complexes

\begin{align*}
&(\al^{*}, \al'^{*}) \colon \bar{C}^*_H(P,M)\to
\bar{C}^*_H(P,M^e)\oplus \bar{C}^*_H(P,M^e)\;,\\
&(\bt^{*}, \bt'^{*}) \colon \bar{C}^*_L(P,M)\to \bar{C}^*_H(P,M^e)\oplus \bar{C}^*_H(P,M^e) \;.
\end{align*}

 From these homomorphisms we obtain two cones: $\cone(\alpha^*,\alpha'^{*})$ and $\cone(\bt^*,\bt'^{*})$.  We have the following homomorphisms of complexes
\[\xymatrix{\cone(\al^*,\al'^{*})&C_H^{*-1}(P,M^e)\oplus C_H^{*-1}(P,M^e)\ar[l]_-{j_1}
\ar[r]^-{j_2}&\cone(-\bt^*,-\bt'^{*})}.\] The pushout of the pair
$(j_1,j_2)$ gives the desired complex. In particular, we take
$C^0_{\NP^{lr}}(P,M)=0$, $C^1_{\NP^{lr}}(P,M)=\Hom(P,M)$,
$C^n_{\NP^{lr}}(P,M)=C^n_H(P,M)\oplus C^n_H(P,M^e)\oplus
C^n_H(P,M^e)\oplus C^n_L(P,M)$, for $n\geq 2$;
$\pa_{\NP^{lr}}^0=0$, $\pa_{\NP^{lr}}^1=(-\pa_H^1, 0, 0, -\pa_L^1)$,
$\pa_{\NP^{lr}}^n$ is induced by $\alpha^n, \alpha'^{n},
\partial^{e,n-1}_H, \partial^{e,n-1}_H, \bt^n, \bt'^{n}$, for $n\geq
2$.

We have $\pa_{\NP^{lr}}^{n+1}\,\pa_{\NP^{lr}}^n=0$, for $n\geq
0$, therefore $\{C_{\NP^{lr}}^n(P,M),\pa_{\NP^{lr}}^n,n\geq 0\}$ is
a complex; it has the following form

\newpage

\begin{figure}[!h]
\centering
\begin{tikzpicture}[x=1.00mm, y=1.00mm, inner xsep=0pt, inner ysep=0pt, outer xsep=0pt, outer ysep=0pt]
\draw(101,110) node[anchor=base west]{\fontsize{12}{12}\selectfont 0};
\draw(95,90) node[anchor=base west]{\fontsize{12}{12}\selectfont $\Hom(P,M)$};
\draw(50,60) node[anchor=base west]{\fontsize{12}{12}\selectfont $C^2_H(P,M)$};
\draw(70,60) node[anchor=base west]{\fontsize{12}{12}\selectfont $\bigoplus$};
\draw(80,60) node[anchor=base west]{\fontsize{12}{12}\selectfont $C^1_H(P,M^e)$};
\draw(105,60) node[anchor=base west]{\fontsize{12}{12}\selectfont $\bigoplus$};
\draw(120,60) node[anchor=base west]{\fontsize{12}{12}\selectfont $C^1_H(P,M^e)$};
\draw(142,60) node[anchor=base west]{\fontsize{12}{12}\selectfont $\bigoplus$};
\draw(150,60) node[anchor=base west]{\fontsize{12}{12}\selectfont $C^2_L(P,M)$};
\draw(50,30) node[anchor=base west]{\fontsize{12}{12}\selectfont $C^3_H(P,M)$};
\draw(70,30) node[anchor=base west]{\fontsize{12}{12}\selectfont $\bigoplus$};
\draw(80,30) node[anchor=base west]{\fontsize{12}{12}\selectfont $C^2_H(P,M^e)$};
\draw(105,30) node[anchor=base west]{\fontsize{12}{12}\selectfont $\bigoplus$};
\draw(120,30) node[anchor=base west]{\fontsize{12}{12}\selectfont $C^2_H(P,M^e)$};
\draw(142,30) node[anchor=base west]{\fontsize{12}{12}\selectfont $\bigoplus$};
\draw(150,30) node[anchor=base west]{\fontsize{12}{12}\selectfont $C^3_L(P,M)$};
\draw(50,0) node[anchor=base west]{\fontsize{12}{12}\selectfont $C^4_H(P,M)$};
\draw(70,0) node[anchor=base west]{\fontsize{12}{12}\selectfont $\bigoplus$};
\draw(80,0) node[anchor=base west]{\fontsize{12}{12}\selectfont $C^3_H(P,M^e)$};
\draw(105,0) node[anchor=base west]{\fontsize{12}{12}\selectfont $\bigoplus$};
\draw(120,0) node[anchor=base west]{\fontsize{12}{12}\selectfont $C^3_H(P,M^e)$};
\draw(142,0) node[anchor=base west]{\fontsize{12}{12}\selectfont $\bigoplus$};
\draw(150,0) node[anchor=base west]{\fontsize{12}{12}\selectfont $C^4_L(P,M)$};
\definecolor{L}{rgb}{0,0,0}
\definecolor{F}{rgb}{0,0,0}
\path[line width=0.30mm, draw=L] (102,108) -- (102,94);
\path[line width=0.30mm, draw=L, fill=F] (102,94) -- (102.7,97) -- (102,94) -- (101.3,97) -- (102,94) -- cycle;
\path[line width=0.30mm, draw=L] (96,88) -- (65,67);
\path[line width=0.30mm, draw=L, fill=F] (65,67) -- (66.4,69) -- (65,67) -- (67.5,67.8) -- (65,67) -- cycle;
\draw(70,78) node[anchor=base west]{\fontsize{12}{12}\selectfont $-\partial^1_H$};
\path[line width=0.30mm, draw=L] (99,88) -- (92,67);
\path[line width=0.30mm, draw=L, fill=F] (92,67) -- (91.8,69.2) -- (92,67) -- (93.5,68.8) -- (92,67) -- cycle;
\draw(92,78) node[anchor=base west]{\fontsize{12}{12}\selectfont 0};
\path[line width=0.30mm, draw=L] (104,88) -- (127,67);
\path[line width=0.30mm, draw=L, fill=F] (127,67) -- (125.6,69.3) -- (127,67) -- (124.6,68.3) -- (127,67) -- cycle;
\draw(116,78) node[anchor=base west]{\fontsize{12}{12}\selectfont 0};
\path[line width=0.30mm, draw=L] (108,88) -- (154,67);
\path[line width=0.30mm, draw=L, fill=F] (154,67) -- (150.5,67.5) -- (154,67) -- (151.8,68.9) -- (154,67) -- cycle;
\draw(132,78) node[anchor=base west]{\fontsize{12}{12}\selectfont $-\partial^1_L$};
\path[line width=0.30mm, draw=L] (60,58) -- (60,35);
\path[line width=0.30mm, draw=L, fill=F] (60,35) -- (60.7,38) -- (60,35) -- (59.3,38) -- (60,35) -- cycle;
\draw(50,45) node[anchor=base west]{\fontsize{12}{12}\selectfont $-\partial^2_H$};
\path[line width=0.30mm, draw=L] (90,58) -- (90,35);
\path[line width=0.30mm, draw=L, fill=F] (90,35) -- (90.7,38) -- (90,35) -- (89.3,38) -- (90,35) -- cycle;
\draw(82,43) node[anchor=base west]{\fontsize{12}{12}\selectfont $\partial_H^{\; e,1}$};
\path[line width=0.30mm, draw=L] (130,58) -- (130,35);
\path[line width=0.30mm, draw=L, fill=F] (130,35) -- (130.7,38) -- (130,35) -- (129.3,38) -- (130,35) -- cycle;
\draw(122,40) node[anchor=base west]{\fontsize{12}{12}\selectfont $\partial_H^{\; e,1}$};
\path[line width=0.30mm, draw=L] (160,58) -- (160,35);
\path[line width=0.30mm, draw=L, fill=F] (160,35) -- (160.7,38) -- (160,35) -- (159.3,38) -- (160,35) -- cycle;
\draw(161,45) node[anchor=base west]{\fontsize{12}{12}\selectfont $-\partial^2_L$};
\path[line width=0.30mm, draw=L] (60,28) -- (60,5);
\path[line width=0.30mm, draw=L, fill=F] (60,5) -- (60.7,8) -- (60,5) -- (59.3,8) -- (60,5) -- cycle;
\draw(50,15) node[anchor=base west]{\fontsize{12}{12}\selectfont $-\partial^3_H$};
\path[line width=0.30mm, draw=L] (90,28) -- (90,5);
\path[line width=0.30mm, draw=L, fill=F] (90,5) -- (90.7,8) -- (90,5) -- (89.3,8) -- (90,5) -- cycle;
\draw(82,13) node[anchor=base west]{\fontsize{12}{12}\selectfont $\partial_H^{\;e,2}$};
\path[line width=0.30mm, draw=L] (130,28) -- (130,5);
\path[line width=0.30mm, draw=L, fill=F] (130,5) -- (130.7,8) -- (130,5) -- (129.3,8) -- (130,5) -- cycle;
\draw(122,12) node[anchor=base west]{\fontsize{12}{12}\selectfont $\partial_H^{\;e,2}$};
\path[line width=0.30mm, draw=L] (160,28) -- (160,5);
\path[line width=0.30mm, draw=L, fill=F] (160,5) -- (160.7,8) -- (160,5) -- (159.3,8) -- (160,5) -- cycle;
\draw(161,15) node[anchor=base west]{\fontsize{12}{12}\selectfont $-\partial^3_L$};
\path[line width=0.30mm, draw=L] (60,-2) -- (60,-22);
\path[line width=0.30mm, draw=L, fill=F] (60,-22) -- (60.7,-19) -- (60,-22) -- (59.3,-19) -- (60,-22) -- cycle;
\path[line width=0.30mm, draw=L] (90,-2) -- (90,-22);
\path[line width=0.30mm, draw=L, fill=F] (90,-22) -- (90.7,-19) -- (90,-22) -- (89.3,-19) -- (90,-22) -- cycle;
\path[line width=0.30mm, draw=L] (130,-2) -- (130,-22);
\path[line width=0.30mm, draw=L, fill=F] (130,-22) -- (130.7,-19) -- (130,-22) -- (129.3,-19) -- (130,-22) -- cycle;
\path[line width=0.30mm, draw=L] (160,-2) -- (160,-22);
\path[line width=0.30mm, draw=L, fill=F] (160,-22) -- (160.7,-19) -- (160,-22) -- (159.3,-19) -- (160,-22) -- cycle;
\draw(59.5,-30) node[anchor=base west]{\fontsize{12}{12}\selectfont $\vdots$};
\draw(89.5,-30) node[anchor=base west]{\fontsize{12}{12}\selectfont $\vdots$};
\draw(129.5,-30) node[anchor=base west]{\fontsize{12}{12}\selectfont $\vdots$};
\draw(159.5,-30) node[anchor=base west]{\fontsize{12}{12}\selectfont $\vdots$};
\path[line width=0.30mm, draw=L] (61,57) -- (88,35);
\path[line width=0.30mm, draw=L, fill=F] (88,35) -- (85.7,35.5) -- (88,35) -- (87,37) -- (88,35) -- cycle;
\draw(67,45) node[anchor=base west]{\fontsize{12}{12}\selectfont $\alpha^2$};
\path[line width=0.30mm, draw=L] (63,57) -- (125,36);
\path[line width=0.30mm, draw=L, fill=F] (125,36) -- (122.8,35.8) -- (125,36) -- (123.5,37.5) -- (125,36) -- cycle;
\draw(76,53) node[anchor=base west]{\fontsize{12}{12}\selectfont $\alpha'^{2}$};
\path[line width=0.30mm, draw=L] (153,58) -- (93,35);
\path[line width=0.30mm, draw=L, fill=F] (93,35) -- (94.8,36.8) -- (93,35) -- (95.5,34.8) -- (93,35) -- cycle;
\draw(134,54) node[anchor=base west]{\fontsize{12}{12}\selectfont $-\beta^2$};
\path[line width=0.30mm, draw=L] (156,57) -- (132,35);
\path[line width=0.30mm, draw=L, fill=F] (132,35) -- (133.5,37.5) -- (132,35) -- (134.6,36.1) -- (132,35) -- cycle;
\draw(143,42) node[anchor=base west]{\fontsize{12}{12}\selectfont $-\beta'^{2}$};
\path[line width=0.30mm, draw=L] (61,28) -- (87,6);
\path[line width=0.30mm, draw=L, fill=F] (87,6) -- (84.6,6.7) -- (87,6) -- (85.8,8.3) -- (87,6) -- cycle;
\draw(66,15) node[anchor=base west]{\fontsize{12}{12}\selectfont $\alpha^3$};
\path[line width=0.30mm, draw=L] (63,28) -- (125,6);
\path[line width=0.30mm, draw=L, fill=F] (125,6) -- (122.1,6.1) -- (125,6) -- (123,7.6) -- (125,6) -- cycle;
\draw(77,24) node[anchor=base west]{\fontsize{12}{12}\selectfont $\alpha'^{3}$};
\path[line width=0.30mm, draw=L] (154,27) -- (94,6);
\path[line width=0.30mm, draw=L, fill=F] (94,6) -- (96,7.8) -- (94,6) -- (97,6.1) -- (94,6) -- cycle;
\draw(135,23) node[anchor=base west]{\fontsize{12}{12}\selectfont $-\beta^3$};
\path[line width=0.30mm, draw=L] (157,27) -- (133,6);
\path[line width=0.30mm, draw=L, fill=F] (133,6) -- (134,8) -- (133,6) -- (135.2,6.6) -- (133,6) -- cycle;
\draw(145,14) node[anchor=base west]{\fontsize{12}{12}\selectfont $-\beta'^{3}$};
\path[line width=0.30mm, draw=L] (154,-2) -- (134,-23);
\path[line width=0.30mm, draw=L, fill=F] (134,-23) -- (134.8,-21) -- (134,-23) -- (136.6,-22) -- (134,-23) -- cycle;
\draw(146,-13) node[anchor=base west]{\fontsize{12}{12}\selectfont $-\beta'^{4}$};
\path[line width=0.30mm, draw=L] (152,-2) -- (95,-22);
\path[line width=0.30mm, draw=L, fill=F] (95,-22) -- (97.8,-20) -- (95,-22) -- (98.5,-21.7) -- (95,-22) -- cycle;
\draw(132,-6) node[anchor=base west]{\fontsize{12}{12}\selectfont $-\beta^4$};
\path[line width=0.30mm, draw=L] (63,-2) -- (127,-21.7);
\path[line width=0.30mm, draw=L, fill=F] (127,-21.7) -- (124.5,-21.9) -- (127,-21.7) -- (124.8,-20.2) -- (127,-21.7) -- cycle;
\draw(76,-6) node[anchor=base west]{\fontsize{12}{12}\selectfont $-\alpha'^{4}$};
\path[line width=0.30mm, draw=L] (61,-2) -- (86,-23);
\path[line width=0.30mm, draw=L, fill=F] (86,-23) -- (84,-22.7) -- (86,-23) -- (85.4,-21.3) -- (86,-23) -- cycle;
\draw(65,-13) node[anchor=base west]{\fontsize{12}{12}\selectfont $\alpha^4$};
\end{tikzpicture}
\end{figure}

The cohomology vector spaces of an $\NP^{lr}$-algebra $P$ with
coefficients in a representation $M$ of $P$ are defined as the
cohomologies of this complex
 and denoted as $H_{\NP^{lr}}^n(P,M), n\geq 0$.

As in the case of $\NP^l$-algebras in \cite{CDNo}, we define
restricted second cohomology  of $\NP^{lr}$-algebras. We have the
natural
 injection $C_H^2(P,M)\oplus C_L^2(P,M)\lra C_{\NP^{lr}}^2(P,M)$ on to the first and the fourth summands; the image of this injection will be denoted again by the
sum $ C_H^2(P,M)\oplus C_L^2(P,M)$. Consider the
restriction
\[ d_{\NP^{lr}}^2=\pa_{\NP^{lr}}^2 \big\vert_{{C_H^2(\;)\oplus C_L^2(\;)}}. \]
We define the 2-dimensional restricted cohomology of the
$\NP^{lr}$-algebra $P$ with coefficients in $M$ by
\[  \bH_{\NP^{lr}}^2(P,M)=\Ker d_{\NP^{lr}}^2/\Imm\pa_{\NP^{lr}}^1 \,. \]
The obvious injection \[\kappa \colon \Ker d_{\NP^{lr}}^2\lra \Ker
\pa_{\NP^{lr}}^2\]
induces the injection of the corresponding
cohomologies
\[{\chi} \colon \bH_{\NP^{lr}}^2(P,M) \lra H_{\NP^{lr}}^2(P,M).\]

$\bH_{\NP^{r}}^2(P,M)$ is defined in analogous way as for $\NP^l$-algebras.

The cohomologies of $\AWB^r$ and $\AWB^{lr}$ are defined by
\begin{align*}H^*_{\AWB^r}(P,M)= &  \ H^*\big(\cone(\alpha'^{*})\big), \\
H^*_{\AWB^{lr}}(P,M)= &  \ H^*\big(\cone(\alpha^*,\alpha'^{*})\big).
\end{align*}
From the definitions we obtain
\begin{lemma}\
\begin{itemize}
  \item[(i)] For $P\in \mathbf{NP}$ we have
\begin{align*}H_{{\NP}}^0(P,M)= &  \  0, \\
H_{{\NP}}^1(P,M) = &  \ \Der_{{\NP}}(P,M)\, .
\end{align*}
  \item[(ii)] For $P\in \mathbf{AWB}$ we have
\begin{align*}
H_{{\AWB}}^0(P,M)= &  \  0, \\
H_{{\AWB}}^1(P,M)= & \  \Der_{{\AWB}}(P,M)\,,  \quad  \mbox{and} \\
H_{{\AWB}}^2(P,M)\cong  &  \ \Ext_{{\AWB}}(P,M) \, .
\end{align*}
\end{itemize}
\end{lemma}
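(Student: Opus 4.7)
The plan is to unpack the claim directly from the shape of the complexes in Section~\ref{S:5}; only the isomorphism $H^2_{\AWB}\cong\Ext_{\AWB}$ requires real work, through the Baer-style correspondence.

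For (i), by construction $C^0_{\NP}(P,M)=0$, giving $H^0_{\NP}(P,M)=0$. In degree one, $C^1_{\NP}(P,M)=\Hom(P,M)$ and $\partial^1_{\NP}$ is the pair (or quadruple, for $\NP^{lr}$) built out of $-\partial_H^1$ and $-\partial_L^1$. A 1-cochain $f$ is therefore a cocycle exactly when $\partial_H^1 f=0$ and $\partial_L^1 f=0$, that is
\[
f(p_1p_2)=p_1f(p_2)+f(p_1)p_2,\qquad f[p_1,p_2]=[p_1,f(p_2)]+[f(p_1),p_2],
\]
the two defining identities of $\Der_{\NP}(P,M)$. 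The absence of 0-cochains forbids coboundaries, so $H^1_{\NP}(P,M)=\Der_{\NP}(P,M)$. For (ii), $H^0_{\AWB}=0$ is by convention, and the same kernel computation inside $\cone(\alpha^*)$, $\cone(\alpha'^{*})$ or $\cone(\alpha^*,\alpha'^{*})$ identifies $H^1_{\AWB}$ with those $f\in\Hom(P,M)$ satisfying $\partial_H^1 f=0$ together with $\alpha^1 f=0$ (and/or $\alpha'^1 f=0$); the explicit formula of Section~\ref{S:5} for $\alpha^1$ rewrites the latter as precisely $f[p_1,p_2]=[p_1,f(p_2)]+[f(p_1),p_2]$, so we recover $\Der_{\AWB}(P,M)$.

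For the last claim, my plan is to implement the standard extension--cocycle correspondence in this bi-operational setting. Given an abelian extension $E\colon 0\to M\to Q\xrightarrow{j} P\to 0$, I fix a $\K$-linear section $s\colon P\to Q$ of $j$ and set
\[
f(p_1,p_2)=s(p_1)s(p_2)-s(p_1p_2),\qquad g(p_1)(p_2)=[s(p_1),s(p_2)]-s[p_1,p_2],
\]
both landing in $M$ by the exactness of $E$. Associativity of $Q$ yields the Hochschild 2-cocycle identity for $f$, while the $\AWB$-axiom~\eqref{cd1} (resp.\ \eqref{cd2}, or both) translates exactly into the cone-compatibility $\alpha^*(f)=\partial_H^{e,1}(g)$ (resp.\ with $\alpha'^{*}$, or with the pair), so $(f,g)$ represents a class in $H^2_{\AWB}(P,M)$. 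A different section modifies $(f,g)$ by a cone coboundary built from the difference of sections, and equivalent extensions induce the same class. Conversely, from a 2-cocycle $(f,g)$ one twists the split abelian $\AWB$-structure on $P\oplus M$, using $f$ in the product and $g$ in the bracket, to obtain an extension; a direct check shows the two procedures are mutually inverse and natural in $P$ and $M$. The main obstacle is verifying that each Poisson-type compatibility between dot and bracket corresponds exactly to the $\alpha^*$- (resp.\ $\alpha'^{*}$-) component of the cone differential; this is where the whole mapping-cone construction was designed to pay off, and once this identification is made the bijection $H^2_{\AWB}(P,M)\cong\Ext_{\AWB}(P,M)$ follows as in the classical associative case.
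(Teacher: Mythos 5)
Your proposal is correct and follows essentially the same route as the paper: the paper's own proof of (i) and of $H^0_{\AWB}=H^1_{\AWB}$ is exactly the unwinding of $C^0=0$ and of the kernel of the degree-one differential that you carry out, and for $H^2_{\AWB}\cong\Ext_{\AWB}$ the paper simply defers to the section/cocycle correspondence of Casas--Pirashvili for $\AWB^l$, which is the same Baer-style argument you sketch (your pair $(f,g)$ with the cone-compatibility $\alpha^2 f=\partial_H^{e,1}g$ encoding the bracket axiom is precisely the mechanism used there). You in fact supply more detail than the paper, which states the result with a two-line reference to the definitions and to \cite{CP}.
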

\begin{proof}(i) The proof follows directly from the fact that
$C^0_{\NP}(P,M)=0$,  from the definition of
$\partial^1_{\NP}$ and the definition of a derivation.

   (ii) Since the zero term in the corresponding cone complex is zero, the first equality follows from the definition of the cohomology.
  The proofs of other two equalities of (ii) for $\AWB^r$ and $\AWB^{lr}$ are similar to the proofs given in \cite{CP} for $\AWB^l$.

\end{proof}
\begin{theorem}\label{h2_ext}
$\bH_{{\NP}}^2(P,M) \cong \Ext_{{\NP}}(P,M)$.
\end{theorem}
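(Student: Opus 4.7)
The plan is to construct mutually inverse maps $\Phi \colon \Ext_{\NP}(P,M) \to \bH_{\NP}^2(P,M)$ and $\Psi \colon \bH_{\NP}^2(P,M) \to \Ext_{\NP}(P,M)$, exactly along the lines used for $\NP^l$-algebras in \cite{CDNo} and $\AWB^l$ in \cite{CP}, but tracking \emph{both} sides of the Poisson identity in the $\NP^{lr}$ case (and only the relevant side for $\NP^l$ or $\NP^r$).

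For $\Phi$, given an abelian extension $E \colon 0 \to M \xrightarrow{i} Q \xrightarrow{j} P \to 0$, I first choose a $\K$-linear section $s \colon P \to Q$ of $j$ and set
\begin{align*}
f_H(p_1,p_2) &= s(p_1)\cdot s(p_2) - s(p_1 \cdot p_2), \\
f_L(p_1,p_2) &= [s(p_1),s(p_2)] - s([p_1,p_2]).
\end{align*}
Since $j$ is an $\NP$-morphism, both quantities lie in $i(M)$, so $(f_H, f_L) \in C_H^2(P,M) \oplus C_L^2(P,M)$. Associativity in $Q$, the Leibniz identity in $Q$, and the Poisson identities (1.1) and / or (1.2) in $Q$ translate, by substituting $s(p_i)$ and moving terms across $i$, to the four cocycle conditions that define $\Ker d_{\NP}^2$: namely $\pa_H^3 f_H = 0$, $\pa_L^3 f_L = 0$, $\alpha^2 f_H = \beta^2 f_L$ (left or left-right case), and $\alpha'^2 f_H = \beta'^2 f_L$ (right or left-right case). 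Replacing $s$ by another section $s' = s + i\circ g$ changes $(f_H, f_L)$ by $\pa_{\NP}^1 g$, so $[(f_H, f_L)] \in \bH_{\NP}^2(P,M)$ depends only on $E$; equivalent extensions give the same class via compatible sections. This defines $\Phi$.

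For $\Psi$, given a class $[(f_H, f_L)]$ in $\bH_{\NP}^2(P,M)$, set $Q_{(f_H,f_L)} := M \oplus P$ as $\K$-module with
\begin{align*}
(m_1, p_1)\cdot(m_2, p_2) &= \bigl(m_1 \cdot p_2 + p_1 \cdot m_2 + f_H(p_1,p_2),\ p_1 \cdot p_2\bigr), \\
\bigl[(m_1, p_1),(m_2, p_2)\bigr] &= \bigl([m_1, p_2] + [p_1, m_2] + f_L(p_1,p_2),\ [p_1,p_2]\bigr).
\end{align*}
The Hochschild 2-cocycle condition on $f_H$ yields associativity, the Leibniz 2-cocycle condition on $f_L$ yields the Leibniz identity, and the mixed conditions $\alpha^2 f_H = \beta^2 f_L$ and / or $\alpha'^2 f_H = \beta'^2 f_L$ yield the left and / or right Poisson identity for $Q_{(f_H,f_L)}$. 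The canonical sequence $0 \to M \to Q_{(f_H,f_L)} \to P \to 0$ is then an abelian extension inducing the prescribed representation. A coboundary $\pa_{\NP}^1 g$ changes $Q_{(f_H,f_L)}$ by the isomorphism $(m,p) \mapsto (m + g(p), p)$, so $\Psi$ is well defined. Finally $\Phi \circ \Psi = \mathrm{id}$ is verified using the canonical section $p \mapsto (0,p)$, and $\Psi \circ \Phi(E) \cong E$ via the $\K$-module splitting of $E$ furnished by $s$.

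The main technical obstacle is the alignment of the mixed cocycle components with the Poisson identities for $Q_{(f_H,f_L)}$. Through the isomorphism $\theta_n \colon C_H^{n+1}(P,M) \cong C_H^n(P,M^e)$, the equality $\alpha^2 f_H = \beta^2 f_L$ in $C_H^2(P,M^e)$ must, upon evaluation on a fourth argument $p_3$, reproduce exactly the left Poisson identity $[p_1 p_2, p_3] = p_1[p_2,p_3] + [p_1,p_3] p_2$ for elements of $Q_{(f_H,f_L)}$; and analogously $\alpha'^2 f_H = \beta'^2 f_L$ must reproduce the right identity. Careful sign bookkeeping in the definitions of $\alpha^n$, $\alpha'^n$, $\beta^n = \theta_n \pa_L^n$ (and $\beta^{2k} = \pa_H^{e,2k-1}\theta_{2k-1}$), together with their primed analogues, is required to see this; for the left case the computation is already in \cite{CDNo}, the right case is the mirror argument using $\alpha'^n, \beta'^n$, and the left-right case is the simultaneous combination of both.
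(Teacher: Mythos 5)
Your proposal is correct and follows essentially the same route as the paper, which for this theorem simply defers to the factor-set argument for $\NP^l$-algebras in \cite{CDNo}: a section of an abelian extension yields a pair $(f_H,f_L)\in C^2_H(P,M)\oplus C^2_L(P,M)$ lying in $\Ker d^2_{\NP}$, and conversely a restricted cocycle defines the semidirect-sum-with-factor-set algebra, with the mixed components $\alpha^2 f_H=\beta^2 f_L$ and $\alpha'^2 f_H=\beta'^2 f_L$ encoding the left and right Poisson identities. (Only a trivial indexing slip: the cocycle conditions on $f_H\in C^2_H$ and $f_L\in C^2_L$ should read $\pa^2_H f_H=0$ and $\pa^2_L f_L=0$ in the paper's convention.)
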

\begin{proof} The proof is similar to the one for $\NP^l$-algebras presented in \cite{CDNo}.
\end{proof}
\begin{corollary} \label{C:free_twozero} \
\begin{itemize}
  \item[(i)] If $P$ is a free $\NP$-algebra, then \[\bH_{{\NP}}^2(P,M)=0 \; \]
 for any representation $M$ of $P$.
  \item[(ii)] If $P$ is a free $\NP^l$  (resp. $\NP^r$-algebra), then
   \[H_{{\NP}^l}^n(P,M)=0  \quad (resp. \; H_{{\NP}^r}^n(P,M)=0) \; ,\]
    for $n\geq 3$ and any representation $M$.
\end{itemize}
\end{corollary}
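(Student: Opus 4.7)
The plan splits naturally along the two parts of the corollary. For~(i) I reduce to the splitting of abelian extensions via Theorem~\ref{h2_ext}, while for~(ii) I use the long exact sequences associated to the mapping cones $\cone(\alpha^*)$, $\cone(-\beta^*)$ and to the defining pushout.

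For part~(i), Theorem~\ref{h2_ext} gives $\bH^2_{\NP}(P,M)\cong \Ext_{\NP}(P,M)$, so it suffices to show that every abelian extension $0\to M\to Q\to P\to 0$ of a free $\NP$-algebra $P$ splits. Let $X$ be the generating set of $P$; surjectivity of $Q\to P$ allows us to pick a set-theoretic lift $X\to Q$, and the universal property of the free object constructed in Section~\ref{S:3} extends this uniquely to a morphism $s\colon P\to Q$ in $\mathbf{NP}$. Composing with the projection $Q\to P$ yields the identity on $X$ and hence, by the same universal property, the identity on $P$; so $s$ is a section, the extension splits, and $\Ext_{\NP}(P,M)=0$. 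The argument is identical for all three variants $\NP^l$, $\NP^r$ and $\NP^{lr}$.

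For part~(ii) I treat the $\NP^l$ case in detail; the $\NP^r$ case is symmetric (and Proposition~\ref{P:free_nplr} is precisely why $\NP^{lr}$ must be excluded from the statement). By Proposition~\ref{free_aL} the underlying associative and Leibniz algebras of $P$ are free, hence $H^n_H(P,N)=0$ for $n\ge 2$ and any $P$-$P$-bimodule $N$ (in particular for $N=M$ and $N=M^e$), and $H^n_L(P,M)=0$ for $n\ge 2$. The long exact sequence associated to the mapping cone of $\alpha^*\colon \bar C^*_H(P,M)\to \bar C^*_H(P,M^e)$,
\[\cdots\to H^{n-1}_H(P,M^e)\to H^n(\cone(\alpha^*))\to H^n_H(P,M)\to H^n_H(P,M^e)\to\cdots,\]
then forces $H^n(\cone(\alpha^*))=0$ for $n\ge 3$, and the analogous sequence for $-\beta^*$ forces $H^n(\cone(-\beta^*))=0$ for $n\ge 3$.

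Finally, the defining pushout yields a short exact sequence of complexes
\[0\to C^{*-1}_H(P,M^e)\xrightarrow{(i_1,-i_2)}\cone(\alpha^*)\oplus\cone(-\beta^*)\to C^*_{\NP^l}(P,M)\to 0,\]
whose associated Mayer--Vietoris long exact sequence reads
\[\cdots\to H^{n-1}_H(P,M^e)\to H^n(\cone(\alpha^*))\oplus H^n(\cone(-\beta^*))\to H^n_{\NP^l}(P,M)\to H^n_H(P,M^e)\to\cdots.\]
For $n\ge 3$ all four surrounding groups vanish by the previous step, so $H^n_{\NP^l}(P,M)=0$, as required. I expect the main technical obstacle to be the bookkeeping needed to verify that the pushout really produces the displayed short exact sequence with the correct sign conventions; once that is settled, the rest of the argument is a formal consequence of Proposition~\ref{free_aL} together with the classical vanishing of Hochschild and Leibniz cohomology of free algebras in degrees $\ge 2$.
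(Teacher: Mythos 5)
Your proof is correct and follows essentially the same route as the paper: part (i) reduces to Theorem \ref{h2_ext} plus the splitting of abelian extensions of a free object, and part (ii) combines Proposition \ref{free_aL} with the vanishing of Hochschild and Leibniz cohomology of free algebras in degrees $\geq 2$ to get exactness of $C^*_{\NP^l}(P,M)$ in degrees $>2$. The paper compresses your mapping-cone and pushout arguments into the phrase ``by diagram chasing,'' but the exact sequences you invoke are precisely the ones it records as $(c_1)$, $(h_1)$, $(g_1)$ and their cohomological versions $(C_{1,2})$, $(H_{1,2})$, $(G_1)$, so there is no genuine difference in method.
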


\begin{proof}(i) Since for a free $\NP$-algebra $P$ every extension $0 \to M \xrightarrow{i}
Q \xrightarrow{j} P \to 0$ splits, the fact follows from Theorem \ref{h2_ext}.

 (ii) From Proposition \ref{free_aL} (resp. Proposition \ref{P:free_npr}) it follows that $V^l_{A}(P)$ \big(resp. $V^r_{A}(P)$\big) is a free associative
algebra and $V^l_{L}(P)$ \big(resp. $V^r_{L}(P)$\big) is a free Leibniz algebra. It is well known
that cohomologies of free associative algebras and free Leibniz
algebras vanish in dimensions $\geq 2$ \cite{LP}. Thus we have
$H_H^n(P,-)=0$ and $H_L^n(P,-)=0$ for $n\geq
2$. From this and from the fact that $\alpha^*$ and $\beta^*$ (resp. $\alpha'^*$ and $\beta'^*$) are
homomorphisms of cochain complexes, by diagram chasing  we obtain that
$C^*_{\NP^l}(P,M)$ \big(resp. $C^*_{\NP^r}(P,M)$\big) is exact in dimensions $>2$ for a
free $\NP^l$-algebra (resp. $\NP^r$-algebra) $P$.
\end{proof}
\begin{lemma}\label{free_AWB}
If $P$ is a free $\AWB^r$, then \[H_{\AWB^r}^n(P,M)=0 \;,\] for
$n\geq 2$ (according to the notation in \cite{CP}, $n\geq 1$) and
any representation $M$ of $P$.
\end{lemma}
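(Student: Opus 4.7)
The plan is to split the statement into two regimes: $n \geq 3$, which I would settle by a diagram chase in the cone complex $\cone(\alpha'^{*})$, and $n = 2$, which the diagram chase does not reach and which I would instead handle via the extension classification.

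By Proposition~\ref{P:free_npr}, since $P$ is free in $\mathbf{AWB}^r$, the underlying associative algebra $T^r_{A}(P)$ is a free associative algebra. Consequently the Hochschild cohomology vanishes in degrees $\geq 2$: $H^n_H(P,N) = 0$ for every $n \geq 2$ and every $P$-bimodule $N$. I will apply this both to $N = M$ and to the bimodule $N = M^e$ defined in Section~\ref{S:5}.

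For $n \geq 3$, I would take a cocycle $(a,b) \in C^n_H(P,M) \oplus C^{n-1}_H(P,M^e)$ of the cone $\cone(\alpha'^{*})$. The Hochschild condition $\partial^n_H(a) = 0$ and the vanishing $H^n_H(P,M) = 0$ produce some $f \in C^{n-1}_H(P,M)$ with $a = \partial^{n-1}_H(f)$. Because $\alpha'^{*}$ is a morphism of complexes, the remaining cocycle equation rewrites as $\partial^{e,n-1}_H\bigl(\alpha'^{n-1}(f) + b\bigr) = 0$, and since $n-1 \geq 2$ gives $H^{n-1}_H(P,M^e) = 0$, there exists $g \in C^{n-2}_H(P,M^e)$ with $\alpha'^{n-1}(f) + b = \partial^{e,n-2}_H(g)$. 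This exhibits $(a,b)$ as a coboundary of $(\pm f, g)$ in $\cone(\alpha'^{*})$, so $H^n_{\AWB^r}(P,M) = 0$. The argument is the direct analogue, one Leibniz column lighter, of the diagram chase used in Corollary~\ref{C:free_twozero}\,(ii).

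The main obstacle is the case $n = 2$. The same diagram chase would require $H^1_H(P,M^e) = 0$, and this need not hold for a free associative algebra. I would therefore appeal to the isomorphism $H^2_{\AWB^r}(P,M) \cong \Ext_{\AWB^r}(P,M)$ provided by the lemma preceding Theorem~\ref{h2_ext}. Any abelian extension $0 \to M \to Q \xrightarrow{j} P \to 0$ in $\mathbf{AWB}^r$ splits whenever $P$ is free: pick arbitrary preimages in $Q$ of the free generators of $P$ and extend to an $\mathbf{AWB}^r$-homomorphism $s \colon P \to Q$ by the universal property of the free object, so that $j \circ s = \operatorname{id}_P$ by uniqueness. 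Hence $\Ext_{\AWB^r}(P,M) = 0$, giving $H^2_{\AWB^r}(P,M) = 0$ and completing the proof.
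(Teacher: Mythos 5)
Your proof is correct, but it is worth noting that the paper does not actually supply an argument for this lemma: it simply declares the proof ``analogous to the proof of this fact for $\AWB^l$ given in \cite{CP}'' and omits it. What you have written is therefore a genuine, self-contained reconstruction, and it hangs together. The two regimes are handled appropriately: for $n\geq 3$ your diagram chase in $\cone(\alpha'^{*})$ is exactly the one-column-lighter version of the paper's own argument in Corollary \ref{C:free_twozero}(ii), and it correctly uses Proposition \ref{P:free_npr} (which does cover the $\AWB^r$ case, giving that $T^r_A(P)$ is free associative) together with the vanishing of Hochschild cohomology of free associative algebras in degrees $\geq 2$, applied to both $M$ and $M^e$. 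You are also right to flag $n=2$ as the genuine obstacle: the long exact sequence of the cone (sequence $(c_2)$ in the paper) only identifies $H^2_{\AWB^r}(P,M)$ with a cokernel of a map out of $H^1_H(P,M^e)$, which does not obviously vanish for a free associative algebra, so the chase stalls there. Routing that case through the isomorphism $H^2_{\AWB^r}(P,M)\cong \Ext_{\AWB^r}(P,M)$ of Lemma 5.1(ii) and splitting every abelian extension of a free object by the universal property is clean and is the same device the paper itself uses in Corollary \ref{C:free_twozero}(i). The only cosmetic gap is the unresolved sign in ``$(\pm f,g)$,'' which is harmless since it only depends on the fixed sign convention in the cone differential. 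In short: your argument buys a complete proof from ingredients already present in this paper, whereas the paper's route requires the reader to import and adapt the argument of \cite{CP}.
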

The proof is analogous to the proof of this fact for $\AWB^l$ given in \cite{CP} and therefore it is omitted.

In \cite{CP} it is proved that if $P$ is $\AWB^l$, then its cohomologies are isomorphic to Quillen cohomologies. In the similar way, applying Lemma \ref{free_AWB} we have
\begin{theorem}
$H_{\AWB^r}^{n+1}(P,M) \approx H_{\mathrm Q}^n(P,M)$.
\end{theorem}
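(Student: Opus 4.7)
The plan is to imitate the corresponding argument given in \cite{CP} for $\AWB^l$, substituting Lemma \ref{free_AWB} for its $\AWB^l$-analogue as the key acyclicity input. The comparison is carried out by a double-complex/spectral-sequence argument based on a free simplicial resolution of $P$.

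First I would pick a simplicial free resolution $\varepsilon\colon F_\bullet \twohead P$ in $\mathbf{AWB}^r$; its existence is guaranteed by the free-object functor constructed in Section \ref{S:3}. By the definition of Quillen cohomology,
\[
H_{\mathrm Q}^n(P,M) \;=\; H^n\bigl(\Der_{\AWB^r}(F_\bullet, M)\bigr).
\]
Form the bicomplex $K^{p,q} := C^{p+1}_{\AWB^r}(F_q, M)$, whose horizontal differential is the one appearing in the cone complex $\cone(\alpha'^{*})$ defining $H^{*}_{\AWB^r}$ and whose vertical differential is the alternating sum of the simplicial face operators of $F_\bullet$. Both spectral sequences of $K^{*,*}$ converge to the cohomology of the total complex, and the strategy is to show that they converge respectively to the two sides of the desired isomorphism.

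Computing horizontal cohomology first, Lemma \ref{free_AWB} forces $H^{p+1}_{\AWB^r}(F_q, M) = 0$ for $p \geq 1$, while in the bottom row the preceding lemma gives $H^1_{\AWB^r}(F_q, M) = \Der_{\AWB^r}(F_q, M)$. Hence the $E_1$-page collapses to the row $p=0$, reading $\Der_{\AWB^r}(F_\bullet, M)$, and this spectral sequence converges to $H_{\mathrm Q}^n(P, M)$. Computing vertical cohomology first, each column is a finite sum of functors of the shape $\Hom(F_\bullet^{\otimes k}, N)$ with $N \in \{M, M^e\}$; using the projectivity of tensor powers of free $\AWB^r$-algebras, these columns are acyclic for $q \geq 1$ and yield $C^{p+1}_{\AWB^r}(P, M)$ in degree zero. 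The $E_1$-page then concentrates in the column $q=0$, so this spectral sequence converges to $H^{n+1}_{\AWB^r}(P, M)$, which furnishes the desired isomorphism.

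The main obstacle I expect is the vertical acyclicity: one must verify that $\Hom(F_\bullet^{\otimes k}, N) \to \Hom(P^{\otimes k}, N)$ is indeed a resolution for $N \in \{M, M^e\}$, taking care that the bimodule $M^e = \Hom(P,M)$ depends on $P$ itself and so needs to be pulled back consistently along $\varepsilon$. Once this bookkeeping is in place, the rest of the argument follows the template of \cite{CP} almost verbatim, and the theorem is established.
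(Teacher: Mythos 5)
Your proposal is correct and follows essentially the same route as the paper, whose own proof simply defers to the argument of \cite{CP} for $\AWB^l$ with Lemma \ref{free_AWB} supplying the acyclicity of $H^{\geq 2}_{\AWB^r}$ on free objects; your bicomplex/two-spectral-sequence reconstruction (one collapsing via Lemma \ref{free_AWB} and $H^1_{\AWB^r}=\Der_{\AWB^r}$ to Quillen cohomology, the other via vertical acyclicity of $\Hom(F_\bullet^{\otimes k},M)$ to $H^{*+1}_{\AWB^r}(P,M)$) is exactly that template, and your observation that the coefficient terms rewrite as $\Hom(F_q^{\otimes(p+1)},M)$ disposes of the only real bookkeeping issue.
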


From the constructions of the cohomology complexes we obtain the following short exact sequences of complexes

\begin{align*}
&0 \longrightarrow \cone(\alpha^*) \longrightarrow  C^*_{\NP^l}(P,M) \longrightarrow  C^*_L(P,M)  \longrightarrow 0 \, , \quad  * \geq 3  \tag{$a_1$}\\
&0 \longrightarrow \cone(\alpha'^*) \longrightarrow  C^*_{\NP^r}(P,M) \longrightarrow  C^*_L(P,M)  \longrightarrow 0 \, ,  \quad  * \geq 3  \tag{$a_2$}\\
&0 \longrightarrow \cone(\alpha^*,\alpha'^*) \longrightarrow  C^*_{\NP^{lr}}(P,M) \longrightarrow  C^*_L(P,M)  \longrightarrow 0 \, , \quad  * \geq 3  \tag{$a$} \\
&0 \longrightarrow \cone(-\beta^{*}) \longrightarrow  C^*_{\NP^l}(P,M) \longrightarrow  C^*_H(P,M)  \longrightarrow 0 \, , \quad  * \geq 3  \tag{$b_1$} \\
&0 \longrightarrow \cone(-\beta'^*) \longrightarrow  C^*_{\NP^r}(P,M) \longrightarrow  C^*_H(P,M)  \longrightarrow 0 \, , \quad  * \geq 3  \tag{$b_2$}\\
&0 \longrightarrow \cone(-\beta^*, -\beta'^*) \longrightarrow  C^*_{\NP^{lr}}(P,M) \longrightarrow  C^*_H(P,M)  \longrightarrow 0 \, , \quad  * \geq 1  \tag{$b$}   \\
&0 \longrightarrow C^{*-1}_H(P,M^e) \longrightarrow   C^{*}_{\AWB^l}(P,M) \longrightarrow  C^*_H(P,M)  \longrightarrow 0  \, , \quad  * \geq 1  \tag{$c_1$} \\
&0 \longrightarrow C^{*-1}_H(P,M^e) \longrightarrow   C^{*}_{\AWB^r}(P,M) \longrightarrow  C^*_H(P,M)  \longrightarrow 0 \, , \quad  * \geq 1 \tag{$c_2$} \\
&0 \longrightarrow C^{*-1}_H(P,M^e) \xrightarrow{ \ i_3 \ }   C^{*}_{\AWB^{lr}}(P,M) \longrightarrow  C^*_{\AWB^l}(P,M)  \longrightarrow 0  \, , \quad  * \geq 1  \tag{$c$}\\
&0 \longrightarrow C^{*-1}_H(P,M^e) \xrightarrow{ \ i_2 \ }   C^{*}_{\AWB^{lr}}(P,M) \longrightarrow  C^*_{\AWB^r}(P,M)  \longrightarrow 0  \, , \quad  * \geq 1 \tag{$c'$}
\end{align*}
\begin{align*}
&0 \to C^{*-1}_H(P,M^e) \to  C^{*}_{\NP^l}(P,M) \to  C^*_H(P,M) \oplus  C^*_L(P,M) \to 0 \, , \  * \geq 3  \tag{$d_1$} \\
&0 \to C^{*-1}_H(P,M^e) \to  C^{*}_{\NP^r}(P,M) \to  C^*_H(P,M) \oplus  C^*_L(P,M) \to 0 \, ,  \ * \geq 3  \tag{$d_2$} \\
&0 \longrightarrow C^{*-1}_H(P,M^e) \xrightarrow{ \ i_3 \ } C^*_{\NP^{lr}}(P,M) \longrightarrow  C^*_{\NP^l}(P,M)  \longrightarrow 0 \, , \quad  * \geq 3  \tag{$d$} \\
&0 \longrightarrow C^{*-1}_H(P,M^e) \xrightarrow{ \ i_2 \ }   C^{*}_{\NP^{lr}}(P,M) \longrightarrow   C^*_{\NP^r}(P,M) \longrightarrow 0 \, , \quad  * \geq 3  \tag{$d'$} \label{d}\\
&0 \to C^{*-1}_H(P,M^e) \oplus C^{*-1}_H(P,M^e) \to  C^*_{\AWB^{lr}}(P,M) \to  C^*_H(P,M)  \to 0 \, , \  * \geq 3  \tag{$e$}
\end{align*}
\begin{align*}
& 0 \to C^{*-1}_H(P,M^e) \oplus C^{*-1}_H(P,M^e)  \to
  C^*_{\NP^{lr}}(P,M) \to C^*_H(P,M)  \oplus C^*_L(P,M) \to 0 \tag{$f$}
\end{align*}
\begin{align*}
&0 \to C^{*-1}_H(P,M^e) \xrightarrow{(i_2 ,-i_3)}   \cone(\alpha^{*})\oplus  \cone(-\beta^{*}) \to
  C^*_{\NP^l}(P,M)  \to  0 \, , \ * \geq 3  \tag{$g_1$}
\end{align*}
\begin{align*}
&0 \to C^{*-1}_H(P,M^e) \xrightarrow{(i_2 ,-i_3)}  \cone(\alpha'^*)\oplus  \cone(-\beta'^*) \to
 C^*_{\NP^r}(P,M)  \to 0 \,  , \ * \geq 3  \tag{$g_2$}
\end{align*}
\begin{align*}
& 0 \to C^{*-1}_H(P,M^e) \oplus C^{*-1}_H(P,M^e) \xrightarrow{((i_2 ,i_3),(-i_4 ,-i_5))}   \cone(\alpha^{*}, \alpha'^*)\oplus \cone(-\beta^{*}, -\beta'^*) \\ & \to C^*_{\NP^{lr}}(P,M) \to 0,  \tag{$g$} \label{g}
\end{align*}
 \begin{align*}
&0 \to C^{*-1}_H(P,M^e) \to  \cone(-\beta^{*}) \to  C^*_L(P,M)  \to 0 \, , \  * \geq 3  \tag{$h_1$} \\
&0 \longrightarrow C^{*-1}_H(P,M^e) \longrightarrow  \cone(-\beta'^*) \longrightarrow  C^*_L(P,M)  \longrightarrow 0 \,  , \quad  * \geq 3  \tag{$h_2$}
\end{align*}
\begin{align*}
& 0 \to  C^{*-1}_H(P,M^e)\oplus C^{*-1}_H(P,M^e) \to
 \cone(-\beta^{*}, -\beta'^*) \to
 C^*_L(P,M) \to  0, \ * \geq 3.     \tag{$h$}
\end{align*}
In these sequences $i_2,i_3,i_4$ and $i_5$ denote the
injections on the corresponding summands, respectively. These exact
sequences are obtained directly from  the constructions of the
cohomology complexes of the corresponding types of algebras.

\begin{theorem} \label{T:ex_seq}
We have the following exact sequences of cohomology vector spaces

\begin{equation}\label{A_1}
\xymatrix{& H^2_{\AWB^l}(P,M) \ar[r] & H^2_{\NP^l}(P,M)  \ar[r] & H^2_{L}(P,M)  \ar`r[d] `[l]
`[llld] `[d] [dll]\\
& H^3_{\AWB^l}(P,M) \ar[r] & H^3_{\NP^l}(P,M)  \ar[r] & H^3_{L}(P,M)  \ar[r] & \cdots } \tag{$A_1$}
\end{equation}
where $P$ is an $\NP^l$-algebra and $M$ a representation of $P$.

 \begin{equation}\label{A_2}
\xymatrix{ & H^2_{\AWB^r}(P,M) \ar[r] & H^2_{\NP^r}(P,M)  \ar[r] & H^2_{L}(P,M)  \ar`r[d] `[l]
`[llld] `[d] [dll]\\
& H^3_{\AWB^r}(P,M) \ar[r] & H^3_{\NP^r}(P,M)  \ar[r] & H^3_{L}(P,M)  \ar[r] & \cdots } \tag{$A_2$}
\end{equation}
where $P$ is an $\NP^r$-algebra and $M$ a representation of $P$.

 \begin{equation}\label{A}
\xymatrix{ & H^2_{\AWB^{lr}}(P,M) \ar[r] & H^2_{\NP^{lr}}(P,M)  \ar[r] & H^2_{L}(P,M)  \ar`r[d] `[l]
`[llld] `[d] [dll]\\
& H^3_{\AWB^{lr}}(P,M) \ar[r] & H^3_{\NP^{lr}}(P,M)  \ar[r] &
H^3_{L}(P,M)  \ar[r] & \cdots } \tag{$A$}
\end{equation}
where $P$ is an $\NP^{lr}$-algebra and $M$ a representation of $P$.

 \begin{equation}\label{B_1}
\xymatrix{ & H^3(\cone(-\beta^{*})) \ar[r] & H^3_{\NP^{l}}(P,M)  \ar[r] & H^3_{H}(P,M)  \ar`r[d] `[l]
`[llld] `[d] [dll]\\
& H^4(\cone(-\beta^{*})) \ar[r] & H^4_{\NP^{l}}(P,M)  \ar[r] & H^4_{H}(P,M)  \ar[r] & \cdots } \tag{$B_1$}
\end{equation}
where $P$ is an $\NP^l$-algebra and $M$ a representation of $P$.

  \begin{equation}\label{B_2}
\xymatrix{ & H^3(\cone(-\beta'^{*})) \ar[r] & H^3_{\NP^{r}}(P,M)  \ar[r] & H^3_{H}(P,M)  \ar`r[d] `[l]
`[llld] `[d] [dll]\\
& H^4(\cone(-\beta'^{*})) \ar[r] & H^4_{\NP^{r}}(P,M)  \ar[r] & H^4_{H}(P,M)  \ar[r] & \cdots } \tag{$B_2$}
\end{equation}
where $P$ is an $\NP^r$-algebra and $M$ a representation of $P$.

  \begin{equation}\label{B}
\xymatrix@C=5mm{ & H^3 (\cone(-\beta^*, -\beta'^*)) \ar[r] & H^3_{\NP^{lr}}(P,M)  \ar[r] & H^3_{H}(P,M)  \ar`r[d] `[l]
`[llld] `[d] [dll]\\
& H^4(\cone(-\beta^*, -\beta'^*)) \ar[r] & H^4_{\NP^{lr}}(P,M)  \ar[r] & H^4_{H}(P,M)  \ar[r] & \cdots } \tag{$B$}
\end{equation}
where $P$ is an $\NP^{lr}$-algebra and $M$ a representation of $P$.

  \begin{equation}\label{C_1,2}
\xymatrix{ & H^2_H(P,M^e) \ar[r] & H^3_{\AWB^{r}}(P,M)  \ar[r] & H^3_{H}(P,M)  \ar`r[d] `[l]
`[llld] `[d] [dll]\\
& H^3_H(P,M^e) \ar[r] & H^4_{\AWB^{r}}(P,M)   \ar[r] & H^4_{H}(P,M)  \ar[r] & \cdots } \tag{$C_{1,2}$}
\end{equation}
where $P$ is an $\AWB^r$ and $M$ a representation of $P$. Analogous
exact sequence we have for $H_{\AWB^{l}}(P,M)$.

 \begin{equation}\label{C}
\xymatrix{ & H^2_H(P,M^e) \ar[r] & H^3_{\AWB^{lr}}(P,M)  \ar[r] & H^3_{\AWB^{r}}(P,M)  \ar`r[d] `[l]
`[llld] `[d] [dll]\\
& H^3_H(P,M^e) \ar[r] & H^4_{\AWB^{lr}}(P,M)   \ar[r] & H^4_{\AWB^{r}}(P,M)  \ar[r] & \cdots } \tag{$C, C'$}
\end{equation}
where $P$ is an $\AWB^{lr}$ and $M$ a representation of $P$.
Analogous exact sequence we have, where $H_{\AWB^{r}}(P,M)$ is
replaced by $H_{\AWB^{l}}(P,M)$.

 \begin{equation}\label{d_1,2}
\xymatrix@C=5mm{ & H^2_H(P,M^e) \ar[r] & H^3_{\NP^{r}}(P,M)  \ar[r] & H^3_{H}(P,M) \oplus H^3_{L}(P,M) \ar`r[d] `[l]
`[llld] `[d] [dll]\\
& H^3_H(P,M^e) \ar[r] & H^4_{\NP^{r}}(P,M)   \ar[r] & H^4_{H}(P,M) \oplus H^4_{L}(P,M)  \ar[r] & \cdots } \tag{$D_{1,2}$}
\end{equation}
where $P$ is an $\NP^r$-algebra and $M$ a representation of $P$.
Analogously for $H_{\NP^{l}}(P,M)$.

\begin{equation}\label{dd'}
\xymatrix{& H^2_H(P,M^e) \ar[r] & H^3_{\NP^{lr}}(P,M)  \ar[r] & H^3_{\NP^{r}}(P,M)  \ar`r[d] `[l]
`[llld] `[d] [dll]\\
& H^3_H(P,M^e) \ar[r] & H^4_{\NP^{lr}}(P,M)   \ar[r] &  H^4_{\NP^{r}}(P,M)  \ar[r] & \cdots } \tag{$D,D'$}
\end{equation}
where $P$ is an $\NP^{lr}$-algebra and $M$ a representation of $P$.
Analogous exact sequence we have, where $H_{\NP^{r}}(P,M)$ is
replaced by $H_{\NP^{l}}(P,M)$.

\begin{equation}\label{e}
\xymatrix@C=5mm{ & H^2_H(P,M^e)\oplus H^2_{H}(P,M^e) \ar[r] & H^3_{\AWB^{lr}}(P,M)  \ar[r] & H^3_H(P,M)  \ar`r[d] `[l]
`[llld] `[d] [dll]\\
& H^3_H(P,M^e)\oplus H^3_{H}(P,M^e) \ar[r] & H^4_{\AWB^{lr}}(P,M)   \ar[r] &  H^4_H(P,M)  \ar[r] & \cdots } \tag{$E$}
\end{equation}
where $P$ is an $\AWB^{lr}$ and $M$ a representation of $P$.

\begin{equation}\label{f}
\xymatrix@C=3.5mm{  & H^2_H(P,M^e)\oplus H^2_{H}(P,M^e) \ar[r] & H^3_{\NP^{lr}}(P,M)  \ar[r] & H^3_H(P,M) \oplus H^3_L(P,M) \ar`r[d] `[l]
`[llld] `[d] [dll]\\
& H^3_H(P,M^e)\oplus H^3_{H}(P,M^e) \ar[r] & H^4_{\NP^{lr}}(P,M)   \ar[r] &  H^4_H(P,M) \oplus H^4_L(P,M)  \ar[r] & \cdots } \tag{$F$}
\end{equation}
where $P$ is an $\NP^{lr}$-algebra and $M$ a representation of $P$.

\begin{equation}\label{g_1}
\xymatrix@C=3.5mm{  & H^2_H(P,M^e) \ar[r] & H^3_{\AWB^{l}}(P,M) \oplus H^3 (\cone(-\beta^*)) \ar[r] & H^3_{\NP^{l}}(P,M) \ar`r[d] `[l]
`[llld] `[d] [dll]\\
& H^3_H(P,M^e) \ar[r] & H^4_{\AWB^{l}}(P,M) \oplus H^4 (\cone(-\beta^*))   \ar[r] &  H^4_{\NP^{l}}(P,M)  \ar[r] & \cdots } \tag{$G_1$}
\end{equation}
where $P$ is an $\NP^l$-algebra and $M$ a representation of $P$.

\begin{equation}\label{g_2}
\xymatrix@C=3.5mm{ & H^2_H(P,M^e) \ar[r] & H^3_{\AWB^{r}}(P,M) \oplus H^3 (\cone(-\beta'^*)) \ar[r] & H^3_{\NP^{r}}(P,M) \ar`r[d] `[l]
`[llld] `[d] [dll]\\
& H^3_H(P,M^e) \ar[r] & H^4_{\AWB^{r}}(P,M) \oplus H^4 (\cone(-\beta'^*))   \ar[r] &  H^4_{\NP^{r}}(P,M)  \ar[r] & \cdots } \tag{$G_2$}
\end{equation}
where $P$ is an $\NP^r$-algebra and $M$ a representation of $P$.
\begin{align*}
 & H^2_H(P,M^e)\oplus H^2_{H}(P,M^e) \to H^3_{\AWB^{lr}}(P,M) \oplus H^3 (\cone(-\beta^*,-\beta'^*)) \to \\
         &  \to H^3_{\NP^{lr}}(P,M) \to  H^3_H(P,M^e) \oplus H^3_{H}(P,M^e) \to \\
         & \to H^4_{\AWB^{lr}}(P,M) \oplus H^4 (\cone(-\beta^*,-\beta'^*)) \to H^4_{\NP^{lr}}(P,M) \to \cdots  \tag{$G$}
\end{align*}
where $P$ is an $\NP^{lr}$-algebra and $M$ a representation of $P$.

\begin{equation}\label{h_1,2}
\xymatrix{ & H^2_H(P,M^e) \ar[r] & H^3 (\cone(-\beta^*))  \ar[r] &  H^3_L(P,M) \ar`r[d] `[l]
`[llld] `[d] [dll]\\
& H^3_H(P,M^e) \ar[r] & H^4 (\cone(-\beta^*))   \ar[r] &   H^4_L(P,M)  \ar[r] & \cdots } \tag{$H_{1,2}$}
\end{equation}
where $P$ is an $\NP^l$-algebra and $M$ a representation of $P$.
Analogous exact sequence we have for the cohomologies of the
$\cone(-\beta'^*)$ and for an $\NP^r$-algebra $P$.
\begin{align*}\label{h}
& H^2_H(P,M^e)\oplus H^2_H(P,M^e) \to H^3 (\cone(-\beta^*,-\beta'^*)) \to H^3_L(P,M)   \tag{$H$} \to \\
&  \to H^3_H(P,M^e)\oplus H^3_H(P,M^e) \to H^4 (\cone(-\beta^*,-\beta'^*))   \to  H^4_L(P,M)  \to \cdots
\end{align*}
for any $\NP^{lr}$-algebra $P$ and a representation $M$ of $P$.
\end{theorem}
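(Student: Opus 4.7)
The plan is conceptually simple: each of the long exact sequences \eqref{A_1}--$(H)$ in Theorem~\ref{T:ex_seq} is merely the long exact sequence in cohomology associated to one of the short exact sequences of cochain complexes $(a_1)$--$(h)$ that are listed immediately before the theorem. So the proof reduces to two tasks: verify that each labeled sequence $(a_1)$--$(h)$ really is a short exact sequence of cochain complexes, and then invoke the standard zig-zag (snake) lemma.

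First I would establish the short exactness of the sequences $(a)$, $(a_i)$, $(d)$, $(d')$, $(d_i)$, $(e)$, $(f)$, and their $\NP^l,\NP^r,\NP^{lr},\AWB^l,\AWB^r,\AWB^{lr}$ counterparts. These are purely bookkeeping: by the very construction of $C^*_{\NP^l}$, $C^*_{\NP^r}$, $C^*_{\NP^{lr}}$, $C^*_{\AWB^l}$, $C^*_{\AWB^r}$, $C^*_{\AWB^{lr}}$ as pushouts (or, equivalently, as degreewise direct sums of the explicitly listed summands), the corresponding degreewise short exact sequence of $\K$-vector spaces is a split one. What must actually be checked is that the inclusions and projections commute with the combined coboundaries $\partial^n_{\NP^l}$, $\partial^n_{\NP^r}$, $\partial^n_{\NP^{lr}}$, $\partial^n_{\AWB^{\bullet}}$; this follows immediately from the formulas $\partial^1_{\NP^{lr}}=(-\partial^1_H,0,0,-\partial^1_L)$ and the description of $\partial^n_{\NP^{lr}}$ as the matrix of entries $\alpha^n,\alpha'^n,\partial_H^{e,n-1},\partial_H^{e,n-1},\beta^n,\beta'^n$ (and analogously in the other cases).

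Next I would treat the sequences involving mapping cones, namely $(b)$, $(b_i)$, $(c)$, $(c')$, $(c_i)$, $(g)$, $(g_i)$, $(h)$, $(h_i)$. Each such sequence is an instance of the classical short exact sequence
\[
0 \to Y^* \to \cone(f^*) \to X^*[1] \to 0
\]
associated to a morphism of complexes $f^* \colon X^* \to Y^*$, applied to $f^*$ being one of $\alpha^*,\alpha'^*,\beta^*,\beta'^*,(\alpha^*,\alpha'^*)$, or $(\beta^*,\beta'^*)$, possibly combined with a further splitting off of the pushout's other cone summand. For sequences $(g)$, $(g_i)$ I would use the general observation that if $C^*_{\NP}$ is the pushout of two cones along a common complex $K^*$, then there is a short exact sequence $0 \to K^* \to \cone\oplus\cone \to C^*_{\NP} \to 0$, coming from the standard pushout identity in an abelian category.

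Once every complex-level sequence is verified, the corresponding long exact sequence in the theorem follows by applying the zig-zag lemma to it; the connecting homomorphisms are the standard ones. The indexing shift in $(C)$, $(D)$, $(E)$, $(F)$, $(C_{1,2})$, $(D_{1,2})$ accounts for the shift $C^{*-1}_H(P,M^e)$ appearing on the left. The main obstacle I anticipate is purely combinatorial: matching summands correctly in the four-component complexes $C^*_{\NP^{lr}}$ and $C^*_{\AWB^{lr}}$ for the sequences $(d)$, $(d')$, $(c)$, $(c')$, $(g)$, and making sure that the connecting morphisms one reads off from the snake lemma agree with those implicit in the statement. No new homological ideas are required beyond those already used in \cite{CP,CDNo}; the argument is a systematic reapplication of the same bookkeeping.
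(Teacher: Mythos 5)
Your proposal is correct and follows essentially the same route as the paper: the authors likewise derive each long exact sequence $(A_1)$--$(H)$ directly from the corresponding short exact sequence of cochain complexes $(a_1)$--$(h)$ via the standard long exact cohomology sequence. Your additional remarks on verifying degreewise exactness, compatibility with the coboundaries, and the pushout-of-cones sequence for $(g)$, $(g_i)$ only spell out the bookkeeping that the paper leaves implicit.
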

\begin{proof}
These exact sequences are obtained directly from the corresponding
short exact sequences of the cohomology complexes.
\end{proof}

Recall that  \emph{Hochschild cohomological dimension}
$\cdim_H P$ of an associative algebra $P$ is defined
as the greatest natural number  $n$, for which there exists a $P$-$P$-bimodule $S$ with $H^n_H(P,S)\neq 0$.
The analogous meaning will have
{\it Leibniz cohomological dimension} of a Leibniz algebra $P$,
{\it $\AWB$ cohomological dimension} of an algebra $P\in \AWB$ and
{\it $\NP$ cohomological dimension} of an $\NP$-algebra $P$, denoted
as $\cdim_L P$, $\cdim_{\AWB} P$ and
$\cdim_{\NP} P$, respectively.

\begin{corollary}\label{C:free_zero}
Let $P$ be a free $\NP^r$-algebra (resp.  $\NP^l$-algebra) and $M$
 be a representation of $P$. Then we have
\begin{itemize}
  \item[(i)]  $H^n_{\AWB^{r}}(P,M)  = 0 \, , n \geq 3$
             (resp. $H^n_{\AWB^{l}}(P,M) = 0 \, , n \geq 3$), where $P$ is the underlying $\AWB^{r}$ (resp. $\AWB^{l}$) of the given algebra $P$;
 \item[(ii)] $H^n\big(\cone(-\beta'^*)\big) = 0$ \,
             \big(resp. $H^n\big(\cone(-\beta^{*})\big) = 0$\big), $n \geq 3$.
 \end{itemize}
\end{corollary}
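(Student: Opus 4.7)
The plan is to deduce both vanishing statements by feeding the freeness information (through Proposition \ref{P:free_npr} and Proposition \ref{free_aL}) into the long exact sequences $(C_{1,2})$ and $(H_{1,2})$ established in Theorem \ref{T:ex_seq}. The key observation is that if $P$ is a free $\NP^r$-algebra (resp.\ free $\NP^l$-algebra), then by Proposition \ref{P:free_npr} (resp.\ Proposition \ref{free_aL}) the underlying associative algebra $V^r_A(P)$ (resp.\ $V^l_A(P)$) is free associative and the underlying Leibniz algebra $V^r_L(P)$ (resp.\ $V^l_L(P)$) is free Leibniz. Consequently, by the classical vanishing of Hochschild cohomology of free associative algebras and of Leibniz cohomology of free Leibniz algebras \cite{LP}, one has
\[
H^n_H(P,N) = 0 \quad \text{and} \quad H^n_L(P,N) = 0 \qquad \text{for all } n \geq 2
\]
and any coefficient bimodule/representation $N$; in particular this applies to $N = M$ and $N = M^e$.

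For part (i), consider the long exact sequence $(C_{1,2})$ coming from the short exact sequence $(c_2)$ of complexes:
\[
\cdots \to H^{n-1}_H(P,M^e) \to H^n_{\AWB^r}(P,M) \to H^n_H(P,M) \to H^n_H(P,M^e) \to \cdots
\]
For $n \geq 3$ we have $n-1 \geq 2$ and $n \geq 2$, so both $H^{n-1}_H(P,M^e)$ and $H^n_H(P,M)$ vanish by the freeness of the underlying associative algebra. Exactness then forces $H^n_{\AWB^r}(P,M) = 0$ for $n \geq 3$. The argument for the $\NP^l$ case is verbatim the same, using the analogous exact sequence arising from $(c_1)$ and Proposition \ref{free_aL}.

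For part (ii), I would apply the same strategy to the sequence $(H_{1,2})$, which in the $\NP^r$ case (its obvious analog, mentioned immediately after the statement of $(H_{1,2})$) reads
\[
\cdots \to H^{n-1}_H(P,M^e) \to H^n\bigl(\cone(-\beta'^{*})\bigr) \to H^n_L(P,M) \to H^n_H(P,M^e) \to \cdots
\]
coming from $(h_2)$. For $n \geq 3$, the outer terms $H^{n-1}_H(P,M^e)$ and $H^n_L(P,M)$ both vanish, the first by freeness of the associative underlying algebra and the second by freeness of the Leibniz underlying algebra; hence $H^n(\cone(-\beta'^{*})) = 0$. The $\NP^l$ version follows identically from the sequence $(H_{1,2})$ itself together with Proposition \ref{free_aL}.

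There is no real technical obstacle here: the entire proof is an exercise in two-out-of-three in a long exact sequence. The only thing to double-check is the bookkeeping of degrees—namely that both the $H^{n-1}_H(P,M^e)$ and the $H^n_H$ (resp.\ $H^n_L$) neighbours of $H^n_{\AWB^r}$ (resp.\ of $H^n(\cone(-\beta'^{*}))$) sit in degree $\geq 2$ when $n \geq 3$, so that the freeness vanishing can be invoked on both sides. Once this is observed, the conclusions in (i) and (ii) are immediate.
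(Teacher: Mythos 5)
Your proof is correct, but it routes through different long exact sequences than the paper does. The paper proves (i) by combining Corollary \ref{C:free_twozero}(ii) (vanishing of $H^n_{\NP^r}(P,M)$ for $n\geq 3$, itself obtained by a diagram chase) with the freeness of the underlying Leibniz algebra and the sequence $(A_2)$ (resp.\ $(A_1)$), and proves (ii) from part (i), Corollary \ref{C:free_twozero}(ii) and the sequence $(G_2)$ (resp.\ $(G_1)$). You instead use $(C_{1,2})$ for (i) and the $\cone(-\beta'^*)$-analogue of $(H_{1,2})$ for (ii), so your argument needs only the freeness of the underlying associative and Leibniz algebras (Propositions \ref{P:free_npr} and \ref{free_aL}) together with the classical vanishing of Hochschild and Leibniz cohomology in degrees $\geq 2$; it never invokes the vanishing of $H^n_{\NP}$ and your (ii) is independent of your (i). This is a genuine simplification of the dependency structure: in particular, for (i) you are in effect redoing the argument of Lemma \ref{free_AWB}, but correctly observing that one does not need the underlying $\AWB^r$ to be free as an $\AWB^r$ --- freeness of the underlying associative algebra suffices to kill the two neighbours $H^{n-1}_H(P,M^e)$ and $H^n_H(P,M)$ in $(C_{1,2})$. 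The degree bookkeeping you flag is right: for $n\geq 3$ every Hochschild or Leibniz term adjacent to the one you want to kill sits in degree $\geq 2$. The only shared hypothesis both routes rely on is the validity of the long exact sequences of Theorem \ref{T:ex_seq} down to the $H^3$ terms (the underlying short exact sequences of complexes such as $(h_2)$ are only stated for $*\geq 3$), so neither proof is more exposed than the other on that point.
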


\begin{proof} (i) Since $P$ is a free $\NP^r$-algebra, by Corollary  \ref{C:free_twozero} (ii) $H_{\NP^r}^n(P,M)=0, \, n\geq 3$.  By Proposition \ref{P:free_npr} the underlying Leibniz algebra is also free, from  which follows the well-known fact that $H^n_L(P,N)=0$, $n\geq 2$, for any representation $N$ of $P$ in the category of Leibniz algebras $\mathbf{Leib}$, i.e., $\cdim_L P\leq 1$. Since $M$ is a representation of $P$ in the category of $\NP^r$-algebras,  it follows that it is a representation of $P$ in $\mathbf{Leib}$ as well, $P$ considered as the underlying Leibniz algebra. Now the result follows from long exact sequence $(A_2)$ in Theorem \ref{T:ex_seq}. Analogously for $P\in \NP^l$, where we apply $(A_1)$.

(ii) We apply again Proposition \ref{P:free_npr} and conclude that the underlying associative algebra of $P$ is free, from  which we have that $\cdim_H P\leq 1$. The result follows from the statement (i) of this Corollary, Corollary \ref{C:free_twozero} (ii) and the exact sequence $(G_2)$. Note that the statement (ii) for $n\geq 4$ follows from $(B_2)$ as well.
Analogously we obtain the equality $H^n(\cone(-\beta^{*})) = 0$, where we apply  exact sequence $(G_1)$ in Theorem \ref{T:ex_seq}.
\end{proof}

\begin{corollary} Let $P$ be an  $\AWB$. If\; $\cdim_H P\leq n$, $n\geq 1$, where $P$ is the underlying associative algebra, then
\[\cdim_{\AWB} P \leq n+1.\]
\end{corollary}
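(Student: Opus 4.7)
The plan is to derive the bound from the long exact sequences $(C_{1,2})$ and $(C, C')$ of Theorem~\ref{T:ex_seq}, which are the natural tools for comparing $\AWB$-cohomology with Hochschild cohomology. The case split will be: first handle $\AWB^l$ and $\AWB^r$ directly from $(C_{1,2})$, then bootstrap to $\AWB^{lr}$ via $(C)$ (or $(C')$).

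First I would observe that for any representation $M$ of $P$ in $\AWB$, the space $M^e = \Hom(P,M)$ is again a $P$\nobreakdash-$P$\nobreakdash-bimodule (this is exactly how it was introduced just before the maps $\theta_n$). Consequently, the hypothesis $\cdim_H P \leq n$ (which quantifies over \emph{all} bimodules) gives simultaneously
\[
H^k_H(P,M) = 0 \quad\text{and}\quad H^k_H(P,M^e) = 0 \qquad \text{for all } k > n.
\]
Now plug this into sequence $(C_{1,2})$, which reads in relevant degree
\[
H^{k-1}_H(P,M^e) \longrightarrow H^k_{\AWB^r}(P,M) \longrightarrow H^k_H(P,M).
\]
For $k \geq n+2$ the left term vanishes (since $k-1 \geq n+1 > n$) and the right term vanishes (since $k > n$), so $H^k_{\AWB^r}(P,M) = 0$. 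The same argument works verbatim for $\AWB^l$ using the analogue of $(C_{1,2})$ mentioned in the theorem. This establishes $\cdim_{\AWB^r} P \leq n+1$ and $\cdim_{\AWB^l} P \leq n+1$.

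Next, to cover $\AWB^{lr}$, I would feed the bound just obtained into sequence $(C)$:
\[
H^{k-1}_H(P,M^e) \longrightarrow H^k_{\AWB^{lr}}(P,M) \longrightarrow H^k_{\AWB^l}(P,M).
\]
For $k \geq n+2$ the left term again vanishes by $\cdim_H P \leq n$, and the right term vanishes by the previous paragraph (applied to $\AWB^l$, noting that any $\AWB^{lr}$ representation $M$ is in particular an $\AWB^l$ representation). Exactness forces $H^k_{\AWB^{lr}}(P,M) = 0$, giving $\cdim_{\AWB^{lr}} P \leq n+1$.

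I do not expect any real obstacle here; the argument is a direct exact-sequence chase and the only point needing care is the double application of the hypothesis to both $M$ and $M^e$, which is justified because $\cdim_H P$ is defined as a supremum over \emph{all} bimodules and $M^e$ is manifestly one. The only mildly delicate point is the bootstrap step for $\AWB^{lr}$: one must remember that the representation-theoretic input in $(C)$ is an $\AWB^{lr}$ representation, but the right-hand term $H^*_{\AWB^l}(P,M)$ still makes sense and vanishes by the $\AWB^l$ case, since an $\AWB^{lr}$ action restricts to an $\AWB^l$ action on $M$.
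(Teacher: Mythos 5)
Your proposal is correct and follows essentially the same route as the paper: the $\AWB^l$ and $\AWB^r$ cases are read off from the long exact sequences $(C_{1,2})$, and the $\AWB^{lr}$ case is then obtained from $(C,C')$ using the one-sided result just established. The paper leaves the vanishing of the flanking terms implicit, whereas you spell out the (correct) observation that $M^e=\Hom(P,M)$ is itself a $P$-$P$-bimodule so that the hypothesis $\cdim_H P\leq n$ applies to it as well; this is exactly the justification the paper's terse proof relies on.
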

\begin{proof} Let $P$ be an  $\AWB^r$ or  an  $\AWB^l$. The results follow from  exact sequences $(C_{1,2})$ in Theorem \ref{T:ex_seq}.
Let $P$ be a left-right $\AWB$. Applying the result for $\AWB^r$  (or $\AWB^l$) for the underlying algebra $P$ as an $\AWB^r$ (resp.
as an $\AWB^l$), the result follows from exact sequences $(C, C')$
 in Theorem \ref{T:ex_seq}.
\end{proof}
\begin{corollary}
Let $P$ be a $\NP^{lr}$-algebra and $\cdim_H P\leq n$, $n\geq 2$. If $M$  is a representation of $P$, then we have:

\begin{itemize}
  \item[(i)] $H^{k+1}_{\NP^{lr}}(P,M) \approx  H^{k+1}\big(\cone(-\beta^*, -\beta'^*)\big),  \, H^{k+1}_{\NP^{l}}(P,M) \approx  H^{k+1}\big(\cone(-\beta^*)\big),$  $H^{k+1}_{\NP^{r}}(P,M) \approx  H^{k+1}\big(\cone(-\beta'^*)\big), \,  k > n$,
  where in the last two isomorphisms $P$ denotes the underlying $\NP^l$ and $\NP^r$-algebras of the given $\NP^{lr}$-algebra $P$, respectively;
  \item[(ii)] $H^{k+1}_{\NP^{lr}}(P,M) \approx   H^{k+1}_{\NP^{r}}(P,M) \approx   H^{k+1}_{\NP^{l}}(P,M) \, , k > n$, where $P$ in the last two right terms denotes the underlying $\NP^l$ and $\NP^r$-algebras of the given algebra $P$, respectively;
  \item[(iii)] $H^{k+1}_{\NP^{lr}}(P,M) \approx   H^{k+1}_L(P,M)  \, , k > n$, where on the right side $P$ denotes the underlying Leibniz algebra of the given algebra $P$.
\end{itemize}
\end{corollary}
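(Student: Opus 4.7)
The plan is to exploit Theorem~\ref{T:ex_seq}: each of the three claims reduces to extracting isomorphisms from an appropriate long exact sequence whose flanking terms vanish in high degree. The only external input is the definition of $\cdim_H$: the assumption $\cdim_H P\leq n$ means $H^m_H(P,N)=0$ for every $P$-$P$-bimodule $N$ and every $m>n$. I will use this both for $N=M$ and for $N=M^e=\Hom(P,M)$, the latter with the bimodule structure recalled in Section~\ref{S:5}.

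For part (i), I apply the long exact sequences $(B)$, $(B_1)$ and $(B_2)$ attached to the short exact sequences $(b)$, $(b_1)$ and $(b_2)$. A typical four-term segment of $(B)$ reads
\begin{equation*}
H^k_H(P,M)\longrightarrow H^{k+1}\bigl(\cone(-\beta^*,-\beta'^*)\bigr)\longrightarrow H^{k+1}_{\NP^{lr}}(P,M)\longrightarrow H^{k+1}_H(P,M),
\end{equation*}
and as soon as $k>n$ both outer terms vanish, forcing the middle arrow to be an isomorphism. The same reasoning inside $(B_1)$ and $(B_2)$ supplies the other two isomorphisms of (i). For part (ii), I turn to the long exact sequences $(D)$ and $(D')$ coming from $(d)$ and $(d')$; their connecting terms are $H^{\ast}_H(P,M^e)$, which vanish for $\ast>n$. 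At every index $k+1$ with $k>n$, both flanking terms $H^k_H(P,M^e)$ and $H^{k+1}_H(P,M^e)$ are zero, so the forgetful maps
\begin{equation*}
H^{k+1}_{\NP^{lr}}(P,M)\longrightarrow H^{k+1}_{\NP^l}(P,M),\qquad H^{k+1}_{\NP^{lr}}(P,M)\longrightarrow H^{k+1}_{\NP^r}(P,M)
\end{equation*}
are both isomorphisms.

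For part (iii), I first invoke the previous corollary on the underlying $\AWB^r$ of $P$: the hypothesis $\cdim_H P\leq n$ delivers $\cdim_{\AWB^r}P\leq n+1$, hence $H^m_{\AWB^r}(P,M)=0$ for every $m>n+1$. Inserting this vanishing into the long exact sequence $(A_2)$, the segment
\begin{equation*}
H^{k+1}_{\AWB^r}(P,M)\longrightarrow H^{k+1}_{\NP^r}(P,M)\longrightarrow H^{k+1}_L(P,M)\longrightarrow H^{k+2}_{\AWB^r}(P,M)
\end{equation*}
has both flanking terms zero for $k>n$ (since then $k+1>n+1$ and a fortiori $k+2>n+1$), so $H^{k+1}_{\NP^r}(P,M)\approx H^{k+1}_L(P,M)$. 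Composing with the isomorphism $H^{k+1}_{\NP^{lr}}(P,M)\approx H^{k+1}_{\NP^r}(P,M)$ from part (ii) finishes (iii). Alternatively, the same argument via $(A_1)$ and the $\NP^l$-version of (ii) gives the result through $H^{k+1}_{\NP^l}(P,M)$.

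The main obstacle is essentially bookkeeping: one has to check that in each of the long exact sequences used, the lower bound $k>n$ (together with $n\geq 2$, which places us in the range where the short exact sequences of Section~\ref{S:5} are valid) simultaneously kills both of the flanking Hochschild or $\AWB^r$ terms, so that the connecting maps are forced to be isomorphisms rather than merely injections or surjections. Once the indices are lined up, the three parts follow in the order given, with each part reusing the output of the previous one.
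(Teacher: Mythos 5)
Your proposal is correct. For parts (i) and (ii) you follow exactly the paper's route: the paper derives (i) from the long exact sequences $(B_1)$, $(B_2)$, $(B)$ and (ii) from $(D,D')$, in both cases by killing the flanking Hochschild terms $H^{*}_H(P,M)$ resp.\ $H^{*}_H(P,M^e)$ for $*>n$, just as you do. For part (iii) you take a genuinely different path: the paper applies the single long exact sequence $(F)$, in which the vanishing of $H^k_H(P,M^e)\oplus H^k_H(P,M^e)$ and $H^{k+1}_H(P,M^e)\oplus H^{k+1}_H(P,M^e)$ gives $H^{k+1}_{\NP^{lr}}(P,M)\approx H^{k+1}_H(P,M)\oplus H^{k+1}_L(P,M)$, and then the first summand dies because $k+1>n$; the paper also records a second route via statement (i) and sequence $(H)$. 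You instead feed the preceding corollary ($\cdim_H P\leq n$ implies $\cdim_{\AWB^r}P\leq n+1$) into $(A_2)$ to get $H^{k+1}_{\NP^r}(P,M)\approx H^{k+1}_L(P,M)$ for $k>n$, and then compose with part (ii). This is a third valid derivation: it is one step longer and depends on the $\AWB$ cohomological dimension bound and on (ii), whereas the paper's $(F)$ argument is self-contained and needs only Hochschild vanishing; on the other hand your route makes the intermediate isomorphism $H^{k+1}_{\NP^r}(P,M)\approx H^{k+1}_L(P,M)$ explicit, which is a small dividend not stated in the corollary itself. All index checks ($k+1>n+1$ and $k+2>n+1$ for $k>n$, and staying in the range $*\geq 3$ where the short exact sequences are defined) go through.
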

\begin{proof}
(i) Follows from exact sequences $(B_1), (B_2)$ and $(B)$ in
Theorem \ref{T:ex_seq}. Analogously, for the proofs of (ii) and
(iii) we apply exact sequences $(D,D')$ and $(F)$, respectively.
Note that (iii) can be obtained as well by application of statement
(i) of this corollary and exact sequence $(H)$.
\end{proof}

The below stated corollaries are proved due to analogous arguments,
therefore the proofs are left to the reader.

\begin{corollary} Let $P$ be an  $\NP$-algebra and $M$ be a representation of $P$. If\; $\cdim_H P\leq n$  and $\cdim_L P\leq n$, $n\geq 2$, where $P$ denotes  the underlying associative and Leibniz algebras, respectively, then we have:
\begin{itemize}
\item[(i)] $\cdim_{\NP} P \leq n+1$;
\item[(ii)] $H^{k+1}\big(\cone(-\beta^*)\big)= H^{k+1}\big(\cone(-\beta'^*)\big)= H^{k+1}\big(\cone(-\beta \;^*, -\beta'^*)\big)=0 \; , \ k > n$.
\end{itemize}
\end{corollary}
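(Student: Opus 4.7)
The plan is to reduce both assertions to the long exact sequences already assembled in Theorem \ref{T:ex_seq}, using the hypothesis that $H^k_H(P,N)=0$ for $k>n$ and every $P$-$P$-bimodule $N$, and $H^k_L(P,N')=0$ for $k>n$ and every Leibniz representation $N'$ of $P$. Since $M$ is an $\NP$-representation, it is simultaneously a bimodule over the underlying associative algebra and a representation of the underlying Leibniz algebra; moreover, $M^{e}=\Hom(P,M)$ with the bimodule structure defined in Section \ref{S:5} is again a $P$-$P$-bimodule, so the vanishing $H^k_H(P,M^{e})=0$ for $k>n$ is also available.

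For part (i), I would first handle the three cases $\NP^l$, $\NP^r$, $\NP^{lr}$ separately by invoking the exact sequences \eqref{A_1}, \eqref{A_2}, \eqref{A}. In each of them the relevant portion reads
\[
H^{k}_{\AWB}(P,M)\to H^{k}_{\NP}(P,M)\to H^{k}_{L}(P,M).
\]
For $k>n+1$ the right-hand term vanishes by $\cdim_L P\leq n$. The left-hand term vanishes by the previous corollary, which asserts $\cdim_{\AWB}P\leq n+1$ whenever $\cdim_H P\leq n$; this corollary applies to each of $\AWB^l$, $\AWB^r$, $\AWB^{lr}$. Exactness then forces $H^{k}_{\NP}(P,M)=0$ for $k>n+1$, which is precisely $\cdim_{\NP}P\leq n+1$.

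For part (ii), the argument is parallel, using the sequences \eqref{h_1,2} and \eqref{h} associated with the cones. Writing the relevant stretch
\[
H^{j-1}_{H}(P,M^{e})\to H^{j}\bigl(\cone(-\beta^{*})\bigr)\to H^{j}_{L}(P,M),
\]
and analogously for $\cone(-\beta'^{*})$ and $\cone(-\beta^{*},-\beta'^{*})$ (where the left-hand term becomes $H^{j-1}_{H}(P,M^{e})\oplus H^{j-1}_{H}(P,M^{e})$), I would take $j=k+1$ with $k>n$, so $j\geq n+2$. Then $j-1\geq n+1>n$ gives $H^{j-1}_{H}(P,M^{e})=0$, while $j>n+1>n$ gives $H^{j}_{L}(P,M)=0$. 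Exactness yields the vanishing of the three cone cohomologies in degree $k+1$ for $k>n$.

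There is no real obstacle here: the only subtlety is book-keeping with the range of indices and making sure the previous corollary $\cdim_{\AWB}P\leq n+1$ is invoked in the right variant ($\AWB^l$, $\AWB^r$ or $\AWB^{lr}$) matching the $\NP$-type of $P$. It is also worth noting, as remarked after the statement, that (ii) can be re-derived from (i) via the sequences \eqref{B_1}, \eqref{B_2}, \eqref{B}, which provides a consistency check on the argument.
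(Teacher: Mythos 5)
Your argument is correct and is precisely what the paper intends: the paper omits this proof, saying only that it follows "due to analogous arguments," i.e., by chasing the long exact sequences of Theorem \ref{T:ex_seq} against the vanishing ranges given by $\cdim_H P\leq n$ and $\cdim_L P\leq n$, which is exactly what you do (part (i) via the sequences of type $(A)$ together with the earlier corollary $\cdim_{\AWB}P\leq n+1$, part (ii) via the sequences of type $(H)$). The index bookkeeping in both parts checks out, and your closing remark that (ii) also follows from (i) via the sequences of type $(B)$ matches the consistency observation the paper makes in the analogous preceding corollary.
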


\begin{corollary} Let $P$ be an $\NP$-algebra and $M$ be a representation of $P$. If\; $\cdim_L P\leq n$, where $P$ is the underlying Leibniz algebra, then we have:
\begin{itemize}

\item[(i)] $H^{k+1}_{\NP}(P,M) \approx  H^{k+1}_{\AWB}(P,M)  \; ,  \ k > n$;

\item[(ii)] $H^{k+1}\big(\cone(-\beta^*)\big) \ \approx \  H^{k+1}\big(\cone( -\beta'^*)\big) \ \approx H^{k+1}\big(\cone(-\beta^*, -\beta'^*)\big)\approx H^k_H(P,M^e), \;   \ k > n$.
\end{itemize}
\end{corollary}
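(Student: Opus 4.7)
The plan is to derive both parts directly from the long exact cohomology sequences collected in Theorem \ref{T:ex_seq}, using the hypothesis $\cdim_L P\leq n$ to force the Leibniz terms $H^j_L(P,M)$ to vanish for $j>n$. No new short exact sequences are needed; everything is already in place from the preceding constructions.

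For (i), I would inspect each of the three long exact sequences $(A_1)$, $(A_2)$, $(A)$, which have the common shape
\[
\cdots \to H^k_L(P,M) \to H^{k+1}_{\AWB}(P,M) \to H^{k+1}_{\NP}(P,M) \to H^{k+1}_L(P,M) \to H^{k+2}_{\AWB}(P,M) \to \cdots
\]
in the left, right, and left-right flavors respectively. For $k>n$ both bracketing terms $H^k_L(P,M)$ and $H^{k+1}_L(P,M)$ vanish by assumption, so the middle arrow is forced to be an isomorphism, yielding $H^{k+1}_{\NP}(P,M)\approx H^{k+1}_{\AWB}(P,M)$ in each of the three settings.

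For (ii), I would apply exactly the same vanishing trick to the long exact sequences coming from the short exact sequences $(h_1)$, $(h_2)$, $(h)$, of which $(H_{1,2})$ and $(H)$ are the explicit instances. In each case, after using the degree shift identification $H^{k+1}(C^{\ast-1}_H(P,M^e))=H^k_H(P,M^e)$, the relevant segment reads
\[
H^k_L(P,M)\ \lra\ H^k_H(P,M^e)^{\oplus s}\ \lra\ H^{k+1}\bigl(\cone(\cdot)\bigr)\ \lra\ H^{k+1}_L(P,M),
\]
with $s=1$ for $\cone(-\beta^*)$ and $\cone(-\beta'^*)$ and $s=2$ for $\cone(-\beta^*,-\beta'^*)$. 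Since the two outer terms vanish for $k>n$, the middle map is an isomorphism and one obtains the stated identifications, which assembled together give the chain of isomorphisms of (ii).

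The main obstacle, modest but worth stating, is only the bookkeeping: tracking the degree shift coming from the shifted complex $C^{\ast-1}_H(P,M^e)$ so that the connecting maps line up with the correct Leibniz terms, and noting that the short exact sequences in the $(h_\bullet)$ family are stated only for $\ast\geq 3$, which is why the conclusion is phrased for $k>n$ with $n\geq 2$ implicit. Once those indices are aligned, the argument is pure diagram chase, completely parallel to the previous corollary, and this is exactly why the authors judged the proof routine enough to leave to the reader.
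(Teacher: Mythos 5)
Your overall strategy is exactly the one the paper intends: the proof is left to the reader with the remark that it follows by ``analogous arguments,'' and those arguments are precisely the vanishing of $H^{j}_L(P,M)$ for $j>n$ fed into the long exact sequences $(A_1)$, $(A_2)$, $(A)$ for part (i) and $(H_{1,2})$, $(H)$ for part (ii). Part (i) and the two single-cone identifications in part (ii) are correctly and completely argued, and your remarks about the degree shift and the range $*\geq 3$ are the right bookkeeping.

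There is, however, a genuine problem in the last step of (ii), and your own bookkeeping exposes it. As you note, the sequence $(H)$ has $s=2$: for $k>n$ the relevant segment reads $0 \to H^k_H(P,M^e)\oplus H^k_H(P,M^e) \to H^{k+1}\bigl(\cone(-\beta^*,-\beta'^*)\bigr) \to 0$, so exactness gives $H^{k+1}\bigl(\cone(-\beta^*,-\beta'^*)\bigr)\approx H^k_H(P,M^e)\oplus H^k_H(P,M^e)$, not a single copy of $H^k_H(P,M^e)$. This cannot be ``assembled together'' with the single-cone isomorphisms into the four-term chain as stated unless $H^k_H(P,M^e)=0$, which is not part of the hypothesis here (contrast the preceding corollary, where $\cdim_H P\leq n$ is also assumed and everything vanishes). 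So either the third link of the chain in the statement must be corrected to a direct sum of two copies, or you owe an argument for why the two copies collapse to one; asserting that the stated chain follows from your computation is a non sequitur. You should flag this discrepancy explicitly rather than paper over it.
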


\section*{Acknowledgments}
The authors are grateful  to referees for the helpful comments and suggestions.

The authors were supported by MICINN, grant MTM 2009-14464-C02 (Spain)
(European FEDER support included), and by Xunta de Galicia, grant Incite 09 207215PR. The second author is grateful to
Santiago de Compostela and Vigo Universities and to the Rustaveli
National Science Foundation  for financial support, grant GNSF/ST09
730 3-105.

\end{document}